\numberwithin{equation}{section}
\newtheorem{Theorem}{Theorem}[section]
\newtheorem{Proposition}[Theorem]{Proposition}
\newtheorem{Corollary}[Theorem]{Corollary}
\newtheorem{Lemma}[Theorem]{Lemma}
\newtheorem{Example}[Theorem]{Example}
\newtheorem{Remark}[Theorem]{Remark}
\newtheorem{Definition}[Theorem]{Definition}
\def\bbZ{\mathbb{Z}}
\def\bbR{\mathbb{R}}
\begin{document}

\title{Iterated and Mixed Weak Norms with Applications to Geometric Inequalities\thanks{This work was partially supported by the
National Natural Science Foundation of China (11525104, 11531013 and 11761131002).}}

\author{Ting Chen \quad and  \quad  Wenchang Sun\\
School of Mathematical Sciences and LPMC,  Nankai University,
      Tianjin~300071, China \\
   Emails: t.chen@nankai.edu.cn, sunwch@nankai.edu.cn}

\date{}
\maketitle

\begin{abstract}
In this paper, we consider a new weak norm, iterated weak norm
in Lebesgue spaces with mixed norms.
We study properties of the mixed weak norm and  the iterated weak norm
and present the relationship between the two   weak norms.
Even for the ordinary Lebesgue spaces, the two  weak norms are not equivalent and any one
of them can not control the other one.
We give some convergence and completeness results for the two weak norms respectively.
We study the convergence in truncated norm, which is a substitution of the convergence in measure for mixed Lebesgue spaces.
And we give a characterization of the convergence in truncated norm.
We show that H\"older's inequality is not always true on mixed weak spaces
and we give a complete characterization of indices which admit   H\"older's inequality.
As applications, we  establish some geometric inequalities related to fractional integrals  in mixed weak spaces and in iterated weak spaces respectively,
which essentially generalize the Hardy-Littlewood-Sobolev inequality.
\end{abstract}

\textbf{Key words}.\,\,
Lebesgue spaces, mixed norms, weak norms, Hardy-Littlewood-Sobolev inequality

Mathematics Subject Classification:  Primary 42B20

\section{Introduction}

For $\vec p =(p_1, \ldots, p_r)$ and a measurable function $f$ defined on $\bbR^{n_1}\times\ldots\times \bbR^{n_r}$,
where $p_i$ are positive numbers and  $n_i$ are positive integers, $1\le i\le r$,
we define the  $L^{\vec p}$ norm of $f$ by
\[
  \|f\|_{L^{\vec p}} :=   \Big\|  \| f\|_{L_{x_1}^{p_1}} \cdots \Big\|_{L_{x_r}^{p_r}}.
\]
The Lebesgue space $L^{\vec p}(\bbR^{n_1}\times\ldots\times \bbR^{n_r})$ with mixed norms consists of all measurable functions $f$
for which $\|f\|_{L^{\vec p}}<\infty$.
For $\vec p =(p_1, p_2)$, we also write $L^{\vec p}$ as $L^{p_2}(L^{p_1})$.

Lebesgue spaces with mixed norms
were first studied by Benedek and Panzone in  \cite{Benedek1961}, where
many fundamental properties were proved. In particular, they showed that
such spaces possess similar  properties as usual Lebesgue spaces.
See also related works by Benedek, Calder\'on  and  Panzone \cite{Benedek1962},
Rubio de Francia, Ruiz  and  Torrea \cite{Rubio1986},
and Fernandez \cite{Fernandez1987}.

Recently, many works have been done for Lebesgue spaces with mixed norms.
For example, Kurtz \cite{Kurtz2007} proved    that some classical operators, which include the strong maximal
function, the double Hilbert transform and singular integral operators, are bounded on weighted
Lebesgue spaces with mixed norms.
Torres  and Ward \cite{Torres2015} gave Calder\'on's reproducing formula and wavelet characterization
of such spaces.
In recent works by
Cleanthous, Georgiadis, and Nielsen~\cite{Cleanthous2017} and
Huang, Liu,  Yang and Yuan \cite{HuangLiuYangYuan2018}
anisotropic mixed-norm
Hardy spaces were also studied.
In this paper, we focus on weak norms.

Recall that the Hardy-Littlewood-Sobolev inequality says that for any $f\in L^{p_1}(\bbR^n)$
and $g\in L^{p_2}(\bbR^n)$, where $1< p_1, p_2< \infty$ with  $1/p_1 +1/p_2   > 1$,
we have
\begin{equation}\label{eq:Hardy-Littlewood-Sobolev}
\int_{\mathbb{R}^{n}}  \int_{\mathbb{R}^{n}}  \frac{ f(x) g(y)}{ |x-y|^{n(2-1/p_1 -1/p_2  )}} dxdy \leq C_{\vec p,n}  \|f\|_{L^{p_1}} \|g\|_{L^{p_2}}.
\end{equation}
See the works by Beckner \cite{Beckner1995,Beckner2008a,Beckner2008b}, Burchard \cite{Burchard1996}, Carlen and Loss \cite{Carlen1990}, Lieb \cite{Lieb1983},  Wu, Shi and Yan~\cite{Wu2014} and
see also Lieb's \cite{Lieb2001} and Stein's \cite{Stein1970} books for the Hardy-Littlewoood-Sobolev inequality and the sharp versions.
The multilinear analogues of the Hardy-Littlewood-Sobolev inequality were studied
by Beckner \cite{Beckner1995}, Gressman \cite{Gressman2011}, and Valdimarsson \cite{Valdimarsson2012}.
Besides, we refer to Christ \cite{Christ1984,Christ1985}, Dury \cite{Drury1988,Drury1985,Drury1987},
and Gressman \cite{Gressman2007}, Tao and Wright \cite{Tao2003}
for some related works regarding the $k$-plane transform and the restriction of the Fourier transform.

Observe that $\|f\|_{L^{p_1}} \|g\|_{L^{p_2}} = \|f\otimes g\|_{L^{\vec p}}$,
where $f\otimes g(x,y) := f(x)g(y)$ and $\vec p = (p_1, p_2 )$.
Define
\[
      L_{\gamma} f(x,y) =  \frac{f(x,y)}{|x-y|^{\gamma} },   \ \gamma>0.
\]
For $\gamma = n(2-1/p_1 -1/p_2  )$, (\ref{eq:Hardy-Littlewood-Sobolev}) says that
\[
    \|L_{\gamma} f\otimes g \|_{L^{\vec 1}} \lesssim \|f\otimes g\|_{L^{\vec p}}, \qquad f\in L^{p_1}(\bbR^n), \,\, g\in L^{p_2}(\bbR^n).
\]
It is natural to ask if the above inequality is still true whenever $f\otimes g$ is replaced by a general function in $L^{\vec p}(\bbR^n\times \bbR^n)$?
More precisely, do we have
\[
   \|L_{\gamma} f  \|_{L^{\vec q}} \lesssim \|f\|_{L^{\vec p}},  \qquad \forall f\in L^{\vec p}(\bbR^n\times \bbR^n)
\]
for appropriate $\vec p, \vec q$ and $\gamma$?

The answer is false in general.
Fortunately, the above inequality is true if the $L^{\vec p}$
and $L^{\vec q}$ norms are replaced with some other mixed norms, respectively. For example,
for appropriate indices, we have
\[
  \|L_{\gamma} f\|_{L^{p_2}(L^{r,\infty})}  \leq C_{\vec p,r,n} \|f\|_{L^{p_2}(L^{p_1,\infty})},
\]
where $\|f\|_{L^{p_2}(L^{p_1,\infty})}:= \| \|f(x,y)\|_{L_x^{p_1,\infty}}\|_{L_y^{p_2} }$.
For a complete characterization of  $L_{\gamma}$ with respect to various choices of indices and mixed norms, we refer to
Theorem~\ref{thm:weak strong norm fractional}.

Next we consider another variant of   (\ref{eq:Hardy-Littlewood-Sobolev}).

By replacing $g$ with $g(-\cdot)$ and a change of variable, we get
\[
  \int_{\mathbb{R}^{n}}  \int_{\mathbb{R}^{n}}  \frac{ f(x) g(y)}{ |x+y|^{n(2-1/p_1 -1/p_2  )}} dxdy \leq C_{\vec p,n}  \|f\|_{L^{p_1}} \|g\|_{L^{p_2}}.
\]
Observe that
\[
  \frac{1}{  (|x+y|+|x-y|)^{\gamma}} \le  \frac{1}{   |x+y|^{\gamma}}
  +  \frac{1}{  |x-y|^{\gamma}}.
\]
This prompts us to consider the following operator
\[
      T_{\gamma} f(x,y) = \frac{f(x,y)}{  (|x+y|+|x-y|)^{\gamma} },  \qquad  \gamma>0.
\]
We see from   (\ref{eq:Hardy-Littlewood-Sobolev}) that for $\gamma = n(2-1/p_1 -1/p_2  )$,
\[
  \|T_{\gamma}f\otimes g\|_{L^{\vec 1}} \lesssim  \| f\otimes g\|_{L^{\vec p}}.
\]
We ask if the following inequality
\[
  \|T_{\gamma} f\|_{L^{\vec q}} \lesssim  \| f \|_{L^{\vec p}},\qquad  \forall f \in L^{\vec p}
\]
is true for some $\vec p$ and $\vec q$?

The answer is again negative.
In fact, since $(|x+y|+|x-y|)^{-\gamma}\not\in L^{\vec r}$ for any $\vec r$ with $0<r_1, r_2\le \infty$,
 the above inequality is false.
Moreover, the following  inequality
\begin{equation}\label{eq: T weak a}
\|T_{\gamma} f\|_{L^{\vec q,\infty}}  \leq C_{\vec p, \vec q, n,\gamma} \|f\|_{L^{\vec p, \infty}}
\end{equation}
or
\begin{equation}\label{eq: T weak b}
\|T_{\gamma} f\|_{L^{\vec q,\infty} }      \leq C_{\vec p, \vec q, n,\gamma} \|f\|_{L^{\vec p} }
\end{equation}
is also false in general, where
\[
\|f\|_{L^{\vec p,\infty}} :=  \sup_{\lambda>0}     \lambda \|  \chi^{}_{ \{ |f|>\lambda \} } \|_{L^{\vec p}}
\]
is the mixed weak $L^{\vec p}$  norm of $f$.

When the mixed weak norm is  replaced by the iterated weak norm defined by
\[
  \|f\|_{L^{(p_2,\infty)}( L^{(p_1,\infty)})} :=   \Big\|  \| f(x,y)\|_{L_{x}^{p_1,\infty}}  \Big\|_{L_{y}^{p_2,\infty}},
\]
we get a positive conclusion. Specifically,
 for all $0<q_1 \leq p_1\leq\infty$ and $0<q_2 \leq p_2\leq \infty$ satisfying the homogeneity condition
$$\frac{1}{q_1}+\frac{1}{q_2}=  \frac{1}{p_1}+\frac{1}{p_2}  +\frac{\gamma}{n},$$
we have
\begin{equation}\label{eq:fractional a}
\|T_{\gamma} f\|_{L^{q_2,\infty}(L^{q_1,\infty})}  \leq C_{\vec p,\vec q, n} \|f\|_{L^{p_2,\infty}(L^{p_1,\infty})}.
\end{equation}

We show in Corollary~\ref{Co:Chen}
 that for $\vec p=(\infty, \infty)$, (\ref{eq:fractional a}) implies the following geometric inequality studied in
\cite{Chen2017},
\begin{equation}\label{eq: sup ineq}
  \|f\|_{q_1}\|g\|_{q_2} \lesssim \sup |f(x)g(y)| \cdot |x-y|^{n/q_1+n/q_2}.
\end{equation}
When $f=g=\chi_{E}$, $E\subset \bbR^n$, (\ref{eq: sup ineq}) becomes
$$|E|^{1/n} \lesssim \sup\limits_{x,y \in E}  |x-y|,$$
which is a well known geometric extremal problem named isodiametric inequality:
amongst all sets with given diameter the ball has the maximal volume.

We show in Section 3 that the inverse version of (\ref{eq:fractional a})
 implies the forward and inverse
Hardy-Littlewood-Sobolev inequalities. Therefore, it is a  generalization of the
Hardy-Littlewood-Sobolev inequalities. Moreover, our approach gives a new method to prove the boundedness
of the fractional integral
\[
   I_{\alpha} f(x) := \int_{\bbR^n} \frac{f(y)}{|x-y|^{n-\alpha}} dy
\]
from $L^{p_1}$ to $L^{q_1}$, where $1/q_1=1/p_1 - \alpha/n$.

Since (\ref{eq:fractional a}) is true and (\ref{eq: T weak a}) is false in general, it is interesting to investigate these two  weak norms.
We show in Section 2 that the two weak norms are not equivalent even if $p_1=p_2$.
Moreover,  one of  them can not control the other one.

The paper is organized as follows. In Section 2 we study
various aspects of the two weak norms. In particular, we give the relationship between the two weak norms.
We give some convergence and completeness results for both weak norms respectively.
We study the convergence in truncated norm, which is a substitution of the convergence in measure for mixed Lebesgue spaces.
We give a characterization of  the convergence in truncated norm.
We prove that H\"older's inequality holds for iterated weak spaces but it might be false in mixed weak spaces.
We give a complete characterization of indices for which H\"older's inequality
is true in mixed weak spaces.
Besides, we give some interpolation properties in their respective spaces.
In Section 3 we establish some geometric inequalities.

Throughout the paper,
$A  \lesssim B$ means that $A \leq C B$,
and the letter $C$ stands for positive constants
that are not necessarily the same at each occurrence but that are independent of the essential variables.
$A \gtrsim B$ and $A\approx B$ are defined similarly.

\section{Weak Norms}

In this section, we study various aspects of weak norms, which include the relationship between the two weak norms,
H\"older's inequality, the interpolation formula, and the convergence of sequences of functions in weak norms.

\subsection{Comparison between two weak norms}

For simplicity, we consider only the case of $\vec p = (p_1, p_2)$.
In this case, the iterated weak norm on $\bbR^n\times \bbR^m$
is
\begin{align*}
  \|f\|_{L^{p_2,\infty}(L^{p_1,\infty})} &=   \Big\|  \| f\|_{L_{x}^{p_1,\infty}}  \Big\|_{L_{y}^{p_2,\infty}} \\
  & = \sup_{\gamma>0} \gamma \left|\left\{y:\, \sup_{\lambda>0}  \lambda | \{x:\, |f(x,y)|>\lambda\}|^{1/p_1} > \gamma  \right\}\right|^{1/p_2}\\
  & = \sup_{\gamma>0} \gamma \left|\left\{y:\, \sup_{\lambda>0}  \lambda | E_{y,\lambda}|^{1/p_1} > \gamma  \right\}\right|^{1/p_2},
\end{align*}
where
\begin{equation}\label{eq:E y}
E_{y,\lambda} := \{x:\, |f(x,y)|> \lambda\}.
\end{equation}
And the mixed weak norm is
\begin{align*}
\|f\|_{L^{\vec p,\infty}} &=  \sup_{\lambda>0}     \lambda \|  \chi^{}_{ \{ |f|>\lambda \} } \|_{L^{\vec p}}
 =  \sup_{\lambda>0}     \lambda  \left( \int |E_{y,\lambda }|^{p_2/p_1} \mathrm{d}y\right)^{1/p_2}.
\end{align*}

It is well known that for $q > 0$,
\begin{equation}\label{eq:weak:1}
\|f\|_{L^{q,\infty}} \le \|f\|_{L^{q}}.
\end{equation}
Hence
\[
  \|f\|_{L^{p_2,\infty}(L^{p_1,\infty})} \le \|f\|_{L^{\vec p}}.
\]

Next we compare the two weak norms.
The following lemma could be known. Since we do not find a proof, we include a proof here.
\begin{Lemma}\label{Lm:weak}
Suppose that $0<  q<\infty$ and $f\in L^q(\bbR^n)$. Then  the equality in (\ref{eq:weak:1}) holds if and only if $|f| = C \chi_E$ for some constant $C$
and $E\subset \bbR^n$.
\end{Lemma}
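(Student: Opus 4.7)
The ``if'' direction is a direct computation: for $|f|=C\chi_E$, both $\|f\|_{L^q}$ and $\|f\|_{L^{q,\infty}}$ equal $C|E|^{1/q}$. For the ``only if'' direction, my plan is to pass to the distribution function of $|f|^q$, setting
\[
g(t) := |\{x : |f(x)|^q > t\}|, \qquad t > 0,
\]
so that $g$ is non-increasing and right-continuous, and the layer-cake formula together with the definition of the weak norm give
\[
\|f\|_{L^q}^q = \int_0^\infty g(t)\,dt, \qquad \|f\|_{L^{q,\infty}}^q = \sup_{t>0}\, tg(t).
\]
Writing $M$ for this common value, I assume $M>0$ (otherwise $f=0$ a.e.\ and we are done).

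The key elementary ingredient is the monotonicity inequality $Tg(T) \le \int_0^T g(s)\,ds$, which follows from $g(s)\ge g(T)$ for $s\le T$. Choose a sequence $T_n$ with $T_n g(T_n) \to M$. The first step is to show that $T_n \to t^* := \inf\{t : g(t) = 0\}$ and that $t^* \in (0,\infty)$. I would rule out $T_n\to\infty$ by the sandwich estimate below, which would give $g\equiv 0$ and contradict $M>0$. I would rule out $T_n \to T' < t^*$ (allowing $t^*=\infty$) by noting that in either direction of convergence, $T'\cdot\lim_{s\uparrow T'}g(s)\ge M$; since $g(s)\ge \lim_{s\uparrow T'}g(s)$ for $s<T'$, this forces $\int_0^{T'} g \ge M = \int_0^\infty g$, so $g \equiv 0$ on $(T',\infty)$, contradicting the definition of $t^*$.

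Once $T_n \to t^* \in (0,\infty)$, the equality case of the key inequality is exploited through a sandwich: for any fixed $s_0 < t^*$ and $n$ so large that $T_n > s_0$, monotonicity of $g$ yields
\[
0 \le s_0\bigl(g(s_0) - g(T_n)\bigr) \le \int_0^{T_n}\bigl(g(s) - g(T_n)\bigr)\,ds = \int_0^{T_n} g\,ds - T_n g(T_n) \longrightarrow 0,
\]
so $g(s_0) = \lim_n g(T_n) = M/t^*$. Hence $g \equiv M/t^*$ on $(0,t^*)$ and $g \equiv 0$ on $[t^*,\infty)$. Translating back via continuity of measure, the sets $\{0<|f|^q<t^*\}$ and $\{|f|^q>t^*\}$ both have measure zero, so $|f|=(t^*)^{1/q}\chi_E$ with $E = \{|f|^q=t^*\}$ and $|E|=M/t^*$.

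The main subtlety I expect is that the supremum defining $\|f\|_{L^{q,\infty}}$ is generically \emph{not} attained in the nontrivial case (even in the extremal case $|f|=C\chi_E$ one has $tg(t)<M$ for every $t$), so one cannot simply invoke an equality-at-a-point argument and must propagate the near-equality through the approximating sequence $T_n$ via the sandwich estimate above.
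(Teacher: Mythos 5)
Your proof is correct, and it follows a genuinely different route from the paper's. The paper argues the contrapositive with a direct pointwise estimate: assuming $|f|$ is not of the form $C\chi_E$, it picks $0<a<b$ with $|\{0<|f|<a\}|>0$ and $|\{|f|>b\}|>0$, and then shows that for every $\lambda$ the quantity $\lambda^q|\{|f|>\lambda\}|$ falls short of $\|f\|_q^q$ by the fixed gap $\min\bigl\{(b^q-a^q)|\{|f|\ge b\}|,\ \|f\chi_{\{0<|f|<a\}}\|_q^q\bigr\}>0$, split across the two regimes $\lambda<a$ and $\lambda\ge a$; this immediately yields strict inequality of norms and even a quantitative deficit. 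You instead pass to the distribution function $g(t)=|\{|f|^q>t\}|$, read both norms as $\int_0^\infty g$ and $\sup_t t\,g(t)$, and use the sandwich $0\le s_0\bigl(g(s_0)-g(T_n)\bigr)\le\int_0^{T_n}g-T_ng(T_n)$ along an extremizing sequence $T_n\to t^*$ to force $g=\frac{M}{t^*}\chi_{(0,t^*)}$. Your approach is more systematic and correctly isolates the real subtlety that the weak-norm supremum is not attained (even for $|f|=C\chi_E$), which is why one must argue along an approximating sequence rather than at a maximizer; the paper's argument sidesteps this entirely by never needing a maximizer. Two small points to tighten in your write-up: state explicitly that $T_n$ cannot accumulate at any $T'>t^*$ (since $g\equiv 0$ on $[t^*,\infty)$, so $T_ng(T_n)\to 0\ne M$ there), which also shows $T_n<t^*$ eventually and makes the sandwich a genuine one-sided limit; and note that $\int_0^{T_n}g\to M$ follows because $g$ vanishes on $[t^*,\infty)$ by definition of $t^*$, so the right-hand side of the sandwich really does tend to $0$.
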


\begin{proof}
Assume that $|f|$ is not of the form $C\chi_E$.
Then there exist positive numbers $a<b$ such that
$|\{0<|f|<a\}|>0$
and $|\{|f|>b\}|>0$.

For $0<\lambda<a$, we have
\begin{align*}
\lambda^q |\{|f|>\lambda \}|
&= \int_{\{|f|>\lambda \}} \lambda^q \\
&\le \int_{\{\lambda < |f| <b\}} |f|^q
      + \int_{\{|f| \ge b\}} (|f|^q - b^q + a^q) \\
&\le \|f\|_q^q - (b^q - a^q)|\{|f|\ge b\}|.
\end{align*}
And for $\lambda \ge a$, we have
\begin{align*}
\lambda^q |\{|f|>\lambda \}|
&= \int_{\{|f|>\lambda \}} \lambda^q
\le \|f\|_q^q - \|f\cdot \chi^{}_{\{0<|f|<a\}}\|_q^q.
\end{align*}
Hence
\[
  \|f\|_{L^{q,\infty}}
  = \sup_{\lambda>0} \lambda |\{|f|>\lambda \}|^{1/q}
  < \|f\|_{L^{q}}.
\]
This completes the proof.
\end{proof}

Next we  illustrate that iterated weak norms are order dependent.
Moreover, the mixed weak norm and the iterated weak norm are not equivalent and any one of them can not control the other one.
Specifically, we have the following.
\begin{Theorem} \label{thm::weak norms}
Suppose that $0<p_1, p_2 < \infty$ and $m$ and $n$ are positive integers. We have

\begin{enumerate}
\item
   $L^{p_2, \infty}(L^{p_1,\infty})(\bbR^n\times \bbR^m)
   \not\subset L^{\vec p, \infty}(\bbR^n\times \bbR^m)$
   and
   $L^{\vec p, \infty}(\bbR^n\times \bbR^m)
    \not\subset
   L^{p_2, \infty}(L^{p_1,\infty})(\bbR^n\times \bbR^m)$.

\item
$L^{p_1,\infty}_x(L^{p_2, \infty}_y)  \not\subset   L^{p_2, \infty}_y(L^{p_1,\infty}_x) $
and $ L^{p_2, \infty}_y(L^{p_1,\infty}_x)  \not\subset  L^{p_1,\infty}_x(L^{p_2, \infty}_y)  $.

\item
      $L^{\vec p}
      \subsetneq L^{\vec p, \infty}
    \bigcap L^{p_2, \infty}(L^{p_1,\infty}) $.

\end{enumerate}

\end{Theorem}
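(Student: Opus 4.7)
The plan is to verify the three claims by explicit counterexamples, all built from a common template: functions of the form $f(x,y)=\sum_k a_k\chi_{A_k\times B_k}$ with pairwise disjoint $A_k\subset \bbR^n$ and pairwise disjoint $B_k\subset \bbR^m$, where the triples $(a_k,|A_k|,|B_k|)$ are tuned geometrically so that the inner distribution function in one variable behaves differently from the doubly-integrated distribution function. For such $f$ one has $\|f(\cdot,y)\|_{L^{p_1,\infty}_x}=a_k|A_k|^{1/p_1}$ when $y\in B_k$, and $|E_{y,\lambda}|=|A_k|\chi^{}_{\{\lambda<a_k\}}$ on $B_k$ (with $E_{y,\lambda}$ as in (\ref{eq:E y})), so both weak norms collapse to transparent suprema over $k$.

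For the half $L^{p_2,\infty}(L^{p_1,\infty})\not\subset L^{\vec p,\infty}$ of (i) I prefer the closed-form $f(x,y)=|x|^{-n/p_1}|y|^{-m/p_2}$: a direct computation gives $\|f(\cdot,y)\|_{L^{p_1,\infty}_x}\approx|y|^{-m/p_2}\in L^{p_2,\infty}_y$, whereas $|E_{y,\lambda}|^{p_2/p_1}\approx\lambda^{-p_2}|y|^{-m}$ and $|y|^{-m}$ has infinite integral over $\bbR^m$. For the reverse half, I apply the template with $a_k=2^k$, $|A_k|=2^{-kp_1}$, $|B_k|=1$: then $\|f(\cdot,y)\|_{L^{p_1,\infty}_x}=1$ on the infinite-measure set $\bigcup_k B_k$, forcing the iterated weak norm to be infinite; while $|A_k|^{p_2/p_1}|B_k|=2^{-kp_2}$, and a geometric-series estimate gives $\sup_\lambda\lambda(\sum_{k:\,2^k>\lambda}2^{-kp_2})^{1/p_2}=O(1)$.

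For (ii) the template is used with $a_k=2^{-k/p_1}$, $|A_k|=2^k$, $|B_k|=1$: then $\|f(\cdot,y)\|_{L^{p_1,\infty}_x}=1$ on the infinite-measure set $\bigcup_k B_k$, so $\|f\|_{L^{p_2,\infty}_y(L^{p_1,\infty}_x)}=\infty$; on the other hand $\|f(x,\cdot)\|_{L^{p_2,\infty}_y}=2^{-k/p_1}$ on $A_k$, and for $\lambda\approx 2^{-j/p_1}$ the $x$-level set has measure $\sum_{k<j}2^k=O(2^j)$, giving $\lambda\cdot(\textrm{measure})^{1/p_1}=O(1)$, so the $L^{p_1,\infty}_x(L^{p_2,\infty}_y)$ norm is finite. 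Swapping the roles of $x,y$ (and of $p_1,p_2$) produces the reverse non-inclusion.

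For (iii), the inclusion $L^{\vec p}\subset L^{\vec p,\infty}\cap L^{p_2,\infty}(L^{p_1,\infty})$ follows by iterating the elementary estimate $\|g\|_{L^{p,\infty}}\le\|g\|_{L^p}$ on the iterated side and by the standard bound $\lambda\|\chi^{}_{\{|f|>\lambda\}}\|_{L^{\vec p}}\le\|f\|_{L^{\vec p}}$ on the mixed side. Strictness is witnessed by $f(x,y)=|x|^{-n/p_1}\chi^{}_{\{|y|<1\}}$, a tensor whose first factor lies in $L^{p_1,\infty}(\bbR^n)\setminus L^{p_1}(\bbR^n)$ and whose second factor is bounded with compact support; both weak norms are therefore finite, but the inner $L^{p_1}_x$ norm is already infinite, forcing $\|f\|_{L^{\vec p}}=\infty$. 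The principal obstacle throughout is calibrating $(a_k,|A_k|,|B_k|)$ in (i) and (ii) so that one weak norm diverges and the other stays finite for the \emph{same} function; once the right geometric parameters are identified, each verification reduces to a short distribution-function computation.
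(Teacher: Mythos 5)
Your proof is correct and follows essentially the same strategy as the paper: build explicit functions for which one weak norm diverges while the other stays finite, the key mechanism in both (i) (second half) and (ii) being to make the inner weak norm $\|f(\cdot,y)\|_{L^{p_1,\infty}_x}$ equal a nonzero constant on a set of $y$'s with infinite measure. The paper realizes this with the continuous function $G(x,y)=a^{|y|^m}\chi^{}_{[0,a^{-p_1|y|^m/n}]}(|x|)$ (so that $\|G(\cdot,y)\|_{L^{p_1,\infty}_x}\equiv v_n^{1/p_1}$), while you use the discrete template $\sum_k a_k\chi^{}_{A_k\times B_k}$ calibrated so that $a_k|A_k|^{1/p_1}\equiv 1$; the two constructions are interchangeable, and the discrete one makes the distribution-function bookkeeping slightly more transparent. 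For (i) first half and for (iii) your functions are essentially those in the paper ($|x|^{-n/p_1}|y|^{-m/p_2}$ and $w(y)/|x|^{n/p_1}$ with $w=\chi^{}_{\{|y|<1\}}$).

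One small point you should make explicit: in the template of (i) and (ii) the index $k$ must run over a set bounded below (say $k\ge 0$). If $k$ ranges over all of $\bbZ$, then in (i) the sum $\sum_{k:\,2^k>\lambda}2^{-kp_2}$ diverges for every $\lambda>0$ (the terms with $k\to-\infty$ blow up), and the mixed weak norm would fail to be finite. With $k\ge 0$ the geometric series is summable and your estimates $\sup_\lambda\lambda\bigl(\sum_{k\ge 0,\,2^k>\lambda}2^{-kp_2}\bigr)^{1/p_2}=O(1)$ in (i) and the analogous bound in (ii) close as claimed.
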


\begin{proof}
(i).\,\, First, we consider the function $F(x,y) =1/( |x|^{n/p_1} |y|^{m/p_2})$.
It is easy to see that
\[
  \|F|_{L^{p_2, \infty}(L^{p_1,\infty})} = v_n^{1/p_1} v_m^{1/p_2},
\]
where $v_n$ and $v_m$ are  the  volumes of  unit balls in $\bbR^n$ and $\bbR^m$, respectively.

On the other hand, for any $\lambda>0$,
\begin{align*}
  \big\|  \chi^{}_{\{F>\lambda\}} \big\|_{L^{\vec p}}
  &= \left(\int_{|y|>0}   \frac{v_n^{p_2/p_1}}{\lambda^{p_2}|y|^{m}} dy \right)^{1/p_2} = \infty.
\end{align*}
Hence  $\|F\|_{L^{\vec p, \infty}} = \infty$. This proves $L^{p_2, \infty}(L^{p_1,\infty})
   \not\subset L^{\vec p, \infty}$.

Next we consider the function $G(x,y) = a^{ |y|^{m}} \chi^{}_{[0, a^{-p_1|y|^{m}/n}]}(|x|)$, where $a>1$ is a constant.
For any  $\lambda>0$, we have
\[
  |\{x:\, G(x,y)>\lambda \}| =
  \begin{cases}
  v_n a^{-p_1|y|^{m}},  & \lambda < a^{ |y|^{m}}, \\
  0,    & \mathrm{others}.
  \end{cases}
\]
Hence
\[
   \| G(\cdot, y)\|_{L^{p_1,\infty}} = v_n^{1/p_1},  \qquad |y|>0.
\]
Therefore,
\[
    \|G\|_{L^{p_2, \infty}(L^{p_1,\infty})} = \infty.
\]

On the other hand,
\begin{align*}
\bigg(\int_{\bbR^m}|\{x:\, G(x,y)>\lambda \}|^{p_2/p_1} dy\bigg)^{1/p_2}
&= \bigg(\int_{a^{|y|^{m}}>\lambda}  v_n^{p_2/p_1} a^{-p_2|y|^{m}} dy \bigg)^{1/p_2}\\
&= \frac{v_n^{1/p_1} v_m^{1/p_2}}{(p_2\ln a)^{1/p_2} \lambda }.
\end{align*}
Hence
\[
  \|G\|_{L^{\vec p, \infty}} =
  \frac{v_n^{1/p_1} v_m^{1/p_2}}{(p_2\ln a)^{1/p_2}  }.
\]
Therefore,
$G\in
L^{\vec p, \infty}
   \setminus
   L^{p_2, \infty}(L^{p_1,\infty})$.

(ii).\,\,
Consider the previous function $G(x,y)$ defined in (i). Fix some  $\lambda>0$ and $x\in\bbR^n$ with $|x|<1$.
We have
\[
  \{y:\, G(x,y)>\lambda \} = \left\{y:\, \frac{\ln \lambda}{\ln a} < |y|^m \le \frac{-n \ln |x|}{p_1\ln a}\right\}.
\]
Hence
\[
  |\{y:\, G(x,y)>\lambda \}| =
  \begin{cases}
  v_m \left( \frac{-n \ln |x|}{p_1\ln a} - \frac{\ln \lambda}{\ln a}  \right)  ,  & \lambda <  \frac{1}{ |x|^{n/p_1}} , \\
  0,    & \mathrm{others}.
  \end{cases}
\]
Therefore,
\[
  \sup_{\lambda>0}  \lambda   |\{y:\, G(x,y)>\lambda \}|^{1/p_2}
  = \begin{cases}
  \left(\frac{v_m}{p_2 e\ln a}\right)^{1/p_2}  \frac{1}{|x|^{n/p_1}},  &|x|<1, \\
  0, & \mathrm{others.}
  \end{cases}
\]
Consequently,
\[
   \|G\|_{L^{p_1,\infty}_x(L^{p_2, \infty}_y)} =
   \frac{v_m^{1/p_2} v_n^{1/p_1}}{(p_2 e\ln a)^{1/p_2}}
      <\infty.
\]
In other words, $G\in L^{p_1,\infty}_x(L^{p_2, \infty}_y) \setminus L^{p_2, \infty}_y(L^{p_1,\infty}_x)$.
Replacing $G(x,y)$ with $G(y,x)$  we get $L^{p_2, \infty}_y(L^{p_1,\infty}_x)\setminus  L^{p_1,\infty}_x(L^{p_2, \infty}_y) \ne \emptyset$.

(iii).\,\,
It is obvious that the left-hand side of (iii)
is a subset of the right-hand side. So we only need to show that it is a proper subset.

Take some $w\in L^{p_2}(\bbR^m)\setminus \{0\}$. Let $F(x,y) = w(y)/|x|^{n/p_1}$.
Then we have $F\not\in L^{\vec p}$.

For any $\lambda>0$, we have
\[
  |\{x:\, |F(x,y)|>\lambda \}| = v_n \frac{|w(y)|^{p_1}}{\lambda^{p_1}}.
\]
Hence
\begin{align*}
\bigg(\int_{\bbR^m}|\{x:\, |F(x,y)|>\lambda \}|^{p_2/p_1} dy\bigg)^{1/p_2}
&=  \frac{v_n^{1/p_1} \|w\|_{p_2}}{\lambda}.
\end{align*}
Therefore, $\|F\|_{L^{\vec p, \infty}} \le v_n^{1/p_1} \|w\|_{p_2}$.
On the other hand, it is easy to see that
 $\|F\|_{L^{p_2, \infty}(L^{p_1,\infty})} = \|1/ |x|^{n/p_1}\|_{L^{p_1,\infty}}\|w\|_{L^{p_2, \infty}}
\le v_n^{1/p_1} \|w\|_{p_2}$.

\end{proof}

It is easy to see that $f\otimes g  \in L^{p_2,\infty}(L^{p_1,\infty})\setminus\{0\}$
if and only if
$f\in L^{p_1,\infty}\setminus\{0\}$
    and
    $g\in L^{p_2,\infty}\setminus\{0\}$.
Next we consider the conditions for $f\otimes g\in L^{\vec p,\infty}$.

\begin{Theorem} \label{thm:weak norms}
Suppose that $0<p_1, p_2 < \infty$ and $m$ and $n$ are positive integers. We have

\begin{enumerate}

\item  If $f\in L^{p_1, \infty}(\bbR^n ) $ and $g\in L^{p_2}( \bbR^m) $, then $f\otimes g\in
L^{\vec p, \infty}(\bbR^n\times \bbR^m) $.

\item If $f\in L^{p_1} $,  $g\in L^{p_2, \infty}$ and $p_1\le p_2$, then $f\otimes g\in
L^{\vec p, \infty}$.


\item
    If $f\otimes g\in  L^{\vec p, \infty}$ and $f, g\ne 0$, then
    $f\in L^{p_1, \infty}$
    and
    $g\in L^{p_2, \infty}$. But $g$ need not to be in $g\in L^{p_2} $.
\end{enumerate}

\end{Theorem}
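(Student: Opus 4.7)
The plan is to work throughout with the distribution functions $d_f(t)=|\{|f|>t\}|$ and $d_g(t)=|\{|g|>t\}|$, since, for the tensor product,
\[
 E_{y,\lambda} = \{x : |f(x)g(y)|>\lambda\} = \{x : |f(x)| > \lambda/|g(y)|\},
\]
so $|E_{y,\lambda}| = d_f(\lambda/|g(y)|)$, and $\|f\otimes g\|_{L^{\vec p,\infty}}^{p_2} = \sup_{\lambda>0}\lambda^{p_2}\int d_f(\lambda/|g(y)|)^{p_2/p_1}\,dy$.

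For (i), I would simply apply the weak-type bound $d_f(t)\le \|f\|_{L^{p_1,\infty}}^{p_1}/t^{p_1}$, which yields $d_f(\lambda/|g(y)|)^{p_2/p_1}\le \|f\|_{L^{p_1,\infty}}^{p_2}|g(y)|^{p_2}/\lambda^{p_2}$. Integrating in $y$ and taking the $1/p_2$ power gives $\|f\otimes g\|_{L^{\vec p,\infty}}\le \|f\|_{L^{p_1,\infty}}\|g\|_{L^{p_2}}$.

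For (ii), the direct approach fails, because one would need to integrate $|g|^{p_2}$ from only a weak bound on $g$. Here is where the hypothesis $p_1\le p_2$ enters: since $p_2/p_1\ge 1$, Minkowski's integral inequality applied to the characteristic function of $\{|fg|>\lambda\}$ yields
\[
 \Bigl(\int |E_{y,\lambda}|^{p_2/p_1}\,dy\Bigr)^{p_1/p_2}
 \le \int \bigl|\{y:|g(y)|>\lambda/|f(x)|\}\bigr|^{p_1/p_2}\,dx
 = \int d_g(\lambda/|f(x)|)^{p_1/p_2}\,dx.
\]
Now I apply the weak bound to $g$: $d_g(\lambda/|f(x)|)^{p_1/p_2}\le \|g\|_{L^{p_2,\infty}}^{p_1}|f(x)|^{p_1}/\lambda^{p_1}$, and integrate $|f|^{p_1}$ directly in the strong norm. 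Taking a $p_2$-th root yields $\|f\otimes g\|_{L^{\vec p,\infty}}\le \|g\|_{L^{p_2,\infty}}\|f\|_{L^{p_1}}$. This Minkowski swap is the main technical step; it is exactly the point where the restriction $p_1\le p_2$ is forced.

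For (iii), I would extract the individual weak norms by restricting the defining integral to a level set of the other factor. Since $g\ne 0$ there is $t_0>0$ with $|\{|g|>t_0\}|>0$; on this set $d_f(\lambda/|g(y)|)\ge d_f(\lambda/t_0)$, so dropping the complement in the $y$-integral gives $\lambda^{p_2}\,d_f(\lambda/t_0)^{p_2/p_1}\,|\{|g|>t_0\}|\le \|f\otimes g\|_{L^{\vec p,\infty}}^{p_2}$, and with the substitution $\mu=\lambda/t_0$ this produces a uniform bound on $\mu\,d_f(\mu)^{1/p_1}$, proving $f\in L^{p_1,\infty}$. Symmetrically, fixing $s_0>0$ with $d_f(s_0)>0$ and restricting to $\{y:|g(y)|>\lambda/s_0\}$ gives $g\in L^{p_2,\infty}$. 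Finally, to see that $g\in L^{p_2}$ cannot be deduced, I would take $f=\chi_B$ for a ball $B$ and $g(y)=|y|^{-m/p_2}\in L^{p_2,\infty}\setminus L^{p_2}$; a direct computation gives $|E_{y,\lambda}|^{p_2/p_1}=|B|^{p_2/p_1}\chi_{\{|g|>\lambda\}}(y)$, so $\|f\otimes g\|_{L^{\vec p,\infty}}\le |B|^{1/p_1}\|g\|_{L^{p_2,\infty}}<\infty$ while $g\notin L^{p_2}$.
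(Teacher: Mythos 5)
Parts (i) and (iii) take essentially the same route as the paper: for (i) the pointwise weak-type bound on $f$, for (iii) the inclusion of the product of level sets into a level set of $f\otimes g$ (your sequential ``fix $t_0$, vary $\mu$'' phrasing and the paper's ``take the sup in $\alpha$ and $\beta$'' are the same argument), and your counterexample $\chi_B\otimes|y|^{-m/p_2}$ is a simpler alternative to the paper's $|x|^{-2n/p_1}\chi_{\{|x|>1\}}\otimes|y|^{-m/p_2}$, with the same effect.

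Part (ii) is where you genuinely diverge, and your route is cleaner. The paper first rescales via $\|F\|_{L^{\vec p,\infty}}=\||F|^{p_1}\|_{L^{(1,p_2/p_1),\infty}}^{1/p_1}$ to reduce to $p_1=1\le p_2$, and then runs a dyadic decomposition of $|f|$ to split $\{|fg|>\lambda\}$ into annuli, giving a constant of $2$ in the reduced case (hence $2^{1/p_1}$ after rescaling). You instead apply Minkowski's integral inequality with exponent $p_2/p_1\ge 1$ directly to the indicator $\chi_{\{|fg|>\lambda\}}$, which swaps the order of integration, reduces to a strong $L^{p_1}$ integral of $|f|^{p_1}$ against a pointwise weak-type bound on $d_g$, and yields the inequality with constant $1$. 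Both proofs expose $p_1\le p_2$ as the essential hypothesis (in the paper: so that the reduced second exponent is $\ge 1$; in yours: so that Minkowski applies), but yours avoids both the scaling reduction and the dyadic summation, and gives the sharp constant. One small care point worth noting for the write-up: the inner integral $\int\chi_{\{|fg|>\lambda\}}(x,y)\,dy$ equals $d_g(\lambda/|f(x)|)$ only with the convention $d_g(\infty)=0$ when $f(x)=0$; the subsequent bound $d_g(t)^{p_1/p_2}\le\|g\|_{L^{p_2,\infty}}^{p_1}/t^{p_1}$ degenerates to $0\le 0$ there, so nothing breaks, but the convention should be stated.
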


\begin{proof}
First, we show that
 $\|F\|_{L^{\vec p, \infty}} = \||F|^{p_1}\|_{L^{(1,p_2/p_1),\infty}}^{1/p_1}$ for any measurable function $F$ defined on $\bbR^n\times\bbR^m$
   and $\|g\|_{L^{p_2, \infty}} = \| |g|^{p_1}\|_{L^{p_2/p_1,\infty}}^{1/p_1}$
   for any measurable function $g$ defined on $\bbR^m$.

In fact, a simple computation shows that
\begin{align}
\|F\|_{L^{\vec p, \infty}}^{p_1}
&=  \sup_{\lambda>0} \lambda^{p_1} \left(\int_{\bbR^m} |\{x:\, |F(x,y)|> \lambda\}|^{p_2/p_1} dy\right)^{p_1/p_2}
    \nonumber \\
&=  \sup_{\lambda>0} \lambda^{p_1} \left(\int_{\bbR^m} |\{x:\, |F(x,y)|^{p_1}> \lambda^{p_1}\}|^{p_2/p_1} dy\right)^{p_1/p_2}
    \nonumber \\
&= \||F|^{p_1}\|_{L^{(1,p_2/p_1),\infty}}. \label{eq:F pq}
\end{align}

Similarly we can prove that $\|g\|_{L^{p_2, \infty}} = \| |g|^{p_1}\|_{L^{p_2/p_1,\infty}}^{1/p_1}$.

(i).
For any $\lambda>0$,
\[
  |\{x:\, f(x)g(y)>\lambda\}|^{1/p_1} \le \frac{\|f\|_{L^{p_1, \infty}} |g(y)|}{\lambda}.
\]
Hence
\[
  \left(\int_{\bbR^m}  |\{x:\, f(x)g(y)>\lambda\}|^{p_2/p_1} dy\right)^{1/p_2} \le
  \frac{\|f\|_{L^{p_1, \infty}} \|g\|_{L^{p_2}}}{\lambda}.
\]
Consequently,
\[
  \|f\otimes g\|_{L^{\vec p, \infty}} \le \|f\|_{L^{p_1, \infty}} \|g\|_{L^{p_2}}.
\]

(ii).\,\, By (\ref{eq:F pq}), it suffices to consider the case of $p_1=1\le p_2$.
For any $\lambda>0$,
since
\[
  \{(x,y)\!: |f(x)g(y)|>\lambda\} \subset \bigcup_{k\in\bbZ} \{(x,y)\!:  2^k \le |f(x)| < 2^{k+1}, |g(y)|>2^{-k-1}\lambda\},
\]
we have
\begin{align*}
&\hskip -2mm \lambda\|\chi^{}_{\{(x,y):\, |f(x)g(y)|>\lambda\}}\|_{L^{(1,p_2)}}  \\
&\le \lambda \sum_{k\in\bbZ}
     \left\|\chi^{}_{\{x: 2^k \le |f(x)| < 2^{k+1}\}}(x) \chi^{}_{\{y: |g(y)|>2^{-k-1}\lambda\}}(y)\right\|_{L^{(1,p_2)}} \\
&= \sum_{k\in\bbZ}
     2^k|\{x: 2^k \le |f(x)| < 2^{k+1}\}| \cdot   2^{-k} \lambda |\{y: |g(y)|>2^{-k-1}\lambda\}|^{1/p_2} \\
&\le 2\|f\|_1 \|g\|_{L^{p_2, \infty}}.
\end{align*}

(iii).\,\, Now we suppose that $f\otimes g \in L^{\vec p, \infty}(\bbR^n\times \bbR^m)$.
For any $\alpha, \beta>0$, we have
\[
   \{x:\, |f(x)|>\alpha\}\times \{y:\, |g(y)|>\beta\} \subset  \{(x,y):\, |f(x)g(y)|> \alpha\beta \}.
\]
Hence
\[
  \alpha
  |\{x\!: |f(x)|>\alpha\}|^{1/p_1} \!\cdot \beta |\{y\!: |g(y)|>\beta\}|^{1/p_2}
  \le \alpha\beta\left\|\chi^{}_{\{(x,y) : |f(x)g(y)|> \alpha\beta \}} \right\|_{L^{\vec p}}.
\]
Taking supremums on both sides, we get
\[
  \|f\|_{L^{p_1, \infty}} \|g\|_{L^{p_2, \infty}} \le \|f\otimes g\|_{L^{\vec p, \infty}}.
\]
Hence  $f\in L^{p_1, \infty}$ and $g\in L^{p_2, \infty}$.

Finally, we show that for
$f\otimes g \in L^{\vec p, \infty}(\bbR^n\times \bbR^m)$, $g$ need not to be in $L^{p_2}$.
Set $f(x)= 1/|x|^{2n /p_1}\chi^{}_{(1,\infty)}(|x|)$
and $g(y) = 1/|y|^{m/p_2}$ for $y\ne 0$.

Fix some $\lambda>0$. We have
\begin{align*}
|\{x:\, |f(x)g(y)| > \lambda\}|
&= \left|\Big\{x:\, |x|>1, |x|^{2n/p_1} < \frac{1}{\lambda |y|^{m/p_2}}\Big \}\right| \\
&= \begin{cases}
v_n \left(   \left(  \frac{1}{\lambda |y|^{m/p_2}}\right)^{p_1/2} - 1\right), & |y| < \frac{1}{\lambda^{p_2/m}}, \\
  0,  &\mathrm{others}.
\end{cases}
\end{align*}
Hence
\begin{align*}
\|f\otimes g\|_{L^{\vec p, \infty}}^{p_2}
&= \sup_{\lambda>0}\lambda^{p_2}  \int_{\bbR^m} |\{x:\, |f(x)g(y)| > \lambda\}|^{p_2/p_1} dy  \\
 &=\sup_{\lambda>0} \lambda^{p_2}  v_n^{p_2/p_1}  \int_{|y| < \frac{1}{\lambda^{p_2/m}}} \left(   \left(  \frac{1}{\lambda |y|^{m/p_2}}\right)^{p_1/2} - 1\right)^{p_2/p_1} dy  \\
 &\le\sup_{\lambda>0} 2 \lambda^{p_2}  v_n^{p_2/p_1}  \int_{|y| < \frac{1}{\lambda^{p_2/m}}} \left(    \frac{1}{\lambda^{p_2/2} |y|^{m/2}}  - 1\right) dy  \\
 &= 2 v_n^{p_2/p_1}v_m < \infty,
\end{align*}
where we use the fact that $(u-1)^{\alpha} \le 2 u^{\alpha} - 1$ for any $u>1$ and $\alpha>0$.
This completes the proof.
\end{proof}

Given $\vec p=(p_1, p_2)$, we compare the three mixed norms $L^{p_2,\infty}(L^{p_1})$, $L^{p_2}(L^{p_1,\infty})$,
and $L^{\vec p, \infty}$.

\begin{Theorem} \label{thm: mixed weak norms}
Suppose that $\vec p=(p_1, p_2)$. We have

\begin{enumerate}
\item For any measurable function $f$ defined on $\bbR^n\times \bbR^m$, we have
    \[
        \|f\|_{L^{\vec p,\infty }} \le \|f \|_{L^{p_2}(L^{p_1,\infty})}.
    \]

\item $L^{p_2,\infty}(L^{p_1}) \not\subset   L^{\vec p, \infty}$
and   $ L^{\vec p, \infty}\not\subset  L^{p_2,\infty}(L^{p_1}) $.
\end{enumerate}
\end{Theorem}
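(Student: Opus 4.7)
The plan is to prove (i) by a direct pointwise-then-integrate argument and to prove (ii) by two explicit counterexamples.

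For (i), the key observation is that the definition of the weak $L^{p_1}$ norm gives, for every $\lambda>0$ and every $y\in\bbR^m$,
\[
\lambda |E_{y,\lambda}|^{1/p_1} \le \|f(\cdot,y)\|_{L^{p_1,\infty}}.
\]
Raising to the $p_2$-th power and integrating in $y$ yields
\[
\lambda^{p_2}\int_{\bbR^m}|E_{y,\lambda}|^{p_2/p_1}\,dy \le \int_{\bbR^m}\|f(\cdot,y)\|_{L^{p_1,\infty}}^{p_2}\,dy = \|f\|_{L^{p_2}(L^{p_1,\infty})}^{p_2}.
\]
Taking the $p_2$-th root and then a supremum over $\lambda>0$ finishes (i).

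For the first noninclusion in (ii), I would take $f=\chi_E$ with $E=\{(x,y):\,|y|<1,\,|x|<|y|^{-mp_1/(np_2)}\}$. Slicing gives $|E_y|=v_n|y|^{-mp_1/p_2}$ for $|y|<1$, so $\|f(\cdot,y)\|_{L^{p_1}}=v_n^{1/p_1}|y|^{-m/p_2}\chi_{\{|y|<1\}}$, a truncated critical weak-$L^{p_2}$ profile and hence in $L^{p_2,\infty}(\bbR^m)$. This shows $f\in L^{p_2,\infty}(L^{p_1})$. However
\[
\|f\|_{L^{\vec p}}^{p_2}=v_n^{p_2/p_1}\int_{|y|<1}|y|^{-m}\,dy=\infty,
\]
and since $\{\chi_E>\lambda\}=E$ for all $0<\lambda<1$ we conclude $\|f\|_{L^{\vec p,\infty}}=\|f\|_{L^{\vec p}}=\infty$.

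For the reverse noninclusion I would use $f(x,y)=|x|^{-n/p_1}\chi_{\{|y|<1\}}(y)$: since $|x|^{-n/p_1}\in L^{p_1,\infty}(\bbR^n)$ and $\chi_{\{|y|<1\}}\in L^{p_2}(\bbR^m)$, Theorem~\ref{thm:weak norms}(i) yields $f\in L^{\vec p,\infty}$, while $\|f(\cdot,y)\|_{L^{p_1}}=+\infty$ on the positive-measure set $\{|y|<1\}$, so $\|f\|_{L^{p_2,\infty}(L^{p_1})}=\infty$. The only step that requires any thought is calibrating the exponent $mp_1/(np_2)$ in the first example, chosen precisely so that $\|f(\cdot,y)\|_{L^{p_1}}$ equals (up to a constant) the critical weak-$L^{p_2}$ function $|y|^{-m/p_2}$ while the corresponding integrand $|y|^{-m}$ in the mixed $L^{\vec p}$ norm sits exactly on the divergence boundary.
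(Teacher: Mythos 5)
Your proof is correct and follows essentially the same route as the paper: part (i) is the paper's ``supremum of norms is at most norm of the supremum'' estimate, written out by integrating the pointwise bound $\lambda|E_{y,\lambda}|^{1/p_1}\le\|f(\cdot,y)\|_{L^{p_1,\infty}}$; and the two counterexamples in (ii) match the paper's up to a harmless truncation in $y$ (the paper uses $E=\{|x|^n\le|y|^{-mp_1/p_2}\}$ untruncated) and up to specializing the paper's abstract choice $f\in L^{p_1,\infty}\setminus L^{p_1}$, $g\in L^{p_2}\setminus\{0\}$ to the concrete pair $|x|^{-n/p_1}$, $\chi_{\{|y|<1\}}$.
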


\begin{proof}
(i).\,\,
Let $E_{y,\lambda}$ be defined by (\ref{eq:E y}). Then we have
\begin{align*}
\|f\|_{L^{\vec p,\infty }}
&= \sup_{\lambda>0} \left\| |E_{y,\lambda}|^{1/p_1}\right\|_{L_y^{p_2}}
\le  \left\| \sup_{\lambda>0} |E_{y,\lambda}|^{1/p_1}\right\|_{L_y^{p_2}}
=\|f\|_{L^{p_2}(L^{p_1,\infty})}.
\end{align*}

(ii). Take some $f\in L^{p_1,\infty}\setminus L^{p_1}$ and $g\in L^{p_2}\setminus \{0\}$.
We see from Theorem~\ref{thm:weak norms}(i) that $f\otimes g \in L^{\vec p, \infty}\setminus L^{p_2,\infty}(L^{p_1})$.

On the other hand, set $F = \chi^{}_{E}$, where $E = \{(x,y):\, |x|^n \le |y|^{-mp_1/p_2}\}$. We have
\begin{align*}
 \|F\|_{L^{p_2,\infty}(L^{p_1})} \approx  \|  \frac{1}{|y|^{m/p_2}}\|_{L^{p_2,\infty}} < \infty
\end{align*}
and
\begin{align*}
 \|F\|_{L^{\vec p,\infty}}
= \|\chi^{}_E \|_{L^{\vec p}} \approx  \|  \frac{1}{|y|^{m/p_2}}\|_{L^{p_2}} = \infty.
\end{align*}
Hence $F\in L^{p_2,\infty}(L^{p_1}) \setminus L^{\vec p,\infty}$.
\end{proof}

\subsection{Convergence in weak norms}

In this subsection, we prove the completeness of both weak spaces.

First, we show that the Lebesgue monotone convergence theorem and Fatou's Lemma are true for weak norms. However, Lebesgue's dominated convergence theorem
fails for weak norms.

\begin{Theorem} \label{thm: monotone convergence theorem}
Let $W$ be either $L^{\vec p,\infty}$
or $L^{p_2,\infty}(L^{p_1,\infty})$, where $\vec p=(p_1, p_2)$ with $0<p_1,p_2\le \infty$.
Suppose that $\{f_k:\, k\ge 1\}$ is  a sequence of non-negative measurable functions such that
$f_k(x,y)$ $\le f_{k+1}(x,y)$, a.e., $k\ge 1$. Then we have
\begin{align*}
  \left\| \lim_{k\rightarrow\infty} f_k\right\|_W
    &= \lim_{k\rightarrow\infty} \left\|  f_k\right\|_W.
\end{align*}
\end{Theorem}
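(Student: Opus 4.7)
The plan is to reduce both cases to the elementary observation that if $(a_k(\lambda))_{k\ge 1}$ is, for each $\lambda>0$, a nondecreasing sequence, then $\lim_k\sup_\lambda a_k(\lambda) = \sup_\lambda \lim_k a_k(\lambda)$ (both sides equal $\sup_{k,\lambda} a_k(\lambda)$). Monotonicity of each quantity involved in the norms gives $\|f_k\|_W\le \|f\|_W$ automatically, and also makes the limit $\lim_k\|f_k\|_W$ exist in $[0,\infty]$, so the content is the reverse inequality $\|f\|_W\le \lim_k\|f_k\|_W$, which is exactly the direction supplied by the interchange.

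First I would treat the mixed weak norm. Writing $E^k_{y,\lambda}:=\{x:\,|f_k(x,y)|>\lambda\}$ and $E_{y,\lambda}$ as in \eqref{eq:E y}, monotonicity of $(f_k)$ gives $E^k_{y,\lambda}\nearrow E_{y,\lambda}$, whence $|E^k_{y,\lambda}|\nearrow|E_{y,\lambda}|$ by the classical measure-theoretic monotone convergence. Raising to the $1/p_1$ power preserves the monotone increase, and then the ordinary Lebesgue monotone convergence theorem applied in the $y$-variable yields
\[
  \bigl\||E^k_{y,\lambda}|^{1/p_1}\bigr\|_{L_y^{p_2}}
  \;\nearrow\;
  \bigl\||E_{y,\lambda}|^{1/p_1}\bigr\|_{L_y^{p_2}}
\]
for each fixed $\lambda>0$. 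Taking $\sup_{\lambda>0}$ (multiplied by $\lambda$) on both sides and applying the interchange lemma above gives $\|f_k\|_{L^{\vec p,\infty}}\nearrow \|f\|_{L^{\vec p,\infty}}$. The cases $p_1=\infty$ or $p_2=\infty$ are handled by the convention that $L^{\infty,\infty}=L^\infty$ and the well-known monotone convergence for the essential supremum.

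For the iterated weak norm it is cleaner to first record the one-variable statement: if $g_k\nearrow g$ a.e., then $\|g_k\|_{L^{q,\infty}}\nearrow\|g\|_{L^{q,\infty}}$. This is a direct consequence of $|\{|g_k|>\lambda\}|\nearrow|\{|g|>\lambda\}|$ and the same interchange argument. Applying this first in $x$ with $y$ frozen gives the pointwise monotone convergence
\[
  h_k(y):=\|f_k(\cdot,y)\|_{L_x^{p_1,\infty}}
  \;\nearrow\;
  \|f(\cdot,y)\|_{L_x^{p_1,\infty}}=:h(y),
\]
and applying it a second time in $y$ gives $\|h_k\|_{L_y^{p_2,\infty}}\nearrow\|h\|_{L_y^{p_2,\infty}}$, which is exactly the claim for $W=L^{p_2,\infty}(L^{p_1,\infty})$.

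There is no real obstacle beyond keeping the monotonicities aligned; the only point to be careful about is that passing from $|E^k_{y,\lambda}|\to|E_{y,\lambda}|$ to the $L_y^{p_2}$-norm really does require (ordinary) monotone convergence rather than dominated convergence, because no integrable majorant need exist -- which is in fact the same reason the dominated convergence theorem fails for weak norms, as the authors note just before the statement.
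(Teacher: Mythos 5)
Your proof is correct and, while it establishes the same reverse inequality $\|f\|_W\le\lim_k\|f_k\|_W$ (the other direction being trivial), the method is genuinely different from the paper's. The paper handles the mixed weak case by an $\varepsilon$-argument: it fixes $\lambda_0$ that nearly realizes $\|f\|_{L^{\vec p,\infty}}$, applies the ordinary monotone convergence theorem to $\|\chi_{\{|f_k|>\lambda_0\}}\|_{L^{\vec p}}$, obtains $\lim_k\|f_k\|_{L^{\vec p,\infty}}\ge(1-\varepsilon)^2\|f\|_{L^{\vec p,\infty}}$, and then sends $\varepsilon\to 0$; the iterated case is dismissed with a one-line remark that the mixed case ``implies'' it. Your argument replaces the $\varepsilon$-bookkeeping by the clean interchange identity $\lim_k\sup_\lambda a_k(\lambda)=\sup_\lambda\lim_k a_k(\lambda)=\sup_{k,\lambda}a_k(\lambda)$ for sequences nondecreasing in $k$, which makes the monotone structure explicit and works uniformly (including the case where both sides are $+\infty$ without requiring a separate argument, whereas the paper notes its $\varepsilon$-argument handles $\|f\|=\infty$ ``similarly''). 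More importantly, you spell out the iterated case properly: you first establish the one-variable weak-norm monotone convergence and then apply it successively in $x$ and in $y$. The paper's reduction is implicit at best; your version supplies the missing reasoning. Your treatment of $p_1=\infty$ or $p_2=\infty$ is compressed (the $L^\infty_y$ essential-supremum case does require a short argument that the essential sup of an increasing limit equals the limit of the essential sups), but the claim is true and the verification is routine. Overall your route is slightly more elementary and more complete than the paper's.
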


\begin{proof}
It suffices to prove the convergence in $L^{\vec p, \infty}$, since it implies the convergence in iterated weak spaces.

Denote $f(x,y)=\lim_{k\rightarrow\infty} f_k(x,y)$.
We consider only the case of $p_1, p_2<\infty$. Other cases can be proved similarly.

First, we assume that $\|f\|_{L^{\vec p,\infty}} <\infty$. For any $\varepsilon>0$, there exists some $\lambda_0>0$ such that
\[
     \lambda_0 \|\chi^{}_{\{|f|>\lambda_0\}} \|_{L^{\vec p}} \ge (1-\varepsilon) \|f\|_{L^{\vec p,\infty}}.
\]
Since $\{f_k:\,k\ge 1\}$ is increasing, we have
\[
 \lim_{k\rightarrow\infty} \lambda_0 \|\chi^{}_{\{|f_k|>\lambda_0\}} \|_{L^{\vec p}}
 = \lambda_0 \|\chi^{}_{\{|f|>\lambda_0\}} \|_{L^{\vec p}}\ge (1-\varepsilon) \|f\|_{L^{\vec p,\infty}}.
\]
Hence for $k$ sufficiently large, we have
\[
  \lambda_0 \|\chi^{}_{\{|f_k|>\lambda_0\}} \|_{L^{\vec p}} \ge (1-\varepsilon)^2 \|f\|_{L^{\vec p,\infty}}.
\]
Therefore,
\[
  \lim_{k\rightarrow\infty} \left\|  f_k\right\|_{L^{\vec p, \infty}} \ge (1-\varepsilon)^2 \|f\|_{L^{\vec p,\infty}}.
\]
By letting $\varepsilon\rightarrow 0$, we get
\[
  \lim_{k\rightarrow\infty} \left\|  f_k\right\|_{L^{\vec p, \infty}} \ge  \|f\|_{L^{\vec p,\infty}}.
\]
On the other hand, since $|f_k|\le |f|$ almost everywhere,
the reverse inequality is obvious. Hence  $\lim_{k\rightarrow\infty} \left\|  f_k\right\|_{L^{\vec p, \infty}} =  \|f\|_{L^{\vec p,\infty}}$.
The case $\|f\|_{L^{\vec p,\infty}} =\infty$ can be proved similarly.
\end{proof}

With the above theorem, we can prove Fatou's lemma for weak norms, which improves \cite[Exercise 1.1.12 (d)]{Grafakos2008} slightly (we remove an extra constant).

\begin{Theorem}\label{thm: Fatou}
Let $W$ be either $L^{\vec p,\infty}$
or $L^{p_2,\infty}(L^{p_1,\infty})$, where $\vec p=(p_1, p_2)$ with $0<p_1,p_2\le \infty$.
Let $\{f_k:\, k\ge 1\}$ be a sequence of non-negative measurable functions.
Then we have
\begin{align*}
  \left\| \liminf_{k\rightarrow\infty} f_k\right\|_{W}
    &\le  \liminf_{k\rightarrow\infty} \left\|  f_k\right\|_{W}.
\end{align*}
\end{Theorem}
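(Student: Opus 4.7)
The plan is to deduce the weak-norm Fatou lemma from the monotone convergence theorem just established (Theorem~\ref{thm: monotone convergence theorem}), by the standard trick of writing the $\liminf$ as the increasing limit of infima. Set
\[
  g_n(x,y) := \inf_{k\ge n} f_k(x,y), \qquad n\ge 1.
\]
Then $\{g_n\}$ is an increasing sequence of non-negative measurable functions whose pointwise limit is $\liminf_{k\to\infty} f_k$, so Theorem~\ref{thm: monotone convergence theorem} gives
\[
  \bigl\|\liminf_{k\to\infty} f_k\bigr\|_W = \lim_{n\to\infty}\|g_n\|_W.
\]

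The one ingredient I would verify first is monotonicity of the relevant weak norm: if $0\le h_1\le h_2$ almost everywhere, then $\|h_1\|_W\le\|h_2\|_W$. For $W=L^{\vec p,\infty}$ this is immediate from $\{|h_1|>\lambda\}\subset\{|h_2|>\lambda\}$ and the monotonicity of the $L^{\vec p}$ quasi-norm applied to the characteristic functions; for $W=L^{p_2,\infty}(L^{p_1,\infty})$ it follows by applying monotonicity of the one-dimensional weak norm first in $x$, and then applying monotonicity of the weak $L^{p_2}$ quasi-norm in $y$ to the resulting non-negative function. Granted this, since $g_n\le f_k$ for every $k\ge n$, monotonicity gives $\|g_n\|_W\le \|f_k\|_W$ for every such $k$, hence $\|g_n\|_W\le \inf_{k\ge n}\|f_k\|_W$.

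Combining this with the monotone convergence identity above yields
\[
  \bigl\|\liminf_{k\to\infty} f_k\bigr\|_W
  = \lim_{n\to\infty}\|g_n\|_W
  \le \lim_{n\to\infty}\inf_{k\ge n}\|f_k\|_W
  = \liminf_{k\to\infty}\|f_k\|_W,
\]
which is the claimed inequality.

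The argument is essentially formal once Theorem~\ref{thm: monotone convergence theorem} is in hand, so no genuine obstacle remains; the only point requiring care is the monotonicity of the weak quasi-norms, and checking that the proof uniformly handles the edge cases $p_1=\infty$ or $p_2=\infty$, where the essential supremum replaces the distribution-function expression but the same level-set inclusion argument still delivers monotonicity.
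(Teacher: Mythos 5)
Your proposal is correct and takes essentially the same route as the paper: both define $g_n=\inf_{k\ge n} f_k$, apply the monotone convergence theorem (Theorem~\ref{thm: monotone convergence theorem}) to the increasing sequence $\{g_n\}$, and then use $g_n\le f_k$ for $k\ge n$ together with monotonicity of the quasi-norm. The only difference is that you spell out the monotonicity of the weak quasi-norm explicitly, while the paper takes it for granted.
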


\begin{proof}
Let $g_k = \inf_{l\ge k} f_l$. Then
  $\{g_k:\, k\ge 1\}$ is  increasingly convergent to $\liminf_{k\rightarrow\infty} f_k$. It follows from the monotone convergence theorem that
\[
 \left\| \liminf_{k\rightarrow\infty} f_k\right\|_{W}
 = \left\| \lim_{k\rightarrow\infty} g_k\right\|_{W}
 = \liminf_{k\rightarrow\infty}\left\|  g_k\right\|_{W}
 \le \liminf_{k\rightarrow\infty}\left\|  f_k\right\|_{W}.
\]
\end{proof}

However, the dominated convergence theorem fails in weak norm spaces.
For example, set $f_0(x) = 1/|x|^{n/p_1}$ and $f_k(x) = f_0(x) \chi_{[k,\infty]}(|x|)$.
Take some $g\in L^{p_2}\setminus\{0\}$.  We have
$\lim_{k\rightarrow\infty} f_k\otimes g (x,y) =0$.
Moreover, we see from Theorem~\ref{thm:weak norms}
that $f_k\otimes g \le f_0\otimes g \in L^{\vec p,\infty} \cap L^{p_2,\infty}(L^{p_1,\infty})$.
But
\[
  \|f_k\otimes g\|_{L^{\vec p,\infty}} =  \|f_k\otimes g\|_{L^{p_2,\infty}(L^{p_1,\infty})} =v_n^{1/p_1}\|g\|_{L^{p_2}},
  \,\, k\ge 1.
\]

\subsubsection{Convergence in measure and almost everywhere} \label{subsubsection:convergence}

It is known that if $\{f_k:\, k\ge 1\}$ is convergent in $L^p$ or $L^{p,\infty}$, then it is convergent in measure.
However, it is not true for mixed norm. Specifically, neither the strong convergence nor
 the weak convergence in mixed norm spaces implies the convergence in measure.

For example, set
$f_k = \chi^{}_{E_k}$, where
\[
  E_k = \begin{cases}
   \{(x,y):\, |x|^n < 1/|y|^m, 0<|y|<1/k\}, & p_1>p_2, \\
   \{(x,y):\, |x|^n < 1/|y|^m, |y|>k\}, & p_1<p_2.
  \end{cases}
\]
Then we have  $\|f_k\|_{L^{\vec p}} \rightarrow 0$.
Therefore,
\[
  \lim_{k\rightarrow\infty}\|f_k\|_{L^{\vec p,\infty}}
=\lim_{k\rightarrow\infty}\|f_k\|_{L^{p_2,\infty}(L^{p_1,\infty})}=0.
\]
However, for any $0<\varepsilon < 1$, $|\{(x,y):\, |f_k(x,y)|>\varepsilon\}| = \infty$.

Nevertheless, it was shown in \cite[Theorem 1]{Benedek1961} that if $\{f_k:\,k\ge 1\}$ is convergent to $f$
in $L^{\vec p}$, then it contains a subsequence  convergent    almost everywhere to $f$.
We show that the same is true for weak norms.

\begin{Theorem}\label{thm:pointwise convergence}
Let $\{f_k:\, k\ge 1\}$ be a sequence of measurable functions which is convergent to some $f$ in $L^{\vec p,\infty}$
or $L^{p_2,\infty}(L^{p_1,\infty})$,  where $\vec p=(p_1, p_2)$ with $0<p_1,p_2\le \infty$.
Then it contains a subsequence  convergent almost   everywhere to $f$.
\end{Theorem}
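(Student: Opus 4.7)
The plan is to extract a fast-converging subsequence and run a Chebyshev/Borel--Cantelli argument twice, once in each variable, in order to promote norm convergence to pointwise a.e.\ convergence without passing through convergence in measure (which, as established just before the theorem, is not available for mixed weak norms). I treat the two weak norms in parallel.

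For the mixed case, I first pick $\{f_{k_j}\}$ with $\|f_{k_j}-f\|_{L^{\vec p,\infty}}\le 2^{-j}$. Unpacking the definition at the threshold $\lambda=1/m$ and using
$\|\chi^{}_E\|_{L^{\vec p}}^{p_2}=\int_{\bbR^m}|E^y|^{p_2/p_1}\,dy$, I obtain
\[
\int_{\bbR^m} A_{j,m}(y)^{p_2/p_1}\,dy \le (m\,2^{-j})^{p_2},
\qquad A_{j,m}(y):=|\{x:|f_{k_j}(x,y)-f(x,y)|>1/m\}|.
\]
Setting $\varepsilon_j:=2^{-jp_1/2}$, Chebyshev on the variable $y$ gives $|\{y:A_{j,m}(y)>\varepsilon_j\}|\le m^{p_2}2^{-jp_2/2}$, which is summable in $j$. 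Borel--Cantelli in $y$ therefore yields a null set $N_m$ off of which $A_{j,m}(y)\le\varepsilon_j$ eventually, so $\sum_j A_{j,m}(y)<\infty$ for a.e.\ $y$. For each such $y$, a second Borel--Cantelli in $x$ (applied to the summable sequence of measures $A_{j,m}(y)$) produces a null set $N_m^y$ off of which $|f_{k_j}(x,y)-f(x,y)|\le 1/m$ eventually. Taking a union over $m\in\bbZ_+$ and invoking Fubini gives $f_{k_j}\to f$ a.e.\ on $\bbR^n\times\bbR^m$.

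For the iterated case, set $h_j(y):=\|f_{k_j}(\cdot,y)-f(\cdot,y)\|_{L^{p_1,\infty}_x}$ and extract $\{f_{k_j}\}$ with $\|h_j\|_{L^{p_2,\infty}_y}\le 4^{-j}$. The one-variable weak-type Chebyshev gives $|\{y:h_j(y)>2^{-j}\}|^{1/p_2}\le 2^{-j}$, so Borel--Cantelli in $y$ produces a null set off of which $h_j(y)\le 2^{-j}$ eventually; for such $y$ the sequence $f_{k_j}(\cdot,y)$ converges to $f(\cdot,y)$ in $L^{p_1,\infty}_x$ with summable rate, and a further Borel--Cantelli in $x$ (at each level $1/m$, then unioned over $m$) yields a.e.\ convergence in $x$. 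Fubini closes this case as well.

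The main obstacle is precisely the one the introduction warns about: since convergence in mixed weak norm does not entail convergence in measure, the usual one-step Borel--Cantelli does not apply. The critical technical point is the choice of the auxiliary cutoff $\varepsilon_j$ in the mixed case. A naive Chebyshev in $y$ only gives control of $A_{j,m}^{p_2/p_1}$, which, when $p_2/p_1>1$, fails to translate into summability of $A_{j,m}(y)$ itself; introducing a geometric threshold $\varepsilon_j$ decouples the two Borel--Cantelli steps and lets the argument go through uniformly in the regime $0<p_1,p_2\le\infty$. The cases $p_i=\infty$ are handled analogously, replacing the weak-norm Chebyshev by the essential-supremum estimate.
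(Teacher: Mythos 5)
Your proof is correct, and for the mixed weak norm case it takes a genuinely different and more self-contained route than the paper. The paper handles the mixed case by observing that, for each fixed $\lambda$, the indicators $\chi^{}_{\{|f_k-f|>\lambda\}}$ converge to zero in $L^{\vec p}$, and then invokes Benedek--Panzone's theorem (their Theorem~1) to extract an a.e.\ convergent subsequence of indicators, finishing with a diagonal subsequence over $\lambda=2^{-l}$. You instead work directly with the distribution function $A_{j,m}(y)$, insert the geometric threshold $\varepsilon_j=2^{-jp_1/2}$ to make Chebyshev in $y$ produce a summable bound even when $p_2/p_1>1$, and then run a second Borel--Cantelli in $x$. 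This avoids any appeal to the Benedek--Panzone result and makes the argument elementary from first principles; the cost is the small bookkeeping around the auxiliary sequence $\varepsilon_j$, which you correctly identify as the technical crux. For the iterated weak norm case your argument is essentially identical in structure to the paper's: a one-variable weak-type Chebyshev bound $|\{y : h_j(y) > 2^{-j}\}| \le 2^{-jp_2}$, Borel--Cantelli in $y$, then Borel--Cantelli in $x$ at levels $1/m$ followed by Fubini; the paper phrases this with explicit sets $E_i$ and $F_i$ but the mechanism is the same. One minor remark: in the final Fubini step one should note that the exceptional set $D=\{(x,y):f_{k_j}(x,y)\not\to f(x,y)\}$ is a jointly measurable set (being built from countably many conditions on measurable functions), so Fubini applies legitimately to conclude $|D|=0$ from the slice information; you invoke Fubini without stating this, but it is routine.
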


\begin{proof}
(i).\,\, First we assume that   $\lim_{k\rightarrow\infty}
\|f_k - f\|_{L^{\vec p, \infty}} = 0$. For any $\lambda>0$, we have
\[
  \lim_{k\rightarrow\infty }  \left\|\chi^{}_{\{|f_k-f|>\lambda\}}\right\|_{L^{\vec p}} =0,
\]
where $\{|f|>\lambda\}$ stands for $\{(x,y):\, |f(x,y)|>\lambda\}$.
By \cite[Theorem 1]{Benedek1961}, there is
some subsequence of $\{f_k:\, k\ge 1\}$, denoted by $\{f_{1,k}:\, k\ge 1\}$, such that
\[
   \lim_{k\rightarrow \infty} \chi^{}_{\{|f_{1,k}-f|>\lambda\}}(x,y) =0,\qquad
   (x,y)\in \bbR^n\times \bbR^m\setminus E_{\lambda},
\]
where
$|E_{\lambda}|=0$.
Consequently,
\[
   \limsup_{k\rightarrow \infty}  |f_{1,k}(x,y)-f(x,y)|\le \lambda,\qquad
   (x,y)\in \bbR^n\times \bbR^m \setminus E_{\lambda}.
\]

By setting $\lambda = 1$, $1/2$, $\ldots$, $1/2^l$, $\ldots$, we get a sequence of zero measured sets
$\{E_l:\, l\ge 1\}$ and subsequences $\{f_{l,k}:\,k\ge 1\}$ of $\{f_k:\, k\ge 1\}$,
such that $\{f_{l,k}:\,k\ge 1\}$ is a subsequence of
$\{f_{l-1,k}:\,k\ge 1\}$ for $l\ge 2$ and for each $l\ge 1$,
\[
     \limsup_{k\rightarrow \infty}  |f_{l,k}(x,y)-f(x,y)|\le \frac{1}{2^l},\qquad
   (x,y)\in \bbR^n\times \bbR^m\setminus E_{l}.
\]
Set $E=\cup_{l\ge 1} E_l$. Then for $(x,y)\not\in E$, we have
\[
     \limsup_{k\rightarrow \infty}  |f_{k,k}(x,y)-f(x,y)|\le \frac{1}{2^l},\qquad\forall l\ge 1.
\]
Hence
\[
     \lim_{k\rightarrow \infty}  |f_{k,k}(x,y)-f(x,y)|=0,\qquad (x,y)\in\bbR^n\times \bbR^m\setminus E.
\]
This proves the conclusion for the mixed weak norm.

(ii).\,\,  Next we assume that $\lim_{k\rightarrow\infty} \|f_k-f\|_{L^{p_2,\infty}(L^{p_1,\infty})}=0$.
We consider only the case of $0<p_1,p_2<\infty$. Other cases can be proved similarly.

For any $\alpha, \beta>0$, we have
\begin{align*}
&
  \hskip -3em \left|\left\{y:\, \big|\{x:\, |f_k(x,y)-f(x,y)|>\alpha\}\big|^{p_2/p_1} >\beta\right\}\right|\\
&\le   \left|\left\{y:\, \|f_k(x,y)-f(x,y)\|_{L^{p_1,\infty}} >\alpha\beta^{1/p_2}\right\}\right|\\
&\le \frac{1}{\alpha^{p_2}\beta} \|f_k-f\|_{L^{p_2,\infty}(L^{p_1,\infty})}^{p_2}\\
&\rightarrow 0,\qquad \mathrm{as}\,\, k\rightarrow\infty.
\end{align*}
Hence there is a subsequence $\{f_{k_l}:\, l\ge 1\}$ such that
\[
  \left|\left\{y:\, \Big|\{x:\, |f_{k_l}(x,y)-f(x,y)|>\frac{1}{2^l}\}\Big|^{p_2/p_1}
  > \frac{1}{2^{lp_2/p_1}}\right\}\right| < \frac{1}{2^l},\quad l\ge 1.
\]
Let $E=\cap_{i\ge 1}E_i$ and $F=\cap_{i\ge 1}F_i$, where
\begin{align*}
   E_i &= \bigcup_{l=i}^{\infty} \left\{y:\, \Big|\{x:\, |f_{k_l}(x,y)- f (x,y)|>\frac{1}{2^l}\}\Big|^{p_2/p_1}
  > \frac{1}{2^{lp_2/p_1}}\right\}, \\
   F_i &= \bigcup_{l=i}^{\infty} \left\{(x,y):\, |f_{k_l}(x,y)- f (x,y)|>\frac{1}{2^l}  \right\}.
\end{align*}
Then we have $|E_i|\le 1/2^{i-1}$ and therefore $|E|=0$.

Take some $y\not\in E$. Then there is some $i\ge 1$ such that $y\not\in E_i$. Consequently,
for $l\ge i$,
\[
  \Big|\{x:\, |f_{k_l}(x,y)-f(x,y)|>\frac{1}{2^l}\}\Big| \le \frac{1}{2^l}.
\]
Hence $|\{x:\, (x,y)\in F_j\}| \le 1/2^{j-1}$ for $j\ge i$. Therefore,
\[
  \Big| \big\{x:\, (x,y) \in F  \big\}\Big | = 0.
\]
It follows that
\[
 |F|= \iint_{\bbR^n\times \bbR^m} \chi^{}_F (x,y) dxdy
  = \int_{\bbR^m\setminus E} dy \int_{\bbR^n} \chi^{}_F  (x,y) dx = 0.
\]
For $(x,y)\not\in (\bbR^n\times E) \cup F$, there is some $i\ge 1$ such that
$(x,y)\not\in F_i$. Hence
\[
  |f_{k_l}(x,y) - f(x,y)| \le \frac{1}{2^l}, \qquad l\ge i.
\]
That is, $\{f_{k_l}:\, l\ge 1\}$ converges to $f$ almost everywhere.
\end{proof}

\subsubsection{Completeness of weak norm spaces}

It is known that both $L^p$ and weak $L^p$ are complete \cite{Grafakos2008}.
We show that the same is true for weak norms  in mixed  spaces.

\begin{Theorem}\label{thm:Completeness}
Let $W$ be either $L^{\vec p,\infty}$
or $L^{p_2,\infty}(L^{p_1,\infty})$, where $\vec p=(p_1, p_2)$ with $0<p_1,p_2\le \infty$.
Let $\{f_k:\,k\ge 1\}$ be a Cauchy sequence in $W$, that is,
\[
    \lim_{k,l\rightarrow\infty} \|f_k -f_l\|_W = 0.
\]
Then there is some $f\in W$ such that
$\lim_{k\rightarrow\infty}\|f-f_k\|_W= 0$.
\end{Theorem}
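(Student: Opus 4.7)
The plan is the classical completeness scheme for a quasi-Banach function space: extract a rapidly Cauchy subsequence, produce an almost-everywhere pointwise limit $f$, upgrade to convergence in $W$ via Fatou's Lemma (Theorem~\ref{thm: Fatou}), and then invoke the Cauchy hypothesis to pass from subsequential to full convergence.

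First, I would check that both $\|\cdot\|_{L^{\vec p,\infty}}$ and $\|\cdot\|_{L^{p_2,\infty}(L^{p_1,\infty})}$ are quasi-norms, i.e. $\|f+g\|_W \le C_{\vec p}(\|f\|_W + \|g\|_W)$. This is immediate from the inclusion $\{|f+g|>\lambda\} \subset \{|f|>\lambda/2\} \cup \{|g|>\lambda/2\}$ combined with the quasi-triangle inequality already available in $L^{\vec p}$ (respectively, in $L^{p_1,\infty}_x$ followed by $L^{p_2,\infty}_y$). With this in hand, I would extract a subsequence $\{f_{k_j}\}$ with $\|f_{k_{j+1}} - f_{k_j}\|_W \le 4^{-j}$ for every $j$.

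Next, I would establish almost-everywhere convergence of $\{f_{k_j}\}$ by a Borel--Cantelli-type argument. In the mixed-weak case, set $E_j := \{(x,y) : |f_{k_{j+1}}(x,y) - f_{k_j}(x,y)| > 2^{-j}\}$; the definition of $\|\cdot\|_{L^{\vec p,\infty}}$ gives $\|\chi_{E_j}\|_{L^{\vec p}} \le 2^{-j}$. Using the $L^{\vec p}$ quasi-triangle inequality (raised to the power $\min(p_1,p_2,1)$ if necessary), one obtains $\|\chi_{\bigcup_{j\ge N} E_j}\|_{L^{\vec p}} \to 0$; since $\bigcup_{j\ge N} E_j$ is decreasing in $N$, monotone convergence (Theorem~\ref{thm: monotone convergence theorem}) and Fubini give $\bigl|\bigcap_N \bigcup_{j\ge N} E_j\bigr| = 0$, so a.e.\ $(x,y)$ lies in only finitely many $E_j$ and the telescoping series $\sum_j (f_{k_{j+1}} - f_{k_j})(x,y)$ converges absolutely. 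In the iterated-weak case, Chebyshev at the outer level gives $|\{y : \|(f_{k_{j+1}}-f_{k_j})(\cdot,y)\|_{L^{p_1,\infty}_x} > 2^{-j}\}| \le 2^{-jp_2}$; an ordinary Borel--Cantelli on $y$ together with the corresponding rapidly-Cauchy argument at the inner $L^{p_1,\infty}(\bbR^n)$ level (working on each surviving $y$-fiber) yields a.e.\ convergence on $\bbR^n \times \bbR^m$. In either case, define $f$ as the resulting a.e.\ limit.

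Finally, applying Fatou (Theorem~\ref{thm: Fatou}) to the nonnegative sequence $\{|f_{k_l} - f_{k_j}|\}_{l \ge j}$ yields
\[
\|f - f_{k_j}\|_W \le \liminf_{l \to \infty} \|f_{k_l} - f_{k_j}\|_W,
\]
which tends to $0$ as $j \to \infty$ by the Cauchy hypothesis; the quasi-triangle inequality then gives $f \in W$. To convert subsequential convergence into full convergence, split $f - f_k = (f - f_{k_j}) + (f_{k_j} - f_k)$ and use the Cauchy property for the second term with $k$ and $k_j$ both large. The step I expect to be the main obstacle is the almost-everywhere convergence of $\{f_{k_j}\}$ in the iterated-weak setting, where the control lives on individual $y$-fibers in $L^{p_1,\infty}_x$ and has to be threaded together into a single exceptional null set on $\bbR^n \times \bbR^m$; this is essentially the Cauchy-sequence analogue of the double bookkeeping already carried out in the proof of Theorem~\ref{thm:pointwise convergence}.
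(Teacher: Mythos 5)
Your proof is correct, but it takes a somewhat different route from the paper's. The paper's argument is shorter and cleaner because it introduces the dominating series $g_i = \sum_{l\ge i} |f_{k_{l+1}} - f_{k_l}|$: once one shows $\|g_i\|_W \to 0$ (via Lemma~\ref{Lm:weak norm sum}, whose constants $a_i, b_i, c_i$ are precisely designed to sum a telescoping series in the quasi-normed setting), the already-proved Theorem~\ref{thm:pointwise convergence} gives $g_i \to 0$ a.e., and then $|f_{k_i} - f| \le g_i$ yields $\|f_{k_i} - f\|_W \le \|g_i\|_W$ directly, with no need for Fatou. You instead rebuild the a.e.\ convergence from scratch via a Borel--Cantelli argument (which is essentially what the interior of Theorem~\ref{thm:pointwise convergence} does) and then upgrade to $W$-convergence with Fatou's lemma on $\{|f_{k_l} - f_{k_j}|\}_{l\ge j}$; both steps are valid. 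One small infelicity: you cite Theorem~\ref{thm: monotone convergence theorem} to pass from $\|\chi_{\bigcup_{j\ge N} E_j}\|_{L^{\vec p}} \to 0$ to $|\bigcap_N \bigcup_{j\ge N} E_j| = 0$, but that theorem is stated for increasing sequences, and no such result is needed here — the containment $\chi_{\bigcap_M\bigcup_{j\ge M}E_j} \le \chi_{\bigcup_{j\ge N}E_j}$ already forces $\|\chi_{\bigcap_M\bigcup_{j\ge M}E_j}\|_{L^{\vec p}}=0$, and Fubini finishes. Overall your approach is a little more computational (especially in the double Borel--Cantelli for the iterated weak norm), while the paper's $g_i$ device factors the work through results already in hand and produces an explicit rate $\|f_{k_i}-f\|_W \le 2^{-(i-1)}$ at no extra cost.
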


Before giving a proof of the above theorem, we introduce a preliminary result.

\begin{Lemma} \label{Lm:weak norm sum}
Let $a_i$, $\tilde a_i$, $b_i$ and  $c_i$ be positive numbers, $1\le i\le k\le \infty$,  such that
$\sum_{i=1}^k a_i = \sum_{i=1}^k \tilde a_i = 1$, $b_1=\ldots=b_k=1$ for $1\le p_1\le \infty$ and $\sum_{i=1}^k b_i^{p_1/(1-p_1)}\le 1$ for $0<p_1<1$,
and  $c_1=\ldots=c_k=1$ for $1\le p_2\le \infty  $ and $\sum_{i=1}^k c_i^{p_2/(1-p_2)}\le 1$ for $0<p_2<1$.
Then    we have
\begin{align*}
  \|f_1+\ldots + f_k\|_{L^{\vec p,\infty}} &\le
    \sum_{i=1}^k
    \frac{1}{a_ib_ic_i} \|f_i\|_{L^{\vec p,\infty}},  \\
  \|f_1+\ldots + f_k\|_{L^{p_2,\infty}(L^{p_1,\infty})} &\le
    \sum_{i=1}^k \frac{1}{a_i\tilde a_i b_ic_i} \|f_i\|_{L^{p_2,\infty}(L^{p_1,\infty})}.
\end{align*}
\end{Lemma}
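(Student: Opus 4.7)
The plan is to reduce both inequalities to two elementary ingredients: the level-set decomposition
\[
\{|f_1+\cdots+f_k|>\lambda\}\subset\bigcup_{i=1}^k\{|f_i|>a_i\lambda\},
\]
which is immediate from $\sum_i a_i=1$, combined with a single weighted subadditivity estimate for $t\mapsto t^{1/p}$ that covers both the Banach range $p\ge 1$ and the quasi-Banach range $0<p<1$. Namely, for non-negative $y_i$ I will use
\[
\Big(\sum_i y_i\Big)^{1/p}\le\sum_i\frac{y_i^{1/p}}{b_i},
\]
with $b_i=1$ when $p\ge 1$ (by concavity of $t^{1/p}$) and $\sum_i b_i^{p/(1-p)}\le 1$ when $0<p<1$. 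The latter case follows from H\"older's inequality applied to $v_i=y_i^{1/p}/b_i$, viz.\ $\sum_i v_i^p b_i^p\le(\sum_i v_i)^p(\sum_i b_i^{p/(1-p)})^{1-p}$ with conjugate exponents $1/p$ and $1/(1-p)$; the constraint on $\sum b_i^{p/(1-p)}$ is exactly what makes the second factor at most one.

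For the mixed weak norm, I set $A_{i,\lambda}=\{|f_i|>a_i\lambda\}$. Slicing in $x$, the $x$-section of $\bigcup_i A_{i,\lambda}$ at height $y$ has measure bounded by $\sum_i|A_{i,\lambda}^{(y)}|$, so the weighted bound with $p=p_1$ and weights $b_i$ controls $|\bigcup_i A_{i,\lambda}^{(y)}|^{1/p_1}$ by $\sum_i|A_{i,\lambda}^{(y)}|^{1/p_1}/b_i$. Applying the $L^{p_2}$ triangle inequality (Minkowski if $p_2\ge 1$, or the $p_2$-triangle inequality plus the same weighted bound with $c_i$ if $p_2<1$) to this sum yields
\[
\|\chi_{\{|f_1+\cdots+f_k|>\lambda\}}\|_{L^{\vec p}}\le\sum_i\frac{\|\chi_{A_{i,\lambda}}\|_{L^{\vec p}}}{b_ic_i}.
\]
Multiplying by $\lambda$, rewriting $\lambda\|\chi_{A_{i,\lambda}}\|_{L^{\vec p}}=a_i^{-1}(a_i\lambda)\|\chi_{\{|f_i|>a_i\lambda\}}\|_{L^{\vec p}}\le a_i^{-1}\|f_i\|_{L^{\vec p,\infty}}$, and taking the supremum over $\lambda>0$, produces the first inequality.

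For the iterated weak norm I run the same scheme in two nested layers. At the inner layer, the level-set decomposition with $a_i$ together with the weighted $|\cdot|^{1/p_1}$ estimate with $b_i$ gives, for every $y$,
\[
\|(f_1+\cdots+f_k)(\cdot,y)\|_{L_x^{p_1,\infty}}\le\sum_i\frac{\|f_i(\cdot,y)\|_{L_x^{p_1,\infty}}}{a_ib_i}.
\]
At the outer layer I repeat the argument on this weighted sum: the level-set inclusion uses $\tilde a_i$, and the weighted $|\cdot|^{1/p_2}$ estimate uses $c_i$. This contributes the extra factor $1/(\tilde a_i c_i)$ to each term and yields the second inequality.

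The main obstacle is the case $p<1$ of the weighted subadditivity estimate, which requires the H\"older bookkeeping described above. Once that estimate is packaged into a single lemma that subsumes both the Banach case ($b_i=c_i=1$) and the quasi-Banach case (the constraints on $\sum b_i^{p/(1-p)}$ and $\sum c_i^{p/(1-p)}$), everything else is a routine tracking of the level-set inclusions through the one or two suprema defining the mixed and iterated weak norms.
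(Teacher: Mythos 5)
Your proof is correct and follows essentially the same strategy as the paper's: the level-set inclusion $\{|\sum f_i|>\lambda\}\subset\bigcup_i\{|f_i|>a_i\lambda\}$, followed by the weighted subadditivity estimate $(\sum y_i)^{1/p}\le\sum y_i^{1/p}/b_i$ (which the paper also obtains via H\"older's inequality with conjugate exponents $1/p$ and $1/(1-p)$), applied in each variable before taking the supremum over $\lambda$. The paper proves only the first inequality and says the second is similar; your nested two-layer argument for the iterated weak norm is a correct way to fill that in.
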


\begin{proof}
We prove only the first inequality. The second one can be proved similarly.

For any $\lambda>0$, we have
\[
\{|f_1+\ldots+f_k|>\lambda\} \subset \{|f_1|>a_1\lambda\}\cup\ldots \cup\{|f_k|>a_k\lambda\}.
\]
Hence
\[
\chi^{}_{\{|f_1+\ldots+f_k|>\lambda\}}
\le  \sum_{i=1}^k \chi^{}_{\{|f_i|>a_i\lambda\}}.
\]
If $p_1\ge 1$, then
\[
   \|\chi^{}_{\{|f_1+\ldots+f_k|>\lambda\}}\|_{L_x^{p_1}}
  \le
    \sum_{i=1}^k  \|\chi^{}_{\{|f_i |>a_i\lambda\}}\|_{L_x^{p_1}}.
\]
If $0<p_1<1$, then
\[
   \|\chi^{}_{\{|f_1+\ldots+f_k|>\lambda\}}\|_{L_x^{p_1}}^{p_1}
  \le
    \sum_{i=1}^k  \|\chi^{}_{\{|f_i |>a_i\lambda\}}\|_{L_x^{p_1}}^{p_1}.
\]
Set $q_1 = 1/p_1$. We see from H\"older's inequality that
\begin{align}
\|\chi^{}_{\{|f_1+\ldots+f_k|>\lambda\}}\|_{L_x^{p_1}}
&\le\left( \sum_{i=1}^k  \|\chi^{}_{\{|f_i |>a_i\lambda\}}\|_{L_x^{p_1}}^{p_1}\right)^{q_1} \nonumber \\
&\le\left( \sum_{i=1}^k \frac{1}{b_i} \|\chi^{}_{\{|f_i |>a_i\lambda\}}\|_{L_x^{p_1}} \right)
   \left( \sum_{i=1}^k b_i^{q'_1/q_1}\right)^{q_1/q'_1} \nonumber \\
&\le \sum_{i=1}^k \frac{1}{b_i} \|\chi^{}_{\{|f_i |>a_i\lambda\}}\|_{L_x^{p_1}}. \label{eq:e5}
\end{align}
With similar arguments we get
\[
\left\| \|\chi^{}_{\{|f_1+\ldots+f_k|>\lambda\}}\|_{L_x^{p_1}} \right\|_{L_y^{p_2}}
  \le
    \sum_{i=1}^k \frac{1}{b_ic_i} \left\|   \|\chi^{}_{\{|f_i |>a_i\lambda\}}\|_{L_x^{p_1}} \right\|_{L_y^{p_2}}.
\]
Hence
\[
\lambda  \|\chi^{}_{\{|f_1+\ldots+f_k|>\lambda\}}\|_{L^{\vec p}}
  \le
    \sum_{i=1}^k \frac{1}{b_ic_i}   \lambda \|\chi^{}_{\{|f_i |>a_i\lambda\}}\|_{L^{\vec p}}.
\]
Taking supremums on both sides, we get the conclusion as desired.
\end{proof}

\begin{proof}[Proof of Theorem~\ref{thm:Completeness}]
We prove the conclusion only for $W=L^{\vec p,\infty}$. The other case can be proved similarly.

For $k\ge 1$, let
\begin{align*}
&b_k = \begin{cases}
1, & p_1\ge 1, \\
\frac{b_0}{2^k}, & 0<p_1<1,
\end{cases}
&c_k = \begin{cases}
1, & p_2\ge 1, \\
\frac{c_0}{2^k}, & 0<p_2<1,
\end{cases}
\end{align*}
where $b_0$ and $c_0$ are constants such that $\sum_{k=1}^{\infty} b_k^{p_1/(1-p_1)} = \sum_{k=1}^{\infty} c_k^{p_2/(1-p_2)} =1$ for $0<p_1,p_2<1$.

We see from the hypothesis that there is a subsequence of
$\{f_k:\, k\ge 1\}$, say $\{f_{k_l}:\, l\ge 1\}$, such that
\[
\left\|  f_{k_{l+1}} - f_{k_l}\right\|_{L^{\vec p,\infty}} \le \frac{b_l c_l}{2^{2l}}.
\]
Let
\[
  g_i = \sum_{l=i}^{\infty} |f_{k_{l+1} - f_{k_l}}|.
\]
We see from Lemma~\ref{Lm:weak norm sum}  that
\[
  \|g_i\|_{L^{\vec p,\infty}}
    \le  \sum_{l=i}^{\infty} \frac{2^l}{b_lc_l}
     \left\|  f_{k_{l+1} - f_{k_l}} \right\|_{L^{\vec p,\infty}}
    \le \frac{1}{2^{i-1}}\rightarrow 0.
\]
By Theorem \ref{thm:pointwise convergence}, $\{g_i:\,i\ge 1\}$ contains a subsequence convergent almost everywhere.
Since $\{g_i:\,i\ge 1\}$ is decreasing, it is convergent to zero almost everywhere.

Suppose that for some $(x,y)$, $g_i(x,y)$ tends to zero as $i$ tends to the infinity.
Since
\[
   |f_{k_m}(x,y) - f_{k_l}(x,y)| \le g_i(x,y),\qquad m>l\ge i,
\]
 $\{f_{k_l}(x,y):\,l\ge 1\}$ is a Cauchy sequence in $\bbR$ for almost all $(x,y)\in\bbR^{2n}$. Denote
$f(x,y)=\lim_{l\rightarrow \infty} f_{k_l}(x,y)$. Then $f$ is well defined almost everywhere.

For any $i\ge 1$, we have
\begin{align*}
\|f_{k_i} - f\|_{L^{\vec p,\infty}}
 &\le  \|g_i\|_{L^{\vec p,\infty}}
 \le \frac{1}{2^{i-1}}.
\end{align*}
Hence $\lim_{i\rightarrow\infty} \|f_{k_i} - f\|_{L^{\vec p,\infty}} = 0$.
Since $\{f_k:\,k\ge 1\}$ is a Cauchy sequence in $L^{\vec p,\infty}$, it is easy to see that
$\lim_{k\rightarrow\infty} \|f_{k} - f\|_{L^{\vec p,\infty}} = 0$. This completes the proof.
\end{proof}

It is known that if a sequence of measurable functions $\{f_n:\, n\ge 1\}$ is convergent in $L^{\vec p}$, then we have
$\lim_{n\rightarrow \infty}\|f_n\|_{L^{\vec p}} = \|f\|_{L^{\vec p}}$. We show that the same is true for weak norms.

\begin{Theorem}\label{thm:convergence norm}
Let $W$ be either $L^{\vec p,\infty}$
or $L^{p_2,\infty}(L^{p_1,\infty})$, where  $\vec p = (p_1, p_2)$ with $0<p_1,p_2\le \infty$.
Suppose that $\lim_{k\rightarrow\infty} \|f_k-f\|_{W} = 0$.
Then we have $\lim_{k\rightarrow\infty} \|f_k \|_{W} = \|f \|_{W}$.
\end{Theorem}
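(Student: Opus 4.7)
The plan is to promote Lemma~\ref{Lm:weak norm sum} (applied with $k=2$) to a quasi-triangle inequality whose leading coefficient can be driven arbitrarily close to $1$, and then apply this inequality twice---once to $f_k = f + (f_k - f)$ and once to $f = f_k + (f - f_k)$---in order to squeeze $\|f_k\|_W$ to $\|f\|_W$. Concretely, I intend to establish that for every $\varepsilon \in (0,1)$ there exist constants $A_\varepsilon, B_\varepsilon \in (0,\infty)$ with $A_\varepsilon \to 1$ as $\varepsilon \to 0^+$ such that
\[
   \|u+v\|_W \le A_\varepsilon \|u\|_W + B_\varepsilon \|v\|_W
\]
for all measurable $u,v$.

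To obtain this, I would invoke Lemma~\ref{Lm:weak norm sum} with the choices $a_1 = 1-\varepsilon$, $a_2 = \varepsilon$ (and, in the iterated case, also $\tilde a_1 = 1-\varepsilon$, $\tilde a_2 = \varepsilon$). For the $b_i$ factors, if $p_1 \ge 1$ keep $b_1 = b_2 = 1$; if $0 < p_1 < 1$, set $b_1 = (1-\varepsilon)^{(1-p_1)/p_1}$ and $b_2 = \varepsilon^{(1-p_1)/p_1}$, so that $b_1^{p_1/(1-p_1)} + b_2^{p_1/(1-p_1)} = 1$ and $b_1 \to 1$ as $\varepsilon \to 0^+$. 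Choose $c_i$ analogously relative to $p_2$. Lemma~\ref{Lm:weak norm sum} then yields the displayed inequality with $A_\varepsilon = 1/(a_1 b_1 c_1)$ (mixed case) or $A_\varepsilon = 1/(a_1 \tilde a_1 b_1 c_1)$ (iterated case); in either case $A_\varepsilon \to 1$ as $\varepsilon \to 0^+$, while $B_\varepsilon$ remains finite.

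With this quasi-triangle inequality at hand, writing $f_k = f + (f_k - f)$ gives
\[
   \|f_k\|_W \le A_\varepsilon \|f\|_W + B_\varepsilon \|f_k - f\|_W.
\]
Taking $\limsup_k$ and using $\|f_k - f\|_W \to 0$ yields $\limsup_k \|f_k\|_W \le A_\varepsilon \|f\|_W$; letting $\varepsilon \to 0^+$ then produces $\limsup_k \|f_k\|_W \le \|f\|_W$. Symmetrically, decomposing $f = f_k + (f - f_k)$ and applying the same inequality with the roles of $f$ and $f_k$ reversed gives $\|f\|_W \le A_\varepsilon \|f_k\|_W + B_\varepsilon \|f - f_k\|_W$; taking $\liminf_k$ and then $\varepsilon \to 0^+$ yields $\|f\|_W \le \liminf_k \|f_k\|_W$. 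Combining the two bounds closes the argument.

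The main obstacle is the regime $0 < p_1 < 1$ (or $0 < p_2 < 1$), where the constraint $\sum_i b_i^{p_1/(1-p_1)} \le 1$ in Lemma~\ref{Lm:weak norm sum} introduces a genuine sub-additivity penalty; the resolution is to distribute the penalty unevenly between the two terms, so that $b_1 \to 1$ while $b_2 \to 0$, with the blow-up of the coefficient of $\|v\|_W$ being harmless since $\|f_k - f\|_W \to 0$ absorbs any finite factor. A secondary technical point is the case of infinite norms: if $\|f\|_W = \infty$, the reverse quasi-triangle inequality forces $\|f_k\|_W \to \infty$, and if $\|f\|_W < \infty$ then the forward inequality together with $\|f_k - f\|_W \to 0$ renders $\|f_k\|_W$ eventually finite, so in both cases the $\limsup/\liminf$ argument applies without modification.
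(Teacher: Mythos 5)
Your proposal is correct and takes essentially the same route as the paper: both invoke Lemma~\ref{Lm:weak norm sum} with $k=2$ to obtain a quasi-triangle inequality whose leading coefficient tends to $1$, then apply it to $f_k=f+(f_k-f)$ and $f=f_k+(f-f_k)$ and pass to $\limsup/\liminf$. The only cosmetic difference is that you tie all the free parameters ($a_1$, $\tilde a_1$, $b_1$, $c_1$) to a single $\varepsilon$, whereas the paper lets $a$, $b_1$, $c_1$ tend to $1$ independently.
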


\begin{proof}
We prove the conclusion only for $W=L^{\vec p,\infty}$. The other case can be proved similarly.

For $1\le p_1\le \infty$, set $b_1=b_2=1$. And for $0<p_1<1$, set $0<b_1<1$ and
$b_2 = (1-b_1^{p_1/(1-p_1)})^{(1-p_1)/p_1}$.   Define $c_1$ and $c_2$ similarly (replacing $(p_1,b_1,b_2)$ by $(p_2,c_1,c_2)$).

For any $0<a<1$, we see from Lemma~\ref{Lm:weak norm sum} that
\[
  \|f_k \|_{L^{\vec p, \infty}} = \|f_k -f + f\|_{L^{\vec p, \infty}}
  \le \frac{1}{(1-a)b_2c_2} \|f_k -f\|_{L^{\vec p, \infty}} + \frac{1}{ab_1c_1} \|f\|_{L^{\vec p, \infty}}.
\]
Hence
\[
  \limsup_{k\rightarrow\infty}
   \|f_k \|_{L^{\vec p, \infty}}
  \le  \frac{1}{ab_1c_1} \|f\|_{L^{\vec p, \infty}}.
\]
Letting $a, b_1,c_1\rightarrow 1$, we get
\[
  \limsup_{k\rightarrow\infty}
   \|f_k \|_{L^{\vec p, \infty}}
  \le   \|f\|_{L^{\vec p, \infty}}.
\]

On the other hand, since
\[
  \|f \|_{L^{\vec p, \infty}} = \|f -f_k + f_k\|_{L^{\vec p, \infty}}
  \le \frac{1}{(1-a)b_2c_2} \|f -f_k\|_{L^{\vec p, \infty}} + \frac{1}{ab_1c_1} \|f_k\|_{L^{\vec p, \infty}},
\]
by letting $k\rightarrow\infty$ and $a,b_1,c_1\rightarrow 1$ successively, we get
\[
   \|f\|_{L^{\vec p, \infty}} \le \liminf_{k\rightarrow\infty}
   \|f_k \|_{L^{\vec p, \infty}}.
\]
Now the conclusion follows.
\end{proof}

In \cite{Benedek1961}, the Riesz theorem
for mixed norm Lebesgue spaces was proved.
It says that if $\lim_{k\rightarrow\infty }\|f_k\|_{L^{\vec p}} = \|f\|_{L^{\vec p}}$
and $\lim_{k\rightarrow\infty} f_k(x,y) = f(x,y)$ almost everywhere on $\bbR^{2n}$,
where $\vec p = (p_1, p_2)$ with $1\le p_1, p_2<\infty$,
then we have
\[
  \lim_{k\rightarrow\infty }\|f_k - f\|_{L^{\vec p}} = 0.
\]

Whenever weak norms are considered, the above conclusion fails.
For example, set $f_0(x) = 1/|x|^{n/p_1}$ and $f_k(x) = f_0(x) \chi_{[0,k]}(|x|)$.
Take some $g\in L^{p_2}\setminus\{0\}$.  We have
\[
    \lim_{k\rightarrow\infty} f_k\otimes g (x,y) = f_0(x)g(y)
\]
and
\[
    \lim_{k\rightarrow\infty} \|f_k\otimes g\|_{L^{\vec p,\infty}}=
    \lim_{k\rightarrow\infty} \|f_k\otimes g\|_{L^{p_2,\infty}(L^{p_1,\infty})}=v_n^{1/p_1}\|g\|_{L^{p_2}}.
\]
However, for any $k\ge 1$,
\[
    \|f_k\otimes g - f_0\otimes g\|_{L^{\vec p,\infty}}=
     \|f_k\otimes g - f_0\otimes g\|_{L^{p_2,\infty}(L^{p_1,\infty})}
     =v_n^{1/p_1}\|g\|_{L^{p_2}}.
\]
Hence $\{f_k\otimes g:\, k\ge 1 \}$ is not convergent to $f_0\otimes g$ in $L^{\vec p,\infty}$ or $L^{p_2,\infty}(L^{p_1,\infty})$.

\subsection{Convergence in truncated norm}

We see from the previous subsection that while a  sequence convergent in mixed norm contains a subsequence convergent almost everywhere,
it might not contain a subsequence convergent in measure.
This prompts us to consider the following substitution of convergence in measure.

\begin{Definition}
Suppose that $\vec p=(p_1, p_2)$ with $0<p_1, p_2\le \infty$.
We say that a sequence of measurable functions $\{f_k:\, k\ge 1\}$ is convergent to some $f$ in truncated   $L^{\vec p}$   norm if for any $\lambda>0$,
\[
  \lim_{k\rightarrow\infty} \| \chi^{}_{\{|f_k-f|>\lambda\}} \|_{L^{\vec p}} = 0.
\]
\end{Definition}

We see from the definition that convergence in truncated $L^{\vec p}$ norm is the same as convergence in measure if $p_1 = p_2$.
However, they are not equivalent if $p_1\ne p_2$. For an example, see Subsubsection~\ref{subsubsection:convergence}.

It is easy to see that if  $\{f_k:\, k\ge 1\}$ is convergent to some $f$ in $L^{\vec p,\infty}$,
then it is also convergent to $f$ in truncated     norm.
Moreover, we see from the proof of  Theorem~\ref{thm:pointwise convergence} that
a sequence convergent in truncated    norm always contains a subsequence convergent almost everywhere.

The following is a characterization of convergence in truncated norm, which shows also the completeness of such convergence.

\begin{Theorem}
Suppose that $\vec p=(p_1, p_2)$ with $0<p_1, p_2\le \infty$.
Let  $\{f_k:\, k\ge 1\}$  be a sequence of measurable functions.
Then   $\{f_k:\, k\ge 1\}$ is convergent to some  $f$ in truncated    norm if and only if
$\{f_k:\, k\ge 1\}$
is a Cauchy sequence in truncated norm, i.e.,
\[
  \lim_{k,l\rightarrow\infty} \| \chi^{}_{\{|f_k-f_l|>\lambda\}} \|_{L^{\vec p}} = 0,\qquad \forall \lambda>0.
\]
\end{Theorem}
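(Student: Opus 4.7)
The forward direction is immediate: since $\{|f_k-f_l|>\lambda\}\subset \{|f_k-f|>\lambda/2\}\cup\{|f_l-f|>\lambda/2\}$, the characteristic function of the left-hand side is majorized by the sum of those on the right, and the (quasi-)triangle inequality for $\|\chi_{\cdot}\|_{L^{\vec p}}$ (the two-term case of Lemma~\ref{Lm:weak norm sum} applied to characteristic functions) yields the Cauchy condition upon letting $k,l\to\infty$.

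For the converse, the plan is to recycle the pattern from Theorems~\ref{thm:pointwise convergence} and \ref{thm:Completeness}. Using the Cauchy hypothesis, extract a subsequence $\{f_{k_l}\}$ so that $\|\chi_{A_l}\|_{L^{\vec p}}\le 2^{-l}$ (replaced by $b_lc_l/4^l$ with the tapered weights $b_l,c_l\sim 2^{-l}$ when $p_1<1$ or $p_2<1$, exactly as in Theorem~\ref{thm:Completeness}), where $A_l:=\{|f_{k_{l+1}}-f_{k_l}|>2^{-l}\}$. Setting $E_i:=\bigcup_{l\ge i}A_l$, countable (quasi-)subadditivity of $\|\chi_{\cdot}\|_{L^{\vec p}}$ on nested unions gives $\|\chi_{E_i}\|_{L^{\vec p}}\to 0$ as $i\to\infty$; since $\|\chi_E\|_{L^{\vec p}}=0$ is equivalent to $|E|=0$ (a routine Fubini on the slice level, valid for every $0<p_1,p_2\le\infty$), the exceptional set $\bigcap_i E_i$ is null. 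Off $E_i$ the telescoping inequality $|f_{k_m}-f_{k_l}|\le 2^{1-l}$ holds for $m>l\ge i$, so $f_{k_l}$ converges pointwise almost everywhere to a well-defined function $f$.

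Passing to the limit $m\to\infty$ yields $|f_{k_l}-f|\le 2^{1-l}$ on $E_i^c$ for $l\ge i$. Hence for any $\lambda>0$ and $l\ge i$ with $2^{1-l}\le\lambda$, one has $\{|f_{k_l}-f|>\lambda\}\subset E_i$, whence $\|\chi_{\{|f_{k_l}-f|>\lambda\}}\|_{L^{\vec p}}\le\|\chi_{E_i}\|_{L^{\vec p}}\to 0$. The full sequence then inherits convergence in truncated norm via
\[
\chi_{\{|f_k-f|>\lambda\}}\le\chi_{\{|f_k-f_{k_l}|>\lambda/2\}}+\chi_{\{|f_{k_l}-f|>\lambda/2\}}
\]
together with the Cauchy hypothesis. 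The principal technical nuisance lies in bookkeeping the quasi-triangle constants when $p_i<1$, since $\|\chi_{\cdot}\|_{L^{\vec p}}$ is then only quasi-subadditive on countable unions; this is handled verbatim by the weight damping from Lemma~\ref{Lm:weak norm sum} and the proof of Theorem~\ref{thm:Completeness}, so no new ideas beyond those already developed in this section are needed.
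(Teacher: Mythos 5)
Your proposal is correct and follows essentially the same route as the paper: same forward direction via the inclusion $\{|f_k-f_l|>\lambda\}\subset\{|f_k-f|>\lambda/2\}\cup\{|f_l-f|>\lambda/2\}$ with the damped-weight quasi-subadditivity, and for the converse the same subsequence extraction, telescoping, and tail estimate from the proof of Theorem~\ref{thm:Completeness}. The only (minor) divergence is cosmetic: where the paper applies the quasi-subadditivity to $g_i=\sum_{l\ge i}\chi_{A_l}$ and cites Theorem~\ref{thm:pointwise convergence} to get a.e.\ convergence of the decreasing sequence $g_i\to 0$, you work directly with $\chi_{E_i}$, $E_i=\bigcup_{l\ge i}A_l$, and invoke the elementary fact that $\|\chi_E\|_{L^{\vec p}}=0$ iff $|E|=0$ to see that $\bigcap_i E_i$ is null; this shortcut is valid and sidesteps the appeal to the a.e.\ convergence theorem, but it does not change the structure of the argument.
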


\begin{proof}
First we prove the Necessity.
Suppose that   $\{f_k:\, k\ge 1\}$ is convergent to $f$ in truncated norm.
For any $\lambda>0$, we have
\[
  \{|f_k - f_l| >\lambda\} \subset \{|f_k - f| >\frac{\lambda}{2}\} \cup \{|f_l - f| >\frac{\lambda}{2}\}.
\]
Hence
\[
  \chi^{}_{\{|f_k - f_l| >\lambda\}} \le \chi^{}_{\{|f_k - f| >\lambda/2\}} + \chi^{}_{\{|f_l - f| >\lambda/2\}}.
\]
Therefore,
\begin{align*}
   \lim_{k,l\rightarrow\infty} \| \chi^{}_{\{|f_k-f_l|>\lambda\}} \|_{L^{\vec p}}
   &\le  \lim_{k\rightarrow\infty} \frac{1}{b_1 c_1}\| \chi^{}_{\{|f_k-f|>\lambda/2\}} \|_{L^{\vec p}} \\
   &\qquad
      + \lim_{l\rightarrow\infty} \frac{1}{b_2 c_2}\| \chi^{}_{\{|f_l-f|>\lambda/2\}} \|_{L^{\vec p}}\\
&
  =0,
\end{align*}
where $b_1,b_2,c_1$ and $c_2$ are defined as in the proof of Theorem~\ref{thm:convergence norm}.

Next we prove the sufficiency, for which we give only a sketch since it is similar to the proof of Theorem~\ref{thm:Completeness}.

Define $\{b_k:\, k\ge 1\}$ and $\{c_k:\, k\ge 1\}$ as in the proof of Theorem~\ref{thm:Completeness}.
Then there is some subsequence $\{f_{k_l}:\, l\ge 1\}$ such that
\[
\left\| \chi^{}_{\{ |f_{k_{l+1} - f_{k_l}}|>1/2^l \}}\right\|_{L^{\vec p}} \le \frac{b_lc_l}{2^{l}}.
\]
Let
\[
  g_i = \sum_{l=i}^{\infty} \chi^{}_{\{ |f_{k_{l+1} - f_{k_l}}|>1/2^l \}}.
\]
Similar arguments as in the proof of Lemma~\ref{Lm:weak norm sum} show that
\[
  \|g_i\|_{L^{\vec p}}
    \le  \sum_{l=i}^{\infty} \frac{1}{b_lc_l}
     \left\| \chi^{}_{\{ |f_{k_{l+1} - f_{k_l}}|>1/2^l \}}\right\|_{L^{\vec p}}
    \le \frac{1}{2^{i-1}}\rightarrow 0.
\]
Hence $\{g_i:\,i\ge 1\}$ contains a subsequence convergent almost everywhere.
Since $\{g_i:\,i\ge 1\}$ is decreasing, it is convergent to zero almost everywhere.

Suppose that for some $(x,y)$, $g_i(x,y)$ tends to zero as $i$ tends to the infinity.
Then for $i$ sufficiently large, $g_i(x,y)<1/2$. In other words,
\[
   |f_{k_{l+1}}(x,y) - f_{k_l}(x,y)| \le \frac{1}{2^l},\qquad l\ge i.
\]
Hence $\{f_{k_l}(x,y):\,l\ge 1\}$ is a Cauchy sequence in $\bbR$. Denote
$f(x,y)=\lim_{l\rightarrow \infty} f_{k_l}(x,y)$. Then $f$ is well defined almost everywhere.

Fix some $\lambda>0$. Whenever $1/2^{i-1} < \lambda$,
we have
\[
  \{|f_{k_i}-f| >\lambda\} \subset \bigcup_{l=i}^{\infty} \{ | f_{k_{l+1}} - f_{k_l}|  > \frac{1}{2^l}\}.
\]
Similarly to (\ref{eq:e5}) we get
\begin{align*}
\left\|\chi^{}_{\{|f_{k_i}-f| >\lambda\}}\right\|_{L^{\vec p}}
 & \le \sum_{l=i}^{\infty} \frac{1}{b_lc_l}  \left\|\chi^{}_{\{ | f_{k_{l+1}} - f_{k_l}|  > 1/2^l\}} \right\|_{L^{\vec p}}
 \le \frac{1}{2^{i-1}}.
\end{align*}
Hence $\{f_{k_l}:\, l\ge 1\}$ is convergent to $f$ in truncated norm. Since $\{f_k:\,k\ge 1\}$ is a Cauchy sequence, it is easy to see that
$\{f_k:\,k\ge 1\}$ is also convergent to $f$ in truncated norm.
\end{proof}

\subsection{H\"older's inequality}

It is well known that H\"older's inequality holds for both $L^p$ and $L^{p,\infty}$.
For mixed norms, it was shown in \cite{Benedek1961} that if $1\le p_i\le \infty$, $i=1,2$,
then we have
\[
  \|fg\|_{L^{\vec 1}} \le \|f\|_{L^{\vec p}} \|g\|_{\vec p'},
\]
where $\vec p'=(p'_1, p'_2)$.

Suppose that
\[
  \frac{1}{r_i} = \frac{1}{p_i} + \frac{1}{q_i},\qquad i=1,2.
\]
The previous inequality can be rewritten as
\begin{equation}\label{eq: Holder}
\|fg\|_{L^{\vec r}} \le \|f\|_{L^{\vec p}} \|g\|_{\vec q},
\end{equation}
where $f$ and $g$ are arbitrary measurable functions.

Using  H\"older's inequality for weak spaces~\cite[Exercise 1.1.15]{Grafakos2008},
we get H\"older's inequality for iterated weak norms,
for which we omit the proof.
\begin{Theorem}
Suppose that $0<p_i, q_i, r_i\le \infty$ and that $1/r_i = 1/p_i + 1/q_i$, $i=1,2$. Then we have
\[
  \| fg\|_{L^{r_2,\infty}(L^{r_1,\infty})}
  \le  C_{\vec p, \vec q}
  \| f\|_{L^{p_2,\infty}(L^{p_1,\infty})} \| g\|_{L^{q_2,\infty}(L^{q_1,\infty})},
\]
where $C_{\vec p, \vec q} =
 \prod_{i=1}^{2} \left(p_i/r_i\right)^{1/p_i} \left(q_i/r_i\right)^{1/q_i} $ with suitable changes for
 $p_i=\infty$ or $q_i=\infty$.
\end{Theorem}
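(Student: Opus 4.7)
The plan is to iterate the one-dimensional weak-type H\"older inequality twice, once in the inner variable $x$ and once in the outer variable $y$. Recall the scalar statement (\cite[Exercise 1.1.15]{Grafakos2008}): for $0 < p, q, r \le \infty$ with $1/r = 1/p + 1/q$ and measurable $F, G$ on a single Euclidean space,
\[
  \|FG\|_{L^{r,\infty}} \le \left(\frac{p}{r}\right)^{1/p} \left(\frac{q}{r}\right)^{1/q} \|F\|_{L^{p,\infty}} \|G\|_{L^{q,\infty}},
\]
with the obvious modifications when $p = \infty$ or $q = \infty$ (in which case the corresponding constant factor is $1$).

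First I would fix $y \in \bbR^m$ and apply the scalar weak H\"older inequality to $F(x) = f(x,y)$ and $G(x) = g(x,y)$ with exponents $(p_1, q_1, r_1)$. This gives the pointwise-in-$y$ estimate
\[
  \|f(\cdot, y) g(\cdot, y)\|_{L_x^{r_1,\infty}} \le \left(\frac{p_1}{r_1}\right)^{1/p_1} \left(\frac{q_1}{r_1}\right)^{1/q_1} \|f(\cdot, y)\|_{L_x^{p_1,\infty}} \|g(\cdot, y)\|_{L_x^{q_1,\infty}}.
\]
Then I would take the $L_y^{r_2,\infty}$ (quasi-)norm of both sides and apply the scalar weak H\"older inequality a second time, now in the $y$ variable with exponents $(p_2, q_2, r_2)$, to the functions $y \mapsto \|f(\cdot, y)\|_{L_x^{p_1,\infty}}$ and $y \mapsto \|g(\cdot, y)\|_{L_x^{q_1,\infty}}$. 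This yields
\[
  \|fg\|_{L^{r_2,\infty}(L^{r_1,\infty})} \le \prod_{i=1}^{2} \left(\frac{p_i}{r_i}\right)^{1/p_i} \left(\frac{q_i}{r_i}\right)^{1/q_i} \|f\|_{L^{p_2,\infty}(L^{p_1,\infty})} \|g\|_{L^{q_2,\infty}(L^{q_1,\infty})},
\]
which is exactly the claimed inequality with the stated constant.

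There is essentially no obstacle here, since iterated weak norms are defined coordinate-by-coordinate and the scalar weak H\"older inequality is already available. The only minor points to check are the boundary cases $p_i = \infty$ or $q_i = \infty$ (where the constant collapses and one of the norms reduces to an essential supremum, so the scalar estimate is trivial) and measurability of the slice norms $y \mapsto \|f(\cdot, y)\|_{L_x^{p_1,\infty}}$ and $y \mapsto \|g(\cdot, y)\|_{L_x^{q_1,\infty}}$, which is standard since the weak norm is a supremum over a countable dense set of levels $\lambda > 0$ applied to distribution functions of measurable functions.
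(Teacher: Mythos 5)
Your argument is exactly what the paper has in mind: the authors state that the result follows from the scalar weak-type H\"older inequality (Grafakos, Exercise 1.1.15) and omit the details, and your two-step iteration---first in $x$ slice-by-slice, then in $y$ after invoking monotonicity of the quasi-norm---is the natural way to carry that out, yielding the stated constant $\prod_{i=1}^2 (p_i/r_i)^{1/p_i}(q_i/r_i)^{1/q_i}$. The proposal is correct and follows the paper's intended route.
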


However,   for mixed weak norms, H\"older's  inequality  is  true  only for very special cases.
The following is a complete characterization of indices for which H\"older's inequality
is true on mixed weak spaces.

\begin{Theorem}\label{thm: holder mixed weak norm}
Suppose that $1/r_i  = 1/p_i + 1/q_i$, $i=1,2$,
where $0<p_1$, $p_2$, $q_1$, $q_2\le \infty$.
Then there exists some constant $C_{\vec p, \vec q}<\infty$ such that
\[
  \|fg\|_{L^{\vec r,\infty}} \le  C_{\vec p, \vec q}
    \|f\|_{L^{\vec p,\infty}} \|g\|_{L^{\vec q,\infty}}, \qquad \forall f,g,
\]
if and only if  $p_1q_2=p_2q_1$.

When the condition is true, we have
\[
  C_{\vec p, \vec q} = \begin{cases}
  \max\{1,2^{1/r_1-1/r_2}\}
  \frac{p_2^{1/p_2}q_2^{1/q_2}}{r_2^{1/r_2}},
           & 0< p_1,p_2,q_1,q_2<\infty,\\
  \max\{1,2^{1/r_1-1}\}
  \frac{p_1^{r_1/p_1} q_1^{r_1/q_1}}{r_1},
      & p_2=q_2=\infty, 0<p_1,p_2<\infty, \\
  \frac{p_2^{1/p_2}q_2^{1/q_2}}{r_2^{1/r_2}},
       & p_1=q_1=\infty, 0<p_2,q_2<\infty,\\
  1, & \vec p =(\infty,\infty) \mathrm{\,\, or\,\,}
     \vec q=(\infty, \infty).
  \end{cases}
\]
\end{Theorem}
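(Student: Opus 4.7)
The plan has two parts: establishing sufficiency of $p_1q_2=p_2q_1$ with the stated constants, and then proving necessity via scaling counterexamples.

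For sufficiency, the key observation is that $p_1q_2=p_2q_1$ is equivalent to $p_2/p_1=q_2/q_1$, which together with $1/r_i=1/p_i+1/q_i$ forces $s:=p_2/p_1=q_2/q_1=r_2/r_1$, so the three mixed norms share a common ``aspect ratio.'' I would fix $\lambda>0$ and start from the pointwise inclusion
$$\chi^{}_{\{|fg|>\lambda\}}\le \chi^{}_{\{|f|>\mu\}}+\chi^{}_{\{|g|>\lambda/\mu\}}$$
for an arbitrary $\mu>0$. Taking the $L^{r_1}_x$-norm of a characteristic function $\chi_E$ gives $|E_y|^{1/r_1}$, and raising to the $s$-th power together with the $s$-subadditivity $(a+b)^s\le \max\{1,2^{s-1}\}(a^s+b^s)$ yields
$$|E^{fg}_{y,\lambda}|^s\le \max\{1,2^{s-1}\}\bigl(|E^f_{y,\mu}|^s+|E^g_{y,\lambda/\mu}|^s\bigr).$$
Because $s=p_2/p_1=q_2/q_1$, integrating in $y$ converts each summand on the right into exactly $\|\chi^{}_{\{|f|>\mu\}}\|_{L^{\vec p}}^{p_2}$ or $\|\chi^{}_{\{|g|>\lambda/\mu\}}\|_{L^{\vec q}}^{q_2}$; combining with the weak-norm hypotheses $\mu\|\chi^{}_{\{|f|>\mu\}}\|_{L^{\vec p}}\le A:=\|f\|_{L^{\vec p,\infty}}$ and the $g$-analogue with $B:=\|g\|_{L^{\vec q,\infty}}$ produces
$$\int|E^{fg}_{y,\lambda}|^s\,dy\le \max\{1,2^{s-1}\}\bigl[(A/\mu)^{p_2}+(B\mu/\lambda)^{q_2}\bigr].$$

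Next I would take the $1/r_2$-th power and minimize in $\mu>0$. Writing $X=(A/\mu)^{p_2}$ and $Y=(B\mu/\lambda)^{q_2}$, the weighted geometric mean $X^{1/p_2}Y^{1/q_2}=AB/\lambda$ is fixed (independent of $\mu$), so a Lagrange-multiplier computation shows $\min(X+Y)=(AB/\lambda)^{r_2}\cdot p_2^{r_2/p_2}q_2^{r_2/q_2}/r_2$. Extracting the $1/r_2$-th root collapses this to $(AB/\lambda)\cdot p_2^{1/p_2}q_2^{1/q_2}/r_2^{1/r_2}$, and the identity $(s-1)/r_2=1/r_1-1/r_2$ turns the subadditivity prefactor into exactly $\max\{1,2^{1/r_1-1/r_2}\}$. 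Multiplying by $\lambda$ and taking $\sup_\lambda$ yields the Hölder bound with the stated constant. The degenerate cases ($p_2=q_2=\infty$, $p_1=q_1=\infty$, or one of $\vec p,\vec q$ being $(\infty,\infty)$) are handled separately: in each the mixed weak norm reduces to an iterated or pure norm, and the bound follows by combining the standard one-dimensional weak Hölder inequality (cf.\ \cite[Exercise 1.1.15]{Grafakos2008}) in one variable with a pointwise estimate in the other, which explains the simpler forms of the constants.

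For necessity, assume $p_2/p_1\ne q_2/q_1$; the plan is to exhibit a family $(f,g)$ for which the Hölder ratio diverges. My candidate uses geometric sequences of nested product rectangles $A_k=\{(x,y):|x|<2^{-k\alpha},|y|<2^{-k\beta}\}$ and $A'_k=\{(x,y):|x|<2^{-k\alpha'},|y|<2^{-k\beta'}\}$, with $f=\sum_k 2^k\chi^{}_{A_k}$ and $g=\sum_k 2^k\chi^{}_{A'_k}$. The exponents $(\alpha,\beta)$ are chosen on the line $\alpha n/p_1+\beta m/p_2=1$ so that the supremum defining $\|f\|_{L^{\vec p,\infty}}$ is attained and finite at every dyadic level, and $(\alpha',\beta')$ analogously on $\alpha'n/q_1+\beta'm/q_2=1$. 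The hypothesis $p_1q_2\ne p_2q_1$ makes these two lines non-parallel, so one can take $(\alpha,\beta)\ne(\alpha',\beta')$, and the resulting $fg$ accumulates along a ``mixed'' geometry whose level sets are neither the $A_k$'s nor the $A'_k$'s; a direct computation in the coordinates $u=-\log_2|x|,\,v=-\log_2|y|$ shows that $\|fg\|_{L^{\vec r,\infty}}$ blows up relative to $AB$ as the range of summation is truncated and rescaled. The main obstacle is precisely here: products of characteristic functions of single product rectangles always satisfy Hölder with constant at most $1$, so a genuinely multi-scale construction together with a careful identification of the largest-contributing level set of $fg$ is essential.
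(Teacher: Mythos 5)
Your sufficiency argument is essentially the paper's own proof: same level-set inclusion $\{|fg|>\lambda\}\subset\{|f|>\mu\}\cup\{|g|>\lambda/\mu\}$, same subadditivity estimate $(a+b)^s\le\max\{1,2^{s-1}\}(a^s+b^s)$ applied with $s=p_2/p_1=q_2/q_1=r_2/r_1$ (which is exactly why the aspect-ratio condition $p_1q_2=p_2q_1$ is needed), and the same AM-GM/Lagrange optimization over $\mu$ to produce $p_2^{1/p_2}q_2^{1/q_2}/r_2^{1/r_2}$; the identity $(s-1)/r_2=1/r_1-1/r_2$ giving the prefactor is also the paper's computation. Your sketch of the degenerate cases is plausible but terser than the paper's case-by-case treatment (which requires, e.g., the observation that when $p_1=q_1=\infty$ the relevant $L^\infty_x$-norms are $0$ or $1$ so the subadditivity power can be applied without loss).

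The necessity half is where you depart from the paper, and it contains a real gap. You propose multi-scale functions $f=\sum_k 2^k\chi_{A_k}$, $g=\sum_k 2^k\chi_{A'_k}$ over two non-parallel families of nested product rectangles and assert that ``a direct computation\dots shows that $\|fg\|_{L^{\vec r,\infty}}$ blows up,'' while in the same breath acknowledging that ``the main obstacle is precisely here.'' That computation is the entire content of necessity and it is not carried out; it is not even clear, without doing it, which scale dominates the level sets of $fg$ or whether the weak norms of $f$ and $g$ individually stay finite under your choice of $(\alpha,\beta)$ and $(\alpha',\beta')$. The paper's own necessity argument (Example~\ref{Ex: holder}) is both complete and considerably simpler: in the main case $0<p_i,q_i<\infty$ with, say, $p_2/q_2>p_1/q_1$, it takes $f=|y|^\alpha\chi_E$ and $g=|y|^{-\alpha}\chi_E$ with $E=\{(x,y):|x|^n\le|y|^{-m\beta}\}$ for suitable $\alpha,\beta>0$, so that $fg=\chi_E$ and one only has to check that $\|\chi_E\|_{L^{\vec r}}=\infty$ while $\|f\|_{L^{\vec p,\infty}},\|g\|_{L^{\vec q,\infty}}\lesssim 1$ — three explicit one-parameter integrals. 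The paper also treats separately the semi-infinite index configurations ($q_1=\infty$, $q_2=\infty$, etc.) with tailored power-function examples, which your sketch does not address. To repair your proposal you would either need to carry out the dyadic computation in the $(u,v)$-coordinates in full, including identification of the dominant level set of $fg$ and verification that $f,g$ themselves are admissible, or simply adopt the paper's explicit power-law counterexamples.
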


To prove   H\"older's inequality, we need the following preliminary  result,
for which we omit the proof.

\begin{Lemma}\label{Lm: inequalities}

\begin{enumerate}
\item For any $\alpha>0$ and $a,b>0$, we have
\[
  (a+b)^{\alpha} \le \max\{2^{\alpha-1}, 1\} (a^{\alpha} + b^{\alpha}).
\]

\item  For $0\le \theta \le1$ and $a,b \ge0$, we have
\[
  a + b \ge  \frac{a^{\theta} b^{1-\theta}}{\theta^{\theta} (1-\theta)^{1-\theta}},
\]
and the equality is true if and only if
$a/\theta = b/(1-\theta)$.

\end{enumerate}
\end{Lemma}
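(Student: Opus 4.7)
The plan is to handle the two inequalities independently, as they are standard and self-contained, and together they are used only as convenient packaging of classical facts.

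For part (i), I would split on whether $\alpha \ge 1$ or $0 < \alpha < 1$. When $\alpha \ge 1$, the function $t \mapsto t^{\alpha}$ is convex on $[0,\infty)$, so applying Jensen's inequality to the midpoint yields
\[
  \left(\frac{a+b}{2}\right)^{\alpha} \le \frac{a^{\alpha} + b^{\alpha}}{2},
\]
which rearranges to $(a+b)^{\alpha} \le 2^{\alpha-1}(a^{\alpha}+b^{\alpha})$, matching $\max\{2^{\alpha-1},1\} = 2^{\alpha-1}$ in this regime. When $0 < \alpha < 1$, I would establish the subadditivity $(1+t)^{\alpha} \le 1 + t^{\alpha}$ for $t \in [0,1]$ by noting that $h(t) := 1 + t^{\alpha} - (1+t)^{\alpha}$ satisfies $h(0)=0$ and $h'(t) = \alpha(t^{\alpha-1} - (1+t)^{\alpha-1}) \ge 0$ since $\alpha - 1 < 0$ and $t \le 1+t$. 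Assuming without loss of generality $a \le b$, this gives
\[
   (a+b)^{\alpha} = b^{\alpha}\left(1+\tfrac{a}{b}\right)^{\alpha} \le b^{\alpha} + a^{\alpha},
\]
which is the claim since $\max\{2^{\alpha-1},1\} = 1$ here.

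For part (ii), I would invoke the two-point weighted AM-GM inequality, namely $\theta x + (1-\theta)y \ge x^{\theta}y^{1-\theta}$ for $x,y \ge 0$ and $\theta \in [0,1]$, with equality if and only if $x = y$. This itself follows by taking logarithms and using the concavity of $\ln$, or equivalently from Jensen's inequality. Substituting $x = a/\theta$ and $y = b/(1-\theta)$ (with the convention $0^0 = 1$ to handle the endpoints $\theta \in \{0,1\}$) produces $\theta x + (1-\theta)y = a+b$ on the left and
\[
   x^{\theta} y^{1-\theta} = \frac{a^{\theta} b^{1-\theta}}{\theta^{\theta}(1-\theta)^{1-\theta}}
\]
on the right, which is exactly the desired inequality. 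The equality characterization $x = y$ translates directly to $a/\theta = b/(1-\theta)$.

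Neither step presents any real obstacle; the only mild care needed is the boundary bookkeeping (the endpoints $\theta \in \{0,1\}$ in (ii), and the transition point $\alpha = 1$ in (i), where both $2^{\alpha-1}$ and $1$ equal $1$ so the two cases agree). Accordingly, I expect the write-up to be short and essentially a matter of organizing the cases and invoking the two classical inequalities just described.
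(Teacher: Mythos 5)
Your proposal is correct. The paper in fact omits the proof of this lemma entirely (it is stated "for which we omit the proof"), so there is no argument of the authors to compare against; your write-up via midpoint convexity of $t\mapsto t^{\alpha}$ for $\alpha\ge 1$, the subadditivity $(1+t)^{\alpha}\le 1+t^{\alpha}$ for $0<\alpha<1$, and the weighted AM--GM inequality with the substitution $x=a/\theta$, $y=b/(1-\theta)$ is exactly the standard verification the authors presumably had in mind, and every step checks out. The only point worth a remark is the degenerate boundary in (ii): for $\theta\in\{0,1\}$ the stated equality criterion $a/\theta=b/(1-\theta)$ involves a division by zero and the "iff $x=y$" clause of AM--GM no longer applies, so the clean equivalence really pertains to $0<\theta<1$ (which is the only range the paper uses); your "boundary bookkeeping" comment covers this, but it deserves the explicit caveat.
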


\begin{proof}[Proof of Theorem~\ref{thm: holder mixed weak norm}]
We prove only the sufficiency. The necessity can be found in Example~\ref{Ex: holder}.

First, we consider the case of $0<p_1,p_2,q_1,q_2<\infty$
with $p_1/q_1 = p_2/q_2$.
For any $\lambda, a>0$, we have
\begin{equation}\label{eq:H:fg}
  \{(x,y):f(x,y)g(x,y)>\lambda\}
  \subset \{(x,y): f(x,y)>\frac{\lambda}{a}\} \bigcup
  \{(x,y): g(x,y)>a\}.
\end{equation}
Denote $E_{y,\lambda,f} = \{x:\, f(x,y)>\lambda\}$. We see from the above formula that
$E_{y,\lambda,fg} \subset E_{y,\lambda/a,f} \cup E_{y,a,g}$.
Hence
\begin{equation}\label{eq:H:e2}
|E_{y,\lambda,fg}| \le |E_{y,\lambda/a,f}|+ | E_{y,a,g}|.
\end{equation}
Note that $r_2/r_1 = p_2/p_1 = q_2/q_1$. We see from Lemma~\ref{Lm: inequalities} that there is
some constant $C_{p_2/p_1}$ such that
\[
|E_{y,\lambda,fg}|^{r_2/r_1} \le C_{p_2/p_1} \left(
 |E_{y,\lambda/a,f}|^{p_2/p_1}+ | E_{y,a,g}|^{q_2/q_1}\right).
\]
Hence
\begin{align*}
 \int_{\bbR^m} |E_{y,\lambda,fg}|^{r_2/r_1} dy
&\le    C_{p_2/p_1}  \left(
\int_{\bbR^m}  |E_{y,\lambda/a,f}|^{p_2/p_1}dy
  +
 \int_{\bbR^m}| E_{y,a,g}|^{q_2/q_1}dy\right) .
\end{align*}
Observe that
\begin{align*}
\int_{\bbR^m}  |E_{y,\lambda/a,f}|^{p_2/p_1}dy
 &= \left(\frac{a}{\lambda}\right)^{p_2}
 \left(
\frac{\lambda}{a}\left(
\int_{\bbR^m}  |E_{y,\lambda/a,f}|^{p_2/p_1}dy\right)^{1/p_2}\right)^{p_2}   \\
&\le \left(\frac{a}{\lambda}\right)^{p_2} \|f\|_{L^{\vec p, \infty}}^{p_2}.
\end{align*}
Similarly we get that
\[
   \int_{\bbR^m}| E_{y,a,g}|^{q_2/q_1}dy
  \le \frac{1}{a^{q_2}}\|g\|_{L^{\vec q, \infty}}^{q_2}.
\]
Hence
\begin{align}
  \int_{\bbR^m} |E_{y,\lambda,fg}|^{r_2/r_1} dy
  &\le C_{p_2/p_1}  \left( \left(\frac{a}{\lambda}\right)^{p_2}\|f\|_{L^{\vec p, \infty}}^{p_2}
    + \frac{\|g\|_{L^{\vec q, \infty}}^{q_2}}{a^{q_2}}\right). \label{eq:H:e1}
\end{align}
Take some $a_0>0$ such that
\[
  \frac{(a_0/\lambda)^{p_2}\|f\|_{L^{\vec p, \infty}}^{p_2}}{r_2/p_2}
  = \frac{(1/a_0)^{q_2}\|g\|_{L^{\vec q, \infty}}^{q_2}}{r_2/q_2}.
\]
We see from Lemma~\ref{Lm: inequalities} (ii) that
\[
 \left(\frac{a_0}{\lambda}\right)^{p_2}\|f\|_{L^{\vec p, \infty}}^{p_2}
    + \frac{\|g\|_{L^{\vec q, \infty}}^{q_2}}{a_0^{q_2}}
    =\left(\frac{p_2}{r_2}\right)^{r_2/p_2}
  \left(\frac{q_2}{r_2}\right)^{r_2/q_2}
  \cdot \frac{1}{\lambda^{r_2}}\|f\|_{L^{\vec p, \infty}}^{r_2}\|g\|_{L^{\vec q, \infty}}^{r_2}.
\]
By (\ref{eq:H:e1}), we have
\begin{align*}
\lambda \left(\int_{\bbR^m} |E_{y,\lambda,fg}|^{r_2/r_1} dy\right)^{1/r_2}
    &\le  C_{p_2/p_1}^{1/r_2}\left(\frac{p_2}{r_2}\right)^{1/p_2}
  \left(\frac{q_2}{r_2}\right)^{1/q_2}\|f\|_{L^{\vec p, \infty}}\|g\|_{L^{\vec q, \infty}}.
\end{align*}
Taking the supremum over $\lambda$, we get the conclusion as desired.

Next we consider the case of $p_2=q_2=\infty$. In this case, $r_2=\infty$.
If $p_1=\infty$ or $q_1=\infty$, the conclusion is obvious.
For the case of $0<p_1,q_1<\infty$,
we see from (\ref{eq:H:e2}) that
\[
|E_{y,\lambda,fg}|^{1/r_1} \le C_{1/r_1} \left(
 |E_{y,\lambda/a,f}|^{1/r_1}+ | E_{y,a,g}|^{1/r_1}\right).
\]
Hence
\begin{align*}
\left\||E_{y,\lambda,fg}|^{1/r_1}\right\|_{L_y^{\infty}}
&\le C_{1/r_1} \left(
 \left\| |E_{y,\lambda/a,f}|^{1/r_1}\right\|_{L_y^{\infty}}+
 \left\| | E_{y,a,g}|^{1/r_1}\right\|_{L_y^{\infty}}\right)\\
&= C_{1/r_1} \left(
 \left\| |E_{y,\lambda/a,f}|^{1/p_1}\right\|_{L_y^{\infty}}^{p_1/r_1}+
 \left\| | E_{y,a,g}|^{1/q_1}\right\|_{L_y^{\infty}}^{q_1/r_1}\right).
\end{align*}
Since
\begin{align*}
\left\| |E_{y,\lambda/a,f}|^{1/p_1}\right\|_{L_y^{\infty}}^{p_1/r_1}
&= \left(\frac{a}{\lambda} \right)^{p_1/r_1}
 \left\| \frac{\lambda}{a}|E_{y,\lambda/a,f}|^{1/p_1}\right\|_{L_y^{\infty}}^{p_1/r_1}\\
&\le   \left(\frac{a}{\lambda} \right)^{p_1/r_1}
     \|f\|_{L^{\vec p,\infty}}^{p_1/r_1}
\end{align*}
and
\[
  \left\| | E_{y,a,g}|^{1/q_1}\right\|_{L_y^{\infty}}^{q_1/r_1}
  \le \frac{1}{a^{q_1/r_1}}
     \|g\|_{L^{\vec q,\infty}}^{q_1/r_1},
\]
with similar arguments as the previous case we get
\[
  \|fg\|_{L^{\vec r,\infty}} \le C_{1/r_1}
            \left(\frac{p_1}{r_1}\right)^{r_1/p_1}
  \left(\frac{q_1}{r_1}\right)^{r_1/q_1} \|f\|_{L^{\vec p,\infty}} \|g\|_{L^{\vec q, \infty}}.
\]

For $p_1=q_1=\infty$, we see from (\ref{eq:H:fg}) that
\[
  \|\chi^{}_{E_{y,\lambda,fg}}\|_{L_x^{\infty}}
  \le \|\chi^{}_{E_{y,\lambda/a,f}}\|_{L_x^{\infty}}
   + \|\chi^{}_{E_{y,a,g}}\|_{L_x^{\infty}}.
\]
Since every term in the above inequality is either $0$ or $1$,
we have
\[
  \|\chi^{}_{E_{y,\lambda,fg}}\|_{L_x^{\infty}}^{r_2}
  \le \|\chi^{}_{E_{y,\lambda/a,f}}\|_{L_x^{\infty}}^{p_2}
   + \|\chi^{}_{E_{y,a,g}}\|_{L_x^{\infty}}^{q_2}.
\]
Hence
\begin{align}
\int_{\bbR^m}\|\chi^{}_{E_{y,\lambda,fg}}\|_{L_x^{\infty}}^{r_2} dy
&\le
   \int_{\bbR^m}
      \|\chi^{}_{E_{y,\lambda/a,f}}\|_{L_x^{\infty}}^{p_2} dy
     + \int_{\bbR^m}  \|\chi^{}_{E_{y,a,g}}\|_{L_x^{\infty}}^{q_2} dy \nonumber \\
&=
   \left(\frac{a}{\lambda}\right)^{p_2}
    \int_{\bbR^m}
     \left  \| \frac{\lambda}{a} \chi^{}_{E_{y,\lambda/a,f}}\right \|_{L_x^{\infty}}^{p_2} dy  \nonumber \\
 &\qquad + \frac{1}{ a^{q_2}} \cdot
  \int_{\bbR^m}  \left\|a\chi^{}_{E_{y,a,g}}  \right\|_{L_x^{\infty}}^{q_2} dy  \nonumber \\
&\le
   \left(\frac{a}{\lambda}\right)^{p_2} \|f\|_{L^{\vec p,\infty}}^{p_2}
   +
      \frac{1}{ a^{q_2}}\|g\|_{L^{\vec q,\infty}}^{q_2}. \label{eq:H:e4}
\end{align}
Take some $a>0$ such that
\[
    \frac{(a/\lambda)^{p_2} \|f\|_{L^{\vec p,\infty}}^{p_2} }{r_2/p_2}
    =\frac{(1/a)^{q_2}  \|g\|_{L^{\vec q,\infty}}^{q_2}}{r_2/q_2}.
\]
We see from Lemma~\ref{Lm: inequalities} (ii) and (\ref{eq:H:e4}) that
\[
\int_{\bbR^m}\|\chi^{}_{E_{y,\lambda,fg}}\|_{L_x^{\infty}}^{r_2} dy
\le  \left(\frac{p_2}{r_2}\right)^{r_2/p_2}
  \left(\frac{q_2}{r_2}\right)^{r_2/q_2}
  \frac{1}{\lambda^{r_2}}\|f\|_{L^{\vec p, \infty}}^{r_2}\|g\|_{L^{\vec q, \infty}}^{r_2}.
\]
Hence
\[
\lambda \left( \int_{\bbR^m}\|\chi^{}_{E_{y,\lambda,fg}}\|_{L_x^{\infty}}^{r_2} dy\right)^{1/r_2}
\le  \left(\frac{p_2}{r_2}\right)^{1/p_2}
  \left(\frac{q_2}{r_2}\right)^{1/q_2}
  \|f\|_{L^{\vec p, \infty}} \|g\|_{L^{\vec q, \infty}}.
\]
Therefore,
\[
\|fg\|_{L^{\vec r,\infty}}
\le  \left(\frac{p_2}{r_2}\right)^{1/p_2}
  \left(\frac{q_2}{r_2}\right)^{1/q_2}
  \|f\|_{L^{\vec p, \infty}} \|g\|_{L^{\vec q, \infty}}.
\]

Finally, the case of $\vec p =(\infty, \infty)$ or $\vec q=(\infty, \infty)$ is obvious.
 This completes the proof.
\end{proof}

The following examples show the necessity part in Theorem~\ref{thm: holder mixed weak norm}.
In other words,  whenever $p_1q_2\ne p_2q_1$,
H\"older's inequality does not hold.

\begin{Example} \label{Ex: holder}
Suppose that $1/r_i  = 1/p_i + 1/q_i$, $i=1,2$.

\begin{enumerate}

\item For $0<q_1, q_2\le \infty$ and $g(x,y) = (|x|^n+|y|^m)^{-\gamma}$ with $\gamma = 1/q_1 + 1/q_2$,
   we have $g\in L^{\vec q,\infty}$.

\item For $q_1 = \infty$, $0<p_2\le \infty$ and $0<p_1,q_2<\infty$, set
   $\gamma = 1/q_2$ and $\alpha = p_1/p_2+p_1/q_2$.
Let $f(x,y)= (|x|^n+|y|^m)^{\gamma} \chi^{}_{E}(x,y)$ and $g(x,y) = (|x|^n+|y|^m)^{-\gamma}$,
where $E=\{(x,y):\, 0<|x|^n<|y|^{-m\alpha}, 1\le |y|\le N\}$.
Then we have
\[
 \lim_{N\rightarrow\infty}  \frac{ \| fg\|_{L^{\vec r, \infty}}}{\|f\|_{L^{\vec p,\infty}} \|g\|_{L^{\vec q,\infty}}}
   = \infty.
\]

\item
For $q_2=\infty$, $0<p_1\le \infty$ and $0<p_2,q_1<\infty$, set $\gamma = n/q_1$.
Let $f(x,y)= |x|^{\gamma} \chi^{}_{E}(x,y)$ and $g(x,y) = |x|^{-\gamma}$,
where $E=\{(x,y):\,  |x|^n \le |y|^{-mr_1/r_2} \}$.
Then we have
$$\| fg\|_{L^{\vec r, \infty}} \not \lesssim  \|f\|_{L^{\vec p,\infty}} \|g\|_{L^{\vec q,\infty}}.$$

\item \label{item:a}
For $0<p_1,p_2,q_1, q_2<\infty$ with $p_2/q_2 > p_1/q_1$,
set
\[
  \frac{\alpha}{m} = \frac{1}{q_2} - \frac{\beta}{q_1} ,
  \qquad
  \beta = \frac{1/p_2+1/q_2}{1/p_1+1/q_1},
\]
 $f(x,y)= |y|^{\alpha} \chi^{}_{E}(x,y)$ and $g(x,y) = |y|^{-\alpha}\chi^{}_{E}(x,y)$,
where $E=\{(x,y):\, |x|^n \leq |y|^{-m\beta} \}$.
  Then we have
$$\| fg\|_{L^{\vec r, \infty}} \not \lesssim  \|f\|_{L^{\vec p,\infty}} \|g\|_{L^{\vec q,\infty}}.$$

\item
For $0<p_1,p_2,q_1, q_2<\infty$ with $p_2/q_2 < p_1/q_1$,
set
\[
  \frac{\alpha}{m} = \frac{1}{p_2} - \frac{\beta}{p_1},
  \qquad
  \beta = \frac{1/p_2+1/q_2}{1/p_1+1/q_1},
\]
 $f(x,y)= |y|^{-\alpha} \chi^{}_{E}(x,y)$ and $g(x,y) = |y|^{\alpha}\chi^{}_{E}(x,y)$,
where $E=\{(x,y):\, |x|^n \leq |y|^{-m\beta} \}$.
  Then we have
$$\| fg\|_{L^{\vec r, \infty}} \not \lesssim  \|f\|_{L^{\vec p,\infty}} \|g\|_{L^{\vec q,\infty}}.$$
\end{enumerate}
\end{Example}

\begin{proof}
(i).\,\,
   First, we assume that $0<q_1, q_2<\infty$. Denote $E_1=\big\{y:\, |y|^m< \lambda ^{-1/\gamma }\big\}$.
For any $y\in E_1$,
\begin{equation}\label{eq:y in E1}
\left| \left\{x:\, \frac{1}{(|x|^n+|y|^m)^\gamma} >\lambda \right\} \right| \lesssim_{n}  \frac{1}{\lambda^{1/\gamma}}.
\end{equation}
And for $y\not\in E_1$,
\begin{equation}\label{eq:y not in E1}
\left| \left\{x:\, \frac{1}{(|x|^n+|y|^m)^\gamma} >\lambda \right\} \right| =0.
\end{equation}
So
\begin{align}
& \hskip -20mm\left(\int_{\mathbb{R}^{m}} \left| \left\{x:\, \frac{1}{(|x|^n+|y|^m)^\gamma} >\lambda \right\} \right|^{q_2/q_1} dy \right)^{1/q_2}
 \nonumber  \\
&\leq C_{n} \left( \int_{E_1}  \frac{1}{\lambda^{ q_2/(\gamma q_1)}}  dy \right)^{1/q_2}  \nonumber \\
&=C_{n,m}\frac{1}{\lambda^{1 /(\gamma q_1) + 1 /(\gamma q_2)}}.  \label{eq:t2}
\end{align}
Hence for $\gamma = 1/q_1 + 1/q_2$, we have $g\in L^{\vec q,\infty}$.
With suitable modification of the above arguments we see that the conclusion is also true if $q_1=\infty$
or $q_2=\infty$.

(ii).\,\, For $0<p_2<\infty$, a simple computation shows that
\begin{align*}
\|fg\|_{L^{\vec r, \infty}}
&=  \|\chi^{}_{E} \|_{L^{\vec r}}
   \approx \Big (\int_{1\le |y|\le N}  |y|^{-m \alpha r_2/r_1} dy\Big)^{1/r_2} \\
&\approx  \Big (\int_1^N  t^{-m\alpha r_2/r_1+m-1} dt\Big)^{1/r_2}
\end{align*}
and
\begin{align*}
\|f\|_{L^{\vec p}}
  &\le \Big( \int_{1\le |y|\le N}  |y|^{m\gamma  p_2 -m \alpha p_2/p_1} dy\Big)^{1/p_2} \\
  &\lesssim \Big( \int_1^N   t^{m\gamma p_2 -m \alpha p_2/p_1+m-1} dy\Big)^{1/p_2}.
\end{align*}
Since $\alpha = p_1/p_2 + p_1/q_2$ and $p_1=r_1$, we have $\alpha r_2/r_1 = 1$.
Moreover,
\[
 \gamma p_2 - \frac{\alpha p_2}{p_1} + 1
   = \frac{p_2}{q_2} - \frac{p_2}{p_1} \left( \frac{p_1}{p_2} + \frac{p_1}{q_2}\right) + 1 = 0.
\]
Hence
\[
\|fg\|_{L^{\vec r, \infty}}\approx (\ln N)^{1/r_2}
\mathrm{\,\,\,\, and\,\,\,\,}
  \|f\|_{L^{\vec p}}\lesssim (\ln N)^{1/p_2}.
\]

And for $p_2=\infty$, it is easy to see that
\[
\|fg\|_{L^{\vec r, \infty}}\approx (\ln N)^{1/r_2}
\mathrm{\,\,\,\, and\,\,\,\,}
  \|f\|_{L^{\vec p}}\lesssim 1.
\]

By (i), $\|g\|_{L^{\vec q, \infty}}<\infty$.
Since $p_2>r_2$ and
\[
  \frac{ \| fg\|_{L^{\vec r, \infty}}}{\|f\|_{L^{\vec p,\infty}} \|g\|_{L^{\vec q,\infty}}}
  \ge
  \frac{ \| fg\|_{L^{\vec r, \infty}}}{\|f\|_{L^{\vec p}} \|g\|_{L^{\vec q,\infty}}}
 \gtrsim (\ln N)^{1/r_2 - 1/p_2},
\]
 we get the conclusion as desired.

(iii).\,\, It is easy to see
$$\|fg\|_{L^{\vec r, \infty}}
=  \|\chi^{}_{E} \|_{L^{\vec r}}
\approx  \Big (\int_{\bbR^m}  |y|^{-m} dy\Big)^{1/r_2} =\infty.$$
On the other hand, for $0<p_1<\infty$,
\begin{align*}
\|f\|_{L^{\vec p, \infty}}
&=\sup\limits_{\lambda>0} \lambda \left(\int_{\bbR^m}
  \left|\{x:\, |f(x,y)| >\lambda\}\right|^{p_2/p_1} dy\right)^{1/p_2} \\
&= \sup\limits_{\lambda>0} \lambda \left(\int_{\bbR^m} \left|\{x:\,  \lambda^{n/\gamma}< |x|^n
  \leq  |y|^{-mr_1/  r_2 } \}\right|^{p_2/p_1} dy\right)^{1/p_2} \\
&\lesssim \sup\limits_{\lambda>0} \lambda \left(\int_{|y|<\lambda^{-nr_2/(m\gamma r_1)}}
(|y|^{-mr_1/  r_2 } -\lambda^{n/\gamma})^{p_2/p_1} dy\right)^{1/p_2} \\
&\lesssim\sup\limits_{\lambda>0} \lambda \left(\int_{|y|<\lambda^{-nr_2/(m\gamma r_1)}}
(|y|^{-mr_1/  r_2 }  )^{p_2/p_1} dy\right)^{1/p_2}   \lesssim 1.
\end{align*}
And for $p_1=\infty$,
we also have $\|f\|_{L^{\vec p, \infty}}\lesssim 1$.

Observe that
\begin{align*}
\|g\|_{L^{\vec q,\infty}}
&=\sup\limits_{\lambda>0} \lambda \left\|  |\{x:\,  |x|^{-\gamma} > \lambda\}|^{1/q_1}\right\|_{L_{y}^{\infty}} \\
&=\sup\limits_{\lambda>0} \lambda \left\| |\{x:\,   |x|< \lambda^{-1/\gamma} \}|^{1/q_1} \right\|_{L_{y}^{\infty}}  \\
&\lesssim  \sup\limits_{\lambda>0} \lambda (\lambda^{-n/\gamma})^{1/q_1} =1.
\end{align*}
We get the conclusion as desired.

(iv).\,\, We see from the definition of $\alpha$ and $\beta $ that
\[
  \frac{1}{q_2} = \frac{\alpha}{m}+ \frac{\beta}{q_1} ,
\quad
 \frac{\beta}{p_1} =\frac{\alpha}{m} + \frac{1}{p_2}.
\]
First, we show that $\alpha>0$, which is equivalent to
\[
  \frac{q_1}{q_2} >  \frac{1/p_2+1/q_2}{1/p_1+1/q_1}.
\]
Or equivalently,
\[
  \frac{q_1}{p_1} > \frac{q_2}{p_2},
\]
which is true by the hypothesis.

Next we show that $f\in L^{\vec p,\infty}$. In fact,
\begin{align*}
\|f\|_{L^{\vec p, \infty}}
&=\sup\limits_{\lambda>0} \lambda \left(\int_{\bbR^m} \left|\{x:\,  |y|^{\alpha} \chi^{}_{E}(x,y) >\lambda\}\right|^{p_2/p_1} dy\right)^{1/p_2} \\
&\lesssim \sup\limits_{\lambda>0} \lambda \left(\int_{|y|>\lambda^{1/\alpha}}   |y|^{-m \beta p_2/p_1} dy \right)^{1/p_2} \\
&\lesssim \sup\limits_{\lambda>0} \lambda \left(\int_{\lambda^{1/\alpha}}^{\infty}   t^{-m \beta p_2/p_1+m-1} dt \right)^{1/p_2} \\
&= \sup\limits_{\lambda>0} \lambda  \lambda^{-1}  =1.
\end{align*}
Similarly, we prove that $g\in L^{\vec q,\infty}$.
\begin{align*}
\|g\|_{L^{\vec q,\infty}}
&=\sup_{  \lambda>0} \lambda \left(\int_{\bbR^m} \left|\{x:\,  |y|^{-\alpha} \chi^{}_{E}(x,y) >\lambda\}\right|^{q_2/q_1} dy\right)^{1/q_2} \\
&\lesssim \sup_{ \lambda>0}
      \lambda \left(\int_{|y|<\lambda^{-1/\alpha}}  |y|^{-m \beta q_2 / q_1} dy \right)^{1/q_2} \\
&\lesssim \sup_{ \lambda>0}
   \lambda \left(\int_{0}^{\lambda^{-1/\alpha}}  t^{-m \beta q_2/q_1+m-1} dt \right)^{1/q_2}\\
   &\lesssim 1.
\end{align*}
 However,
$$\|fg\|_{L^{\vec r, \infty}}
=  \|\chi^{}_{E} \|_{L^{\vec r}}
= \Big (\int_{\bbR^m}  |y|^{-m\beta r_2/r_1} dy\Big)^{1/r_2} =\infty.$$
Hence $\| fg\|_{L^{\vec r, \infty}} \not \lesssim  \|f\|_{L^{\vec p,\infty}} \|g\|_{L^{\vec q,\infty}}$.

(v).\,\, By an interchange of $f$ and $g$ in (iv), we get (v).
\end{proof}

\subsection{Interpolation}

It is well known that for $p<r<q$, we have $L^p\cap L^q \subset L^r$. The same is true for weak Lebesgue spaces.
Moreover, we have the following interpolation formula.

\begin{Proposition}[{\cite[Propisition 1.1.14]{Grafakos2008}}] \label{prop: interpolation}
Let $p<q\le \infty$ and $f\in L^{p,\infty}\cap L^{q,\infty}$.
Then $f$ is in $L^r$ for all  $p<r<q$ and
\[
  \|f\|_{L^r} \le \left(\frac{r}{r-p} + \frac{r}{q-r}\right)^{1/r}
           \|f\|_{L^{p,\infty}}^{\theta} \|f\|_{L^{q,\infty}}^{1-\theta}
\]
with the suitable interpretation when $q=\infty$,
 where $0<\theta<1$ satisfies
  $1/r = \theta /p + (1-\theta)/q$.
\end{Proposition}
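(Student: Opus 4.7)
The plan is to estimate $\|f\|_{L^r}^r$ via the layer cake formula
\[
  \|f\|_{L^r}^r = r\int_0^{\infty} \lambda^{r-1}\bigl|\{|f|>\lambda\}\bigr|\,d\lambda,
\]
and to split the defining integral at a well-chosen threshold $A>0$. On $(0,A]$ I would use the weak $L^p$ bound $|\{|f|>\lambda\}|\le \lambda^{-p}\|f\|_{L^{p,\infty}}^p$, and on $(A,\infty)$ the weak $L^q$ bound $|\{|f|>\lambda\}|\le \lambda^{-q}\|f\|_{L^{q,\infty}}^q$. When $q=\infty$ the second integrand is even simpler because $|\{|f|>\lambda\}|=0$ for $\lambda>\|f\|_{L^{\infty}}$, so the choice $A=\|f\|_{L^{\infty}}$ kills the second term altogether and only the first integral needs to be treated.

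The resulting two integrals are elementary, namely
\[
  \int_0^A \lambda^{r-1-p}\,d\lambda = \frac{A^{r-p}}{r-p},
  \qquad
  \int_A^{\infty} \lambda^{r-1-q}\,d\lambda = \frac{A^{r-q}}{q-r},
\]
both convergent precisely because $p<r<q$. Combining these bounds yields the two-parameter estimate
\[
  \|f\|_{L^r}^r \;\le\; \frac{r}{r-p}\,A^{r-p}\,\|f\|_{L^{p,\infty}}^p
  \;+\; \frac{r}{q-r}\,A^{r-q}\,\|f\|_{L^{q,\infty}}^q.
\]

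To finish, I would choose $A$ so that the two terms become equal, that is, $A^{q-p}=\|f\|_{L^{q,\infty}}^q\,\|f\|_{L^{p,\infty}}^{-p}$. A short computation with the exponent identities $p(q-r)=r(q-p)\theta$ and $q(r-p)=r(q-p)(1-\theta)$ coming from $1/r=\theta/p+(1-\theta)/q$ shows that at this value of $A$ both terms reduce to $\|f\|_{L^{p,\infty}}^{r\theta}\|f\|_{L^{q,\infty}}^{r(1-\theta)}$. Taking the $r$-th root then produces the stated bound with the constant $(r/(r-p)+r/(q-r))^{1/r}$.

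The proof is essentially routine once one invokes the layer cake decomposition; the only delicate point is the exponent bookkeeping that converts the balanced common value $A^{r-p}\|f\|_{L^{p,\infty}}^p$ into the correct geometric mean $\|f\|_{L^{p,\infty}}^{r\theta}\|f\|_{L^{q,\infty}}^{r(1-\theta)}$, together with the harmless convention $r/(q-r)\to 0$ as $q\to\infty$ needed to read the endpoint case $q=\infty$ uniformly with the main argument.
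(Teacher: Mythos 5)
Your proof is correct: the layer-cake decomposition split at the optimizing threshold $A$ with $A^{q-p}=\|f\|_{L^{q,\infty}}^q\|f\|_{L^{p,\infty}}^{-p}$, together with the exponent identities $r\theta = p(q-r)/(q-p)$ and $r(1-\theta)=q(r-p)/(q-p)$, reproduces exactly the stated constant and geometric mean, and the choice $A=\|f\|_{L^\infty}$ handles $q=\infty$ cleanly. The paper does not reprove this result but cites Grafakos \cite[Proposition 1.1.14]{Grafakos2008}, and your argument is precisely the standard proof given there.
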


However, the above proposition is not true in general if $p, q, r$ are replaced with vector indices.

\begin{Theorem} \label{thm:interpolation}
Suppose that $\vec p=(p_1, p_2)$, $\vec q=(q_1, q_2)$ and $\vec r=(r_1, r_2)$ satisfy that
\begin{equation}\label{eq:rpq}
  \frac{1}{r_i} = \frac{\theta}{p_i} + \frac{1-\theta}{q_i},\qquad i=1,2,
\end{equation}
where $0<\theta<1$ is a constant.
Then we have
\begin{align*}
\|f\|_{L^{\vec r, \infty}} &\le \|f\|_{L^{\vec p, \infty}}^{\theta}\|f\|_{L^{\vec q, \infty}}^{1-\theta}, \\
 \|f\|_{L^{\vec r}}
 &\le
   \left(\frac{r_1}{r_1-p_1} + \frac{r_1}{q_1-r_1}\right)^{1/r_1}
     \|f\|_{L^{p_2}(L^{p_1,\infty})}^{\theta}
      \|f \|_{L^{q_2}(L^{q_1,\infty})}^{1-\theta}.
\end{align*}

However,
if   $1/p_1 + 1/p_2 = 1/q_1 + 1/q_2$,
then for any multiple index $\vec r$,
$L^{\vec p, \infty}\cap L^{\vec q,\infty}  \not\subset L^{\vec r}$.
\end{Theorem}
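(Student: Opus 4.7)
The plan is to dispatch the two forward inequalities by reducing to scalar interpolation together with H\"older's inequality, and then to exhibit a single explicit function which lies in $L^{\vec p,\infty}\cap L^{\vec q,\infty}$ but fails to be in $L^{\vec r}$ for any choice of the target index $\vec r$.

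For the first inequality I would work one level set at a time. With $E_\lambda=\{|f|>\lambda\}$ and $E_{\lambda,y}=\{x\colon (x,y)\in E_\lambda\}$, I use (\ref{eq:rpq}) to write
\[
|E_{\lambda,y}|^{r_2/r_1}=\bigl(|E_{\lambda,y}|^{p_2/p_1}\bigr)^{\theta r_2/p_2}\bigl(|E_{\lambda,y}|^{q_2/q_1}\bigr)^{(1-\theta)r_2/q_2}.
\]
Since $\theta r_2/p_2+(1-\theta)r_2/q_2=r_2\cdot 1/r_2=1$, H\"older's inequality in $y$ gives
\[
\|\chi^{}_{E_\lambda}\|_{L^{\vec r}}\le \|\chi^{}_{E_\lambda}\|_{L^{\vec p}}^\theta\,\|\chi^{}_{E_\lambda}\|_{L^{\vec q}}^{1-\theta}.
\]
Multiplying through by $\lambda=\lambda^\theta\lambda^{1-\theta}$ and taking the supremum over $\lambda>0$ yields the first claim. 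Cases in which some $p_i$ or $q_i$ is infinite are handled in the standard way by replacing the corresponding integral by an essential supremum.

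For the strong-norm inequality I freeze $y$ and apply Proposition~\ref{prop: interpolation} to $f(\cdot,y)$ on $\bbR^n$, which delivers
\[
\|f(\cdot,y)\|_{L^{r_1}}\le \Bigl(\tfrac{r_1}{r_1-p_1}+\tfrac{r_1}{q_1-r_1}\Bigr)^{1/r_1}\|f(\cdot,y)\|_{L^{p_1,\infty}}^\theta\|f(\cdot,y)\|_{L^{q_1,\infty}}^{1-\theta}.
\]
Taking the $L^{r_2}$ norm in $y$ of both sides and invoking H\"older with the conjugate pair $p_2/(\theta r_2),\ q_2/((1-\theta)r_2)$ cleanly separates the right-hand side into the product $\|f\|_{L^{p_2}(L^{p_1,\infty})}^\theta\,\|f\|_{L^{q_2}(L^{q_1,\infty})}^{1-\theta}$ with exactly the stated constant.

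For the failure statement set $s:=1/p_1+1/p_2=1/q_1+1/q_2$ and take $f(x,y)=(|x|^n+|y|^m)^{-s}$. Part (i) of Example~\ref{Ex: holder}, applied with the exponents $\vec p$ and then $\vec q$, already gives $f\in L^{\vec p,\infty}\cap L^{\vec q,\infty}$; the hypothesis $1/p_1+1/p_2=1/q_1+1/q_2$ is used precisely here, to let a single model function serve both weak spaces. To prove $f\notin L^{\vec r}$ for \emph{every} $\vec r$, I would compute $\|f\|_{L^{\vec r}}$ via the anisotropic change of variables $x=|y|^{m/n}u$. If $sr_1\le 1$ then the inner integral $\int_{\bbR^n}(|u|^n+1)^{-sr_1}du$ is already divergent, so $\|f(\cdot,y)\|_{L^{r_1}}=\infty$ for every $y$ and there is nothing more to prove. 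Otherwise $\|f(\cdot,y)\|_{L^{r_1}}\approx|y|^{m(1/r_1-s)}$, and the outer integral $\int_{\bbR^m}|y|^{mr_2(1/r_1-s)}dy$ diverges at the origin when $1/r_1+1/r_2\le s$ and at infinity when $1/r_1+1/r_2\ge s$. These alternatives are exhaustive, so $\|f\|_{L^{\vec r}}=\infty$ in every case. The only point that requires care is this dichotomy: both limits of the outer integral must be inspected, because neither alone rules out integrability for arbitrary $\vec r$.
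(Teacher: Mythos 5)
Your proposal is correct and follows essentially the same route as the paper: Hölder applied to the characteristic function of each level set for the weak-norm inequality, Proposition~\ref{prop: interpolation} slice-by-slice followed by Hölder in the outer variable for the strong-norm one, and the counterexample $f(x,y)=(|x|^n+|y|^m)^{-s}$ with $s=1/p_1+1/p_2=1/q_1+1/q_2$ for the last part. The only substantive difference is that you spell out the dichotomy for why $\int_{\bbR^m}|y|^{mr_2(1/r_1-s)}dy$ diverges for every $\vec r$ (at the origin if $1/r_1+1/r_2\le s$, at infinity otherwise), whereas the paper simply asserts "In both cases, we have $\|f\|_{L^{\vec r}}=\infty$" after bounding the inner integral below; your version is a modest improvement in explicitness but not a different argument.
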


\begin{proof}
Fix some function $f$ and $\lambda>0$. We see from H\"older's inequality that
\begin{align*}
\lambda \|\chi^{}_{\{|f|>\lambda\}} \|_{L^{\vec r}}
 &\le \lambda^{\theta} \|\chi^{}_{\{|f|>\lambda\}} \|_{L^{\vec p/\theta}}
 \cdot
 \lambda^{1-\theta} \|\chi^{}_{\{|f|>\lambda\}} \|_{L^{\vec q/(1-\theta)}} \\
 &\le \|f\|_{L^{\vec p,\infty}}^{\theta} \|f\|_{L^{\vec q,\infty}}^{1-\theta}.
\end{align*}
Hence
$    \|f\|_{L^{\vec r, \infty}} \le \|f\|_{L^{\vec p, \infty}}^{\theta}\|f\|_{L^{\vec q, \infty}}^{1-\theta}$.

On the other hand,
we see from Proposition~\ref{prop: interpolation} that
\[
  \|f(\cdot,y)\|_{L_x^{r_1}} \le \left(\frac{r_1}{r_1-p_1} + \frac{r_1}{q_1-r_1}\right)^{1/r_1}
        \|f(\cdot,y)\|_{L_x^{p_1,\infty}}^{\theta}  \|f(\cdot,y)\|_{L_x^{q_1,\infty}}^{1-\theta}.
\]
By H\"older's inequality, we get
\begin{align*}
 \|f\|_{L^{\vec r}}
 &\le \left(\frac{r_1}{r_1-p_1} + \frac{r_1}{q_1-r_1}\right)^{1/r_1}
     \left\|\|f\|_{L_x^{p_1,\infty}}^{\theta}\right\|_{L_y^{p_2/\theta}}
     \left\|\|f(\cdot,y)\|_{L_x^{q_1,\infty}}^{1-\theta}\right\|_{L_y^{q_2/(1-\theta)}} \\
 &= \left(\frac{r_1}{r_1-p_1} + \frac{r_1}{q_1-r_1}\right)^{1/r_1}
     \|f\|_{L^{p_2}(L^{p_1,\infty})}^{\theta}
      \|f \|_{L^{q_2}(L^{q_1,\infty})}^{1-\theta}.
\end{align*}

Next we show that $L^{\vec p, \infty}\cap L^{\vec q,\infty}  \not\subset L^{\vec r}$.
Set $f(x,y) = (|x|^n+|y|^m)^{-\gamma}$, where $\gamma>0$.
For $\gamma = 1/q_1 + 1/q_2=1/p_1+1/p_2$, we see from
Example~\ref{Ex: holder}(i) that
  $f\in L^{\vec p,\infty}\cap L^{\vec q,\infty}$.

It remains to show that $f\not\in L^{\vec r}$.
We have
\begin{align*}
\int_{\bbR^n} |f(x,y)|^{r_1}dx
&\ge \int_{|x|^n<|y|^m} |f(x,y)|^{r_1}dx +  \int_{|x|^n\ge |y|^m} |f(x,y)|^{r_1}dx \\
&\ge \int_{|x|^n<|y|^m} \frac{1}{2^{\gamma r_1}|y|^{m\gamma r_1}}dx +  \int_{|x|^n\ge |y|^m} \frac{1}{2^{\gamma r_1}|x|^{n\gamma r_1}}dx.
\end{align*}
Hence for $\gamma r_1 \le 1$, we have $\|f(\cdot,y)\|_{L^{r_1}_x} = \infty$. And for $\gamma r_1>1$, we have
\[
  \int_{\bbR^n} |f(x,y)|^{r_1}dx  \ge C_{n,r_1} \frac{1}{|y|^{m\gamma r_1 - m}}.
\]
In both cases, we have $\|f\|_{L^{\vec r}} = \infty$.
\end{proof}


When the iterated weak norms are invoked,
we get again an interpolation theorem.
However, four iterated weak norms are invoked.
\begin{Theorem}
Suppose that
\begin{align*}
  \frac{1}{r_1} &= \frac{\theta}{p_1} + \frac{1-\theta}{q_1}, \\
  \frac{1}{r_2} &=
  \frac{\theta\xi}{p_{21}} + \frac{(1-\theta)\xi}{p_{22}}
    + \frac{\theta(1-\xi)}{q_{21}} + \frac{(1-\theta)(1-\xi)}{q_{22}},
\end{align*}
where $0<\theta,\xi<1$ are constants.
Then we have
\[
    \|f\|_{L^{\vec r}} \le   C\|f  \|_{L^{p_{21},\infty}(L^{p_1,\infty})}^{\theta \xi}
      \|f    \|_{L^{p_{22},\infty}(L^{q_1,\infty})}^{(1-\theta) \xi}
     \|f   \|_{L^{q_{21},\infty}(L^{p_1,\infty})}^{\theta (1-\xi)}
     \|f   \|_{L^{q_{22},\infty}(L^{q_1,\infty})}^{(1-\theta) (1-\xi)}.
\]
\end{Theorem}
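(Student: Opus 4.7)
The plan is to reduce the mixed strong norm $\|f\|_{L^{\vec r}}$ to iterated weak norms by applying the one-dimensional interpolation result (Proposition~\ref{prop: interpolation}) \emph{twice}: once in the inner variable $x$, and then once in the outer variable $y$, combined with a use of H\"older's inequality for weak $L^p$ on $\bbR$.

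First I would fix $y$ and apply Proposition~\ref{prop: interpolation} on $\bbR^n$ to the section $f(\cdot, y)$ with exponents $p_1 < r_1 < q_1$ (or the appropriate order). Setting
\[
  g(y) := \|f(\cdot,y)\|_{L_x^{p_1,\infty}}, \qquad h(y) := \|f(\cdot,y)\|_{L_x^{q_1,\infty}},
\]
this yields $\|f(\cdot,y)\|_{L_x^{r_1}} \le C\, g(y)^{\theta} h(y)^{1-\theta}$ for almost every $y$, and hence
\[
  \|f\|_{L^{\vec r}} \le C\, \bigl\| g^{\theta} h^{1-\theta} \bigr\|_{L_y^{r_2}}.
\]

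Next I introduce two auxiliary outer exponents $s_2, t_2$ by
\[
  \frac{1}{s_2} = \frac{\theta}{p_{21}} + \frac{1-\theta}{p_{22}}, \qquad
  \frac{1}{t_2} = \frac{\theta}{q_{21}} + \frac{1-\theta}{q_{22}},
\]
so that the hypothesis on $r_2$ reads $1/r_2 = \xi/s_2 + (1-\xi)/t_2$. Applying Proposition~\ref{prop: interpolation} on $\bbR^m$ to the scalar function $g^{\theta} h^{1-\theta}$ gives
\[
  \bigl\| g^{\theta} h^{1-\theta} \bigr\|_{L_y^{r_2}}
  \le C\, \bigl\| g^{\theta} h^{1-\theta} \bigr\|_{L_y^{s_2,\infty}}^{\xi}
         \bigl\| g^{\theta} h^{1-\theta} \bigr\|_{L_y^{t_2,\infty}}^{1-\xi}.
\]
Then H\"older's inequality for weak $L^p$ spaces on $\bbR^m$ (applied to the factorization $g^{\theta}\cdot h^{1-\theta}$, together with $\|g^\theta\|_{L^{p,\infty}} = \|g\|_{L^{p\theta,\infty}}^\theta$) bounds each factor:
\[
  \bigl\| g^{\theta} h^{1-\theta} \bigr\|_{L_y^{s_2,\infty}}
  \le C\, \|g\|_{L_y^{p_{21},\infty}}^{\theta} \|h\|_{L_y^{p_{22},\infty}}^{1-\theta},
\]
and similarly for $t_2, q_{21}, q_{22}$. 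Recognizing $\|g\|_{L_y^{p_{21},\infty}} = \|f\|_{L^{p_{21},\infty}(L^{p_1,\infty})}$ etc., and multiplying the resulting estimates gives exactly the claimed inequality.

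The main technical point, and the only obstacle I anticipate, is bookkeeping the degenerate/endpoint cases: when $p_1 = q_1$, $s_2 = t_2$, or when some of the exponents equal $\infty$, Proposition~\ref{prop: interpolation} either becomes trivial or needs its $q = \infty$ interpretation. In each such case the argument simplifies (e.g.\ if $s_2 = t_2$ the middle step collapses and we skip directly to the weak H\"older step), so no new ideas are required; one just needs to split into cases and keep track of the constants.
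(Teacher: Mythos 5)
Your proof is correct and follows essentially the same route as the paper: apply Proposition~\ref{prop: interpolation} in the inner variable $x$, then again in the outer variable $y$ using intermediate exponents (your $s_2,t_2$ are the paper's auxiliary $p_2,q_2$), and finally split the product $g^{\theta}h^{1-\theta}$ with H\"older's inequality for weak $L^p$ and the identity $\|g^{\theta}\|_{L^{p,\infty}}=\|g\|_{L^{p\theta,\infty}}^{\theta}$. The paper does not spell out the endpoint and degenerate-exponent caveats you flag at the end, but otherwise the two arguments coincide.
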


\begin{proof}
We see from Proposition~\ref{prop: interpolation} that
\[
  \|f(\cdot,y)\|_{L_x^{r_1}} \le \left(\frac{r_1}{r_1-p_1} + \frac{r_1}{q_1-r_1}\right)^{1/r_1}
        \|f(\cdot,y)\|_{L_x^{p_1,\infty}}^{\theta}  \|f(\cdot,y)\|_{L_x^{q_1,\infty}}^{1-\theta}.
\]
Set
\[
  \frac{1}{p_2} = \frac{\theta}{p_{21}} + \frac{1-\theta}{p_{22}}
  \quad \mathrm{and}\quad
  \frac{1}{q_2} = \frac{\theta}{q_{21}} + \frac{1-\theta}{q_{22}}.
\]
Then we have  $1/r_2 = \xi/p_2 + (1-\xi)/q_2$. Using Proposition~\ref{prop: interpolation} again we get
\begin{align}
 \|f\|_{L^{\vec r}}
 &\le C \left\|\|f(\cdot,y)\|_{L_x^{p_1,\infty}}^{\theta}  \|f(\cdot,y)\|_{L_x^{q_1,\infty}}^{1-\theta}
     \right\|_{L_y^{p_2,\infty}}^{\xi} \nonumber \\
    &\qquad \times
     \left\|\|f(\cdot,y)\|_{L_x^{p_1,\infty}}^{\theta}  \|f(\cdot,y)\|_{L_x^{q_1,\infty}}^{1-\theta}
      \right\|_{L_y^{q_2,\infty}}^{1-\xi}.
             \nonumber
\end{align}
Now we see from H\"older's inequality for weak norms that
\begin{align}
 \|f\|_{L^{\vec r}}
 &\le C \left\|\|f(\cdot,y)\|_{L_x^{p_1,\infty}}^{\theta}\right\|_{L_y^{p_{21}/\theta,\infty}}^{ \xi}
       \left\|  \|f(\cdot,y)\|_{L_x^{q_1,\infty}}^{1-\theta}
     \right\|_{L_y^{p_{22}/(1-\theta),\infty}}^{\xi} \nonumber \\
    &\qquad \times
      \left\|\|f(\cdot,y)\|_{L_x^{p_1,\infty}}^{\theta}\right\|_{L_y^{q_{21}/\theta,\infty}}^{ 1-\xi}
       \left\|  \|f(\cdot,y)\|_{L_x^{q_1,\infty}}^{1-\theta}
     \right\|_{L_y^{q_{22}/(1-\theta),\infty}}^{1-\xi}
             \nonumber \\
&= C\|f  \|_{L^{p_{21},\infty}(L^{p_1,\infty})}^{\theta \xi}
      \|f  \|_{L^{p_{22},\infty}(L^{q_1,\infty})}^{(1-\theta) \xi}
     \|f  \|_{L^{q_{21},\infty}(L^{p_1,\infty})}^{\theta (1-\xi)}
     \|f   \|_{L^{q_{22},\infty}(L^{q_1,\infty})}^{(1-\theta) (1-\xi)}.
\end{align}
\end{proof}

The above results can be restated as follows.
Denote  $\vec p_a = (p_1, p_{21})$,
 $\vec p_b = (p_1, q_{21})$,
 $\vec q_a = (q_1, p_{22})$ and
 $\vec q_b = (q_1, q_{22})$.
Let
 $(1/r_1, 1/r_2)$ be a point in the interior of the quadrilateral determined by
the four points
 $(1/p_1, 1/p_{21})$,
 $(1/p_1, 1/q_{21})$,
 $(1/q_1, 1/p_{22})$,
 $(1/q_1, 1/q_{22})$
 as shown  in Figure~\ref{Fig:fig1}.
Then we have
\[
  L^{\vec p_a,\infty}\cap L^{\vec p_b,\infty}\cap L^{\vec q_a,\infty}\cap L^{\vec q_b,\infty}
  \subset L^{\vec r}.
\]

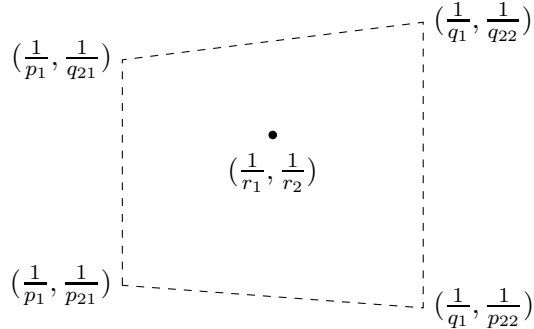
\begin{figure}[!ht]

\begin{center}
\begin{tikzpicture}
\coordinate  [label=left:${( \frac{1}{p_1}, \frac{1}{p_{21}})}$](Pa) at (0,0);
\coordinate  [label=left:${( \frac{1}{p_1}, \frac{1}{q_{21}})}$](Pb) at (0,3);
\coordinate  [label=right:${(\frac{1}{q_1}, \frac{1}{p_{22}})}$](Qa) at (4,-0.3);
\coordinate  [label=right:${(\frac{1}{q_1}, \frac{1}{q_{22}})}$] (Qb) at (4,3.5);

\filldraw (2,2) circle  (0.05);
\draw (2,1.5) node {$(\frac{1}{r_1}, \frac{1}{r_2})$};
\draw[dashed] (Pa) -- (Pb) -- (Qb) -- (Qa) -- (Pa);
\end{tikzpicture}

\caption{Interpolation area} \label{Fig:fig1}
\end{center}
\end{figure}

\section{Geometric Inequalities Related to Fractional Integration}

In this section, we study
the boundedness of $T_{\gamma}$ and $L_{\gamma}$ from $L^{\vec p}$ to $L^{\vec q}$.
First, we consider $T_{\gamma}$ with $\vec p = (\infty, \infty)$.  In this case, it is more convenient to rewrite the inequality in the following
form,
\[
  \|F\|_{X}  \lesssim \sup\limits_{x,y \in \mathbb{R}^{n}} F(x,y) (|x+y|+|x-y|)^{ n/q_1 + n/q_2  },
\]
where $X$ stands for some norm defined on $\bbR^{2n}$. Recall that $L^{\vec p}= L^{\infty}$ whenever $\vec p = (\infty, \infty)$.

   First, we point out that the following inequality
   \begin{equation}\label{eq: false sup inequality}
   \|F\|_{L^{\vec p, \infty}}  \leq C_{\vec p,n}  \sup\limits_{x,y \in \mathbb{R}^{n}} F(x,y) |x-y|^{ n/p_1 + n/p_2  }
   \end{equation}
   if false,  even for characteristic functions. This is because for any $s>0$,
   $$|\{(x, y):\, |x-y|<s \}|=\infty.$$
   Now we  turn  to study its fractional form as follows.

\begin{Theorem} \label{thm:weak norm}
    Let $F$ be a nonnegative measurable function  defined on $\mathbb{R}^{2n}$. Then for all $0<q_1, q_2 \leq \infty$, we have
    \begin{equation}\label{eq:weak norm}
    \|F\|_{L^{\vec q, \infty}}  \leq C_{\vec q,n}  \sup\limits_{x,y \in \mathbb{R}^{n}} F(x,y) (|x+y|+|x-y|)^{ n/q_1 + n/q_2  },
    \end{equation}
    \begin{equation}\label{eq:weak weak norm}
    \|F\|_{L^{q_2,\infty}(L^{q_1,\infty})}  \leq C_{\vec q,n}  \sup\limits_{x,y \in \mathbb{R}^{n}} F(x,y) (|x+y|+|x-y|)^{ n/q_1 + n/q_2  }.
    \end{equation}
    However, for $\vec q \ne (\infty, \infty)$, we have
    \begin{equation}\label{eq: norm}
    \|F\|_{L^{\vec q}}  \leq C_{\vec q,n}  \sup\limits_{x,y \in \mathbb{R}^{n}} F(x,y) (|x+y|+|x-y|)^{ n/q_1 + n/q_2  }
    \end{equation}
    is not true for all $F\in L^{\vec q}(\mathbb{R}^{2n})$.
    \end{Theorem}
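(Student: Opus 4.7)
The plan is to normalize by setting $M := \sup_{x,y} F(x,y)(|x+y|+|x-y|)^{\gamma}$ with $\gamma := n/q_1+n/q_2$, and assume $M<\infty$. Then $F(x,y) \le M(|x+y|+|x-y|)^{-\gamma}$ pointwise, and since every norm in sight is monotone in $F$, it suffices to estimate the single model function $G(x,y) := (|x+y|+|x-y|)^{-\gamma}$. The key geometric fact is the two-sided equivalence
\[
2\max(|x|,|y|) \le |x+y|+|x-y| \le 2(|x|+|y|) \le 4\max(|x|,|y|),
\]
obtained by applying the triangle inequality to $2x=(x+y)+(x-y)$ and $2y=(x+y)-(x-y)$. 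Consequently $G(x,y) \approx H(x,y):=\max(|x|,|y|)^{-\gamma}$, which replaces each ellipsoidal sub-level set of $G$ by a product of two balls and makes every computation transparent.

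For (\ref{eq:weak norm}), the level set is $\{H>\lambda\}=\{|x|<R\}\times\{|y|<R\}$ with $R:=\lambda^{-1/\gamma}$, so
\[
\lambda\,\|\chi_{\{H>\lambda\}}\|_{L^{\vec q}} = \lambda\cdot v_n^{1/q_1}R^{n/q_1}\cdot v_n^{1/q_2}R^{n/q_2} = v_n^{1/q_1+1/q_2},
\]
independent of $\lambda$. For (\ref{eq:weak weak norm}) with $q_1<\infty$, fix $y\neq 0$: the slice $\{x:H(x,y)>\lambda\}$ is empty when $\lambda\ge|y|^{-\gamma}$ and is otherwise a ball of volume $v_n\lambda^{-n/\gamma}$. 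Since the exponent $1-n/(\gamma q_1)=q_1/(q_1+q_2)\ge 0$, the map $\lambda\mapsto\lambda(v_n\lambda^{-n/\gamma})^{1/q_1}$ is nondecreasing on $(0,|y|^{-\gamma}]$, and evaluating at the endpoint yields $\|H(\cdot,y)\|_{L^{q_1,\infty}_x}\lesssim|y|^{-n/q_2}$; taking the outer $L_y^{q_2,\infty}$ norm gives $v_n^{1/q_1+1/q_2}$. The cases with $q_1=\infty$ or $q_2=\infty$ reduce to direct inspection of the same formula (e.g.\ for $q_1=\infty$ simply note $\|H(\cdot,y)\|_{L_x^\infty}=|y|^{-n/q_2}$).

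For the failure of (\ref{eq: norm}) when $\vec q \neq (\infty,\infty)$, I take $F=G$ itself, so the right-hand side equals $1$. Using $G \approx H$ and splitting $\int_{\bbR^n}\max(|x|,|y|)^{-\gamma q_1}\,dx$ into $|x|<|y|$ and $|x|\ge|y|$, one gets three sub-cases. If $q_1,q_2<\infty$ then $\gamma q_1=n+nq_1/q_2>n$, the integral is proportional to $|y|^{n-\gamma q_1}$, hence $\|G(\cdot,y)\|_{L_x^{q_1}}\approx|y|^{-n/q_2}$ and $\|G\|_{L^{\vec q}}^{q_2}\approx\int|y|^{-n}\,dy=\infty$. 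If $q_2=\infty$ and $q_1<\infty$ then $\gamma q_1=n$ and the inner integral already diverges at infinity. If $q_1=\infty$ and $q_2<\infty$ then $\|G(\cdot,y)\|_{L_x^\infty}=(2|y|)^{-n/q_2}\notin L_y^{q_2}$. The main obstacle I anticipate is simply bookkeeping of these three sub-cases; the whole theorem is essentially a corollary of the equivalence $|x+y|+|x-y|\approx\max(|x|,|y|)$, which collapses every ellipsoidal level set into a product of balls.
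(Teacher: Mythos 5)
Your proof of \eqref{eq:weak norm} and \eqref{eq:weak weak norm} is correct, and it takes a genuinely cleaner route than the paper. The paper works with $(|x+y|+|x-y|)^{-\gamma}$ directly, estimating slice measures by hand and splitting $y$ into the region $E_1$ and its complement. You instead isolate the equivalence $|x+y|+|x-y|\approx\max(|x|,|y|)$ once and for all, which turns every sub-level set into a product of two balls. The subsequent computations then become one-liners (the $L^{\vec q,\infty}$ quantity is literally constant in $\lambda$, and the slice-wise sup for \eqref{eq:weak weak norm} is achieved at the endpoint $\lambda=|y|^{-\gamma}$ because of the monotonicity you note). Your observation $1-n/(\gamma q_1)=q_1/(q_1+q_2)\ge 0$ is exactly what makes the endpoint argument work, and the exponent bookkeeping $\gamma q_1/(q_1+q_2)=n/q_2$ is right. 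The endpoint cases $q_1=\infty$ or $q_2=\infty$ also reduce to inspection as you say. This buys you a proof with fewer moving parts; the paper's proof is more direct computationally but repeats essentially the same splitting three times.

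There is one small gap in the counterexample to \eqref{eq: norm}. The theorem asserts failure of \eqref{eq: norm} over functions $F\in L^{\vec q}(\bbR^{2n})$, but $G=(|x+y|+|x-y|)^{-\gamma}\notin L^{\vec q}$ in every non-trivial case, so $G$ itself is not an admissible witness. Your computation does show $\|G\|_{L^{\vec q}}=\infty$ while the right-hand side is finite, which is the heart of the matter, but you should finish by truncating: set, say, $G_N=G\cdot\chi_{\{1\le |y|\le N,\,|y|\le|x|\le N\}}$ (or any increasing exhaustion of $\{G>0\}$ by sets on which $G\in L^{\vec q}$). Then $G_N\in L^{\vec q}$, $\sup G_N(|x+y|+|x-y|)^\gamma\le 1$, and by monotone convergence $\|G_N\|_{L^{\vec q}}\to\|G\|_{L^{\vec q}}=\infty$, so no constant $C_{\vec q,n}$ can work. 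This is precisely what the paper does with its $F_N$, only it tracks the logarithmic rate $(\ln N)^{1/q_2}$, which your argument doesn't need. With this one-line patch, your proof is complete.
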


    \begin{proof}
    (i). First, we prove (\ref{eq:weak norm}).
    For $0<q_1, q_2<\infty$, suppose that
    \[
      \sup\limits_{x,y \in \mathbb{R}^{n}} F(x,y) (|x+y|+|x-y|)^{ n/q_1 + n/q_2  }=s<\infty.
    \]
    Denote $E_{y,\lambda } = \{x:\, |F(x,y)|>\lambda\}$.
    For any $\lambda >0$,
    $$\Big\| |E_{y,\lambda}|^{1/q_1} \Big\|_{L^{q_2}}
    \leq \left(\int_{\mathbb{R}^{n}} \!\left| \left\{x\!: (|x+y|+|x-y|)^{ n/q_1 + n/q_2  } < \frac{s}{\lambda} \right\} \right|^{q_2/q_1} \! dy \!\right)^{1/q_2}.$$
    Denote $E_1=\big\{y:\, |y|< (1/2)(s/\lambda)^{ q_1q_2/(nq_1+nq_2)}\big\}$.
    For any $y\in E_1$,
    \begin{equation}\label{eq:y in E1 b}
    \left| \left\{x:\, (|x+y|+|x-y|)^{ n/q_1 + n/q_2  } < \frac{s}{\lambda} \right\} \right| \lesssim_{n}  \Big(\frac{s}{\lambda}\Big)^{ q_1q_2/(q_1+q_2)}.
    \end{equation}
    And for $y\not\in E_{1}$,
    \begin{equation}\label{eq:y not in E1 b}
    \left| \left\{x: (|x+y|+|x-y|)^{ n/q_1 + n/q_2  } < \frac{s}{\lambda} \right\} \right| =0.
    \end{equation}
    So
    \begin{align*}
    & \hskip -20mm\left(\int_{\mathbb{R}^{n}} \left| \left\{x:\, (|x+y|+|x-y|)^{ n/q_1 + n/q_2  } < \frac{s}{\lambda} \right\} \right|^{q_2/q_1} dy \right)^{1/q_2} \\
    &\leq C_{\vec q,n} \left( \int_{E_1}  \Big(\Big(\frac{s}{\lambda}\Big)^{ q_1q_2/(q_1+q_2)} \Big)^{ q_2/q_1} dy \right)^{1/q_2}\\
    &= C_{\vec q,n}\Big(\frac{s}{\lambda}\Big)^{ q_2/(q_1 + q_2)}
      \Big(\int_{E_1} dy \Big)^{1/q_2}\\
    &=C_{\vec q,n}\frac{s}{\lambda}.
    \end{align*}
    Therefore,
    $$\|F\|_{L^{\vec q, \infty}}= \sup_{\lambda>0} \lambda \Big\| |E_{y,\lambda}|^{1/q_1} \Big\|_{L^{q_2}}
    \lesssim_{n}    \sup\limits_{x,y \in \mathbb{R}^{n}} F(x,y) (|x + y|+|x-y|)^{ n/q_1 + n/q_2  }.$$

    Now we consider the case of endpoints. Since the case of $q_1=q_2=\infty$ is obvious, we assume  that one of $q_1$ and $q_2$ is finite.

    For  $q_2=\infty$, we have
    $$\|F\|_{L^{\vec q,\infty}}= \sup_{\lambda>0} \lambda \Big\| |E_{y,\lambda}|^{1/q_1} \Big\|_{L^{\infty}}.$$
    Taking similar calculation gives for any $\lambda>0$,
    $$\Big\| |E_{y,\lambda}|^{1/q_1} \Big\|_{L^{\infty}} \leq  \Big\|  \left| \left\{x:\, (|x+y|+|x-y|)^{ n/q_1 }  < \frac{s}{\lambda} \right\} \right|^{1/q_1} \Big\|_{L^{\infty}}
    \leq C_{q_1,n}  \frac{s}{\lambda}.$$
    Hence
    $$\|F\|_{L^{\vec q,\infty}}= \sup_{\lambda>0} \lambda \Big\| |E_{y,\lambda}|^{1/q_1} \Big\|_{L^{\infty}} \leq C_{q_1,n} s.$$

    For $q_1=\infty$, we have
    $$\|F\|_{L^{\vec q,\infty}}= \sup_{\lambda>0} \lambda \Big\|  \| \chi^{}_{E_{y,\lambda}} \|_{L^{\infty}} \Big\|_{L^{q_2}}.$$
    Note that for any $\lambda>0$,
    $$\chi^{}_{E_{y,\lambda}} \leq \chi_{ \{x:\, (|x+y|+|x-y|)^{ n/q_2  }  <  {s}/{\lambda} \} }.$$
    If $|y|< (1/2)( {s}/{\lambda})^{q_2/n}$,
    $\| \chi^{}_{E_{y,\lambda}} \|_{L^{\infty}}\leq 1$;
    if $|y| \geq (1/2)( {s}/{\lambda})^{q_2/n}$,
    $\| \chi^{}_{E_{y,\lambda}} \|_{L^{\infty}}=0$.
    Hence
    $$\Big\|  \| \chi^{}_{E_{y,\lambda}} \|_{L^{\infty}}  \Big\|_{L^{q_2}} \leq C_{q_2,n} \frac{s}{\lambda}.$$
    Therefore,
    $$\|F\|_{L^{\vec q,\infty}}= \sup_{\lambda>0} \lambda \Big\|  \| \chi^{}_{E_{y,\lambda}} \|_{L^{\infty}} \Big\|_{L^{q_2}} \leq C_{q_2,n} s.$$

    (ii). Next we prove (\ref{eq:weak weak norm}).
    As in the previous arguments, we consider first the case of $q_1, q_2<\infty$.

    As in the proof of  (\ref{eq:weak norm}),
    suppose that $\sup_{x,y \in \mathbb{R}^{n}} F(x,y) (|x+y|+|x-y|)^{ n/q_1 + n/q_2  }=s<\infty$.
    Then we see from (\ref{eq:y in E1 b}) and  (\ref{eq:y not in E1 b})  that for any $y \neq 0$,
    \begin{align*}
    & \hskip -10mm\sup_{\lambda>0}  \lambda | \{x:\, |F(x,y)|>\lambda\}|^{1/q_1} \\
    &\leq  \sup_{\lambda>0}  \lambda \left| \left\{x:\,   (|x+y|+|x-y|)^{ n/q_1 + n/q_2  }<   \frac{s}{\lambda}   \right\} \right|^{1/q_1}\\
    &= C_{\vec q,n} s |y|^{-n/q_2}.
    \end{align*}
    Hence
    \begin{align*}
     \|F\|_{L^{q_2,\infty}(L^{q_1,\infty})} &=   \Big\|  \| F\|_{L_{x}^{q_1,\infty}}  \Big\|_{L_{y}^{q_2,\infty}} \\
     &=\sup_{\beta>0} \beta \left|\left\{y:\, \sup_{\lambda>0}  \lambda | \{x:\, |F(x,y)|>\lambda\}|^{1/q_1} > \beta  \right\}\right|^{1/q_2} \\
    &\leq  \sup_{\beta>0} \beta \left|\left\{y:\,  C_{\vec q,n} s |y|^{-n/q_2} > \beta  \right\}\right|^{1/q_2} \\
    &\leq  \sup_{\beta>0} \beta C_{\vec q,n} s \beta^{-1} = C_{\vec q,n} s.
    \end{align*}

    It remains to see the endpoint cases.
    For  $q_2=\infty$,
    $$ \|F\|_{L^{\infty}(L^{q_1,\infty})} =   \Big\|  \| F\|_{L_{x}^{q_1,\infty}}  \Big\|_{L_{y}^{\infty}}.$$
    For any $\lambda>0$ and $y\in\bbR^n$,
    \begin{align*}
    \lambda | \{x:\, |F(x,y)|>\lambda \}|^{1/q_1}
    &\leq  \lambda \left| \left\{x:\,  (|x+y|+|x-y|)^{ n/q_1 }  < \frac{s}{\lambda}  \right \} \right|^{1/q_1} \\
    &\leq C_{q_1,n} s.
    \end{align*}
    Hence
    $$ \|F\|_{L^{\infty}(L^{q_1,\infty})} =   \Big\|  \| F\|_{L_{x}^{q_1,\infty}}  \Big\|_{L_{y}^{\infty}} \leq C_{q_1,n} s.$$

    For $q_1=\infty$,
    $$ \|F\|_{L^{q_2,\infty}(L^{\infty})} =   \Big\|  \| F\|_{L_{x}^{\infty}}  \Big\|_{L_{y}^{q_2,\infty}}.$$
    Since
    $$F(x,y) \leq s (|x+y|+|x-y|)^{- n/q_2  }  \leq s(2|y|)^{- n/q_2  },$$
    we get for all $y$,
    $$\| F\|_{L_{x}^{\infty}}  \leq C_{q_2,n} s |y|^{- n/q_2  }.$$
    Hence $ \Big\|  \| F\|_{L_{x}^{\infty}}  \Big\|_{L_{y}^{q_2,\infty}} \leq C_{q_2,n} s$.

    Finally,  we use a counterexample to show that (\ref{eq: norm}) is false.
    We consider only the case of $q_1, q_2<\infty$. Other cases can be found in
    Theorem~\ref{thm:half weak norm}.

    Let
    $$F_{N}(x,y)=\left(\frac{1}{|x|+|y|}\right)^{ n/q_1 + n/q_2  } \chi_{\{(x,y):\, 1\leq |y| \leq N, |y| \leq |x|\}}, \ \ N \in \mathbb{N}.$$
    Clearly,
    $$\sup_{x,y \in \mathbb{R}^{n}} F_N(x,y) (|x+y|+|x-y|)^{ n/q_1 + n/q_2  } \leq 2^{ n/q_1 + n/q_2  } .$$
    On the other hand,
    $$\|F_N\|_{L^{\vec q}} = \left(\int_{\{y:\, 1\leq |y| \leq N\}} \left(\int_{ \{x:\, |x|\geq|y| \} } \frac{1}{(|x|+|y|)^{n+nq_1/q_2}} dx \right)^{q_2/q_1} dy \right)^{1/q_2}.$$
    By polar coordinates, we have
    \begin{align*}
    \|F_N\|_{L^{\vec q}}
    &= C \left(\int_{\{y:\, 1\leq |y| \leq N\}} \left(\int_{|y|}^{\infty} \frac{r^{n-1}}{(r+|y|)^{n+nq_1/q_2}} dr \right)^{q_2/q_1} dy \right)^{1/q_2} \\
    &\geq C' \left(\int_{\{y:\, 1\leq |y| \leq N\}} \left(\int_{|y|}^{\infty} \frac{r^{n-1}}{r^{n+nq_1/q_2}} dr \right)^{q_2/q_1} dy \right)^{1/q_2} \\
    &=C_{\vec q,n}  \big(\int_{\{y:\, 1\leq |y| \leq N\}} |y|^{-n} dy \big)^{1/q_2} \\
    &=C_{\vec q,n} (\ln   N )^{1/q_2}.
    \end{align*}
    Letting $N \to \infty$, we get
    $$ \|F_N\|_{L^{\vec q}} \rightarrow \infty,$$
    which implies that (\ref{eq: norm}) does not hold for all $F\in L^{\vec q}(\mathbb{R}^{2n})$.
    \end{proof}

Next we consider the boundedness of $T_{\gamma}$ from $L^{\infty}(\bbR^{2n})$ to $X(\bbR^n)$, where $X$ stands for the mixed norm
$L^{q_2,\infty}(L^{q_1})$ or $L^{q_2}(L^{q_1,\infty})$.

    \begin{Theorem}\label{thm:half weak norm}
    Let $F$ be a nonnegative measurable function  defined on $\mathbb{R}^{2n}$. Then for all $0<q_1, q_2 < \infty$ we have
    \begin{equation}\label{eq: strong weak norm}
    \|F\|_{L^{q_2,\infty}(L^{q_1})}  \leq C_{\vec q,n}  \sup\limits_{x,y \in \mathbb{R}^{n}} F(x,y) (|x+y|+|x-y|)^{ n/q_1 + n/q_2  }.
    \end{equation}
    However,
    \begin{equation}\label{eq: weak strong norm}
    \|F\|_{L^{q_2}(L^{q_1,\infty})}  \leq C_{\vec q,n}  \sup\limits_{x,y \in \mathbb{R}^{n}} F(x,y) (|x+y|+|x-y|)^{ n/q_1 + n/q_2  }
    \end{equation}
    does not hold.

    Meanwhile, we present all the endpoint cases. For any  $C_{\vec q,n}>0$,
    \begin{equation}\label{eq: endpoint strong weak norm}
    \|F\|_{L^{\infty}(L^{q_1})}  \nleq C_{\vec q,n}  \sup\limits_{x,y \in \mathbb{R}^{n}} F(x,y) (|x+y|+|x-y|)^{ n/q_1 },
    \end{equation}

    \begin{equation}\label{eq: endpoint weak strong norm}
    \|F\|_{L^{q_1}(L^{\infty})}  \nleq C_{\vec q,n}  \sup\limits_{x,y \in \mathbb{R}^{n}} F(x,y) (|x+y|+|x-y|)^{ n/q_1 }.
    \end{equation}
    For the remaining  endpoint cases, we have
    \begin{equation}\label{eq: endpoint weak weak norm1}
    \|F\|_{L^{q_1, \infty}(L^{\infty})} \leq C_{\vec q,n}  \sup\limits_{x,y \in \mathbb{R}^{n}} F(x,y) (|x+y|+|x-y|)^{ n/q_1 },
    \end{equation}

    \begin{equation}\label{eq: endpoint weak weak norm2}
    \|F\|_{L^{\infty}(L^{q_1, \infty})}  \leq C_{\vec q,n}  \sup\limits_{x,y \in \mathbb{R}^{n}} F(x,y) (|x+y|+|x-y|)^{ n/q_1 }.
    \end{equation}

    \end{Theorem}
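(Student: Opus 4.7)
The plan is to prove the four positive assertions by direct computation, using the key pointwise inequality $|x+y|+|x-y| \ge 2\max(|x|,|y|)$, and to disprove the three negative assertions by adapting the counterexample from the end of Theorem~\ref{thm:weak norm}.

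For the positive bound (\ref{eq: strong weak norm}), set $s = \sup F(x,y)(|x+y|+|x-y|)^{n/q_1+n/q_2}$. Then for each fixed $y\ne 0$, $F(x,y)^{q_1} \le s^{q_1}(|x+y|+|x-y|)^{-n-nq_1/q_2}$, and splitting the $x$-integration according to $|x| \le |y|$ versus $|x| > |y|$ and using $|x+y|+|x-y|\ge 2\max(|x|,|y|)$, a straightforward radial integration gives
\[
  \|F(\cdot,y)\|_{L^{q_1}_x}^{q_1} \le C_{\vec q,n}\, s^{q_1}\,|y|^{-nq_1/q_2}.
\]
Taking the $L^{q_2,\infty}_y$ quasi-norm and using $\||y|^{-n/q_2}\|_{L^{q_2,\infty}}=v_n^{1/q_2}$ yields (\ref{eq: strong weak norm}). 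For the two surviving endpoints (\ref{eq: endpoint weak weak norm1}) and (\ref{eq: endpoint weak weak norm2}) with $0<q_1<\infty$, we simply specialize (\ref{eq:weak weak norm}) in Theorem~\ref{thm:weak norm} by taking $q_2=\infty$ or $q_1=\infty$ respectively; no fresh argument is needed.

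For the failure of (\ref{eq: weak strong norm}), I reuse the family $F_N(x,y)=(|x|+|y|)^{-n/q_1-n/q_2}\chi^{}_{\{1\le|y|\le N,\ |y|\le |x|\}}$ from the proof of Theorem~\ref{thm:weak norm}, which satisfies $\sup F_N (|x+y|+|x-y|)^{n/q_1+n/q_2}\lesssim 1$. A layer-cake computation of $\|F_N(\cdot,y)\|_{L^{q_1,\infty}_x}$ for fixed $y$ with $1\le|y|\le N$—choosing $\lambda \sim |y|^{-n/q_1-n/q_2}$—shows this inner weak norm is $\gtrsim |y|^{-n/q_2}$, whence $\|F_N\|_{L^{q_2}(L^{q_1,\infty})}\gtrsim (\ln N)^{1/q_2}\to\infty$, contradicting (\ref{eq: weak strong norm}).

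For the failure of the remaining endpoints (\ref{eq: endpoint strong weak norm}) and (\ref{eq: endpoint weak strong norm}), I take the single test function $F(x,y)=(|x+y|+|x-y|)^{-n/q_1}$, which makes the right-hand side equal to $1$. Using $|x+y|+|x-y|\ge 2|x|$ (resp.\ the identity $\min_x (|x+y|+|x-y|)=2|y|$, attained on the segment joining $-y$ and $y$), one sees
\[
  \|F(\cdot,y)\|_{L^{q_1}_x}^{q_1} = \int_{\bbR^n}\frac{dx}{(|x+y|+|x-y|)^n}=\infty,
  \qquad
  \|F(\cdot,y)\|_{L^{\infty}_x} = (2|y|)^{-n/q_1},
\]
so $\|F\|_{L^\infty(L^{q_1})}=\infty$ and $\|F\|_{L^{q_1}(L^\infty)}=(2)^{-n/q_1}\||y|^{-n/q_1}\|_{L^{q_1}_y}=\infty$. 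The main obstacle is the tightness of the lower-bound computation in step two: one must extract the logarithmic blow-up of $\|F_N\|_{L^{q_2}(L^{q_1,\infty})}$ while showing that, in the opposite direction of the proof of (\ref{eq: strong weak norm}), the weak $L^{q_1}$ norm captured exactly $|y|^{-n/q_2}$ rather than something smaller; all other steps are routine radial integrations together with triangle-inequality manipulations of $|x+y|+|x-y|$.
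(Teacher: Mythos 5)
Your proof takes essentially the paper's approach on the positive bound \eqref{eq: strong weak norm} and on the failure of \eqref{eq: weak strong norm}: same bound $(|x+y|+|x-y|)^{-1}\le \tfrac12\min\{|x|^{-1},|y|^{-1}\}$ and the same radial splitting for the former, and for the latter the same homogeneous kernel $(|x|+|y|)^{-n/q_1-n/q_2}$ cut off to an annulus in $y$ (the extra constraint $|y|\le|x|$ you carry over from the proof of Theorem~\ref{thm:weak norm} is harmless; the paper just drops it, which simplifies the level-set computation a little). Where you genuinely diverge is in the two endpoint failures \eqref{eq: endpoint strong weak norm} and \eqref{eq: endpoint weak strong norm}: the paper builds the family $F_N(x,y)=(|x|+|y|)^{-n/q_1}\chi^{}_{[1,N]}(|x|)\chi^{}_{[1,N]}(|y|)$, computes finite norms growing like $(\ln N)^{1/q_1}$ while the right-hand side stays bounded, and lets $N\to\infty$. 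You instead plug in the single function $F(x,y)=(|x+y|+|x-y|)^{-n/q_1}$, for which the right-hand side is $1$ but both left-hand norms are outright infinite, which disproves the inequality in one stroke. That is a legitimate and in fact slicker argument; the paper's choice of a finitely-normed family is not logically necessary here, only aesthetically tidier. One small slip in your write-up: you cite $|x+y|+|x-y|\ge 2|x|$ to justify $\int_{\mathbb{R}^n}(|x+y|+|x-y|)^{-n}\,dx=\infty$, but that inequality bounds the integrand \emph{above}, not below. What you actually want is the reverse bound $|x+y|+|x-y|\le 2(|x|+|y|)\lesssim |x|$ for $|x|\ge|y|$, so the integrand is $\gtrsim |x|^{-n}$ on that region, which is nonintegrable at infinity; the conclusion stands with this correction. (If you wish to avoid the point-value $F(0,0)=\infty$, multiply your test function by $\chi^{}_{\{|y|\ge 1\}}$; both endpoint computations go through unchanged.)
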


    \begin{proof}
    (1) Suppose that $\sup\limits_{x,y \in \mathbb{R}^{n}} F(x,y) (|x+y|+|x-y|)^{ n/q_1 + n/q_2  }=s<\infty$.
    Then
    $$\Big(\int_{\mathbb{R}^{n}} |F(x,y)|^{q_1} dx\Big)^{1/q_1} \leq s \left(\int_{\mathbb{R}^{n}} \frac{1}{(|x+y|+|x-y|)^{n+ nq_1/q_2}} dx \right)^{1/q_1}.$$
    For any $y$, denote
    $M_{y}=\{x:\, |x|<|y|\}$ and $N_{y}=\{x:\, |x| \geq |y|\}$.
    Note that
    $$\frac{1}{|x+y|+|x-y|} \leq \min\Big\{\frac{1}{2|x|}, \frac{1}{2|y|}\Big\}, \qquad  \forall \ x, y \neq 0.$$
     Then by calculation
    \begin{align*}
    &\ \ \ \ \ \ \ \ s \left(\int_{\mathbb{R}^{n}} \frac{1}{(|x+y|+|x-y|)^{n+nq_1/q_2 }} dx \right)^{1/q_1}\\
    &=  s \left(  \Big(\int_{M_{y}}+ \int_{N_{y}} \Big) \frac{1}{(|x+y|+|x-y|)^{n+nq_1/q_2 }} dx  \right)^{1/q_1} \\
    &\leq  s \left(\int_{M_{y}}   \Big(\frac{1}{2|y|}\Big)^{n+nq_1/q_2 }  dx +   \int_{N_{y}}  \Big(\frac{1}{2|x|}\Big)^{n+nq_1/q_2 } dx  \right)^{1/q_1}\\
    &= C_{\vec q,n} s |y|^{-n/q_2}.
    \end{align*}
    Therefore,
    \begin{align*}
    \|F\|_{L^{q_2,\infty}(L^{q_1})} &= \Big\|  \| F\|_{L_{x}^{q_1}}  \Big\|_{L_{y}^{q_2,\infty}}
     \\
        &\leq \sup_{\alpha>0} \alpha \left|\left\{y:    C_{\vec q,n} s |y|^{-n/q_2}> \alpha  \right\}\right|^{1/q_2} \leq C_{\vec q,n} s.
    \end{align*}

    (2) We consider
    $$F_{N}(x,y)=\left(\frac{1}{|x|+|y|}\right)^{ n/q_1 + n/q_2  } \chi_{ \{(x,y):\, 1\leq |y| \leq N \}}, \quad  N\in \mathbb{N}.$$
    Clearly,
    $$\sup\limits_{x,y \in \mathbb{R}^{n}} F_N(x,y) (|x+y|+|x-y|)^{ n/q_1 + n/q_2  } \leq 2^{ n/q_1 + n/q_2  }.$$
    On the other hand,
    \begin{align*}
    \sup_{\alpha>0}  \alpha | \{x:\, |F_{N}(x,y)|>\alpha\}|^{1/q_1}
    &=   \sup_{\alpha>0}  \alpha \left| \left\{x\!:   (|x|+|y|)^{ n/q_1 + n/q_2  }<  \frac{1}{\alpha}  \right \} \right|^{1/q_1} \\
    &\hskip -7mm=  \sup_{\alpha>0}  \alpha  \left| \left\{x\!:  |x|<  (\frac{1}{\alpha})^{q_1q_2/(nq_1+nq_2)} -|y|  \right \} \right|^{1/q_1}.
    \end{align*}
    Choosing $\alpha=(\frac{1}{2|y|})^{ n/q_1 + n/q_2  }$, we have
    \begin{align*}
    \sup_{\alpha>0}  \alpha | \{x:\, |F_N(x,y)|>\alpha\}|^{1/q_1}
    &\geq \Big(\frac{1}{2|y|}\Big)^{ n/q_1 + n/q_2  }  |  \{x:\,  |x|< |y|  \} |^{1/q_1} \\
    &= C_{\vec q,n} |y|^{-n/q_2}.
    \end{align*}
    Hence
    $$ \|F_N\|_{L^{q_2}(L^{q_1,\infty})} =   \Big\|  \| F_N\|_{L_{x}^{q_1,\infty}}  \Big\|_{L_{y}^{q_2}}
    \gtrsim_{\vec q, n} (\int_{1\leq |y| \leq N} \frac{dy}{|y|^{n}})^{1/q_2}=(\ln  N)^{1/q_2}.$$
    Letting $N \to \infty$,   we see a contradiction.

    (3) We use the similar counterexample to show (\ref{eq: endpoint strong weak norm}) and (\ref{eq: endpoint weak strong norm}).
    Let $$F_{N}(x,y)=\left(\frac{1}{|x|+|y|}\right)^{ n/q_1 } \chi^{}_{[1, N] }(|x|)\chi^{}_{[1, N] }(|y|), \quad  N\in \mathbb{N}.$$
    Then $\sup\limits_{x,y \in \mathbb{R}^{n}} F_{N}(x,y) (|x+y|+|x-y|)^{n/q_1} \leq 2^{n/q_1}$.
    While,
    \begin{align*}
    \| F_{N} \|_{L_{x}^{q_1}}
    &=\left(\int_{\{x:\, 1\leq |x| \leq N\}}  \frac{dx}{(|x|+|y|)^{n}}\right)^{1/q_1} \\
    &\geq  \left(\int_{\{x:\, |y| \leq |x| \leq N \}}  \frac{dx}{2^{n}|x|^{n}} \right)^{1/q_1}
    = C_{q_1,n}  (\ln  N - \ln  |y|)^{1/q_1}
    \end{align*}
    and
    $$\| F_{N}\|_{L_{x}^{\infty}} =(1+|y|)^{-n/q_1}.$$
    Consequently,
    $$\|F_{N}\|_{L^{\infty}(L^{q_1})} = \Big\|  \| F_{N}\|_{L_{x}^{q_1}}  \Big\|_{L_{y}^{\infty}} \gtrsim_{q_1,n} (\ln  N)^{1/q_1} $$
    and
    \begin{align*}
    \|F_N\|_{L^{q_1}(L^{\infty})}
    &= \Big\|  \| F_N\|_{L_{x}^{\infty}}  \Big\|_{L_{y}^{q_1}} \\
    &=\left(\int_{\{y:\, 1\leq |y| \leq N\}}  \frac{1}{(1+|y|)^{n}} dy \right)^{1/q_1} \\
    &\geq \left(\int_{\{y:\, 1\leq |y| \leq N\}}  \frac{1}{(2 |y|)^{n}} dy \right)^{1/q_1}
    =C_{q_1,n} (\ln  N)^{1/q_1}.
    \end{align*}
    Letting $N \to \infty$ we have a contradiction.
    That gives the endpoint cases  (\ref{eq: endpoint strong weak norm}) and (\ref{eq: endpoint weak strong norm}).
    Obviously, the other two endpoint cases (\ref{eq: endpoint weak weak norm1}) and (\ref{eq: endpoint weak weak norm2})
    follow from the endpoint cases of (\ref{eq:weak weak norm}).

    \end{proof}

    From (\ref{eq:weak weak norm}) we get the following geometric inequalities  which were studied in \cite{Chen2017}.

    \begin{Corollary} \label{Co:Chen}
    For all $0<p_1, p_2 \leq \infty$,
    \begin{equation}\label{eq: separate weak}
    \|f\|_{L^{p_1,\infty}} \|g\|_{L^{p_2,\infty}}  \leq C_{\vec p,n}  \sup\limits_{x,y \in \mathbb{R}^{n}} f(x) g(y) |x-y|^{ n/p_1 + n/p_2  }
    \end{equation}
    holds for any $f\in L^{p_1, \infty}$, $g\in L^{p_2, \infty}$.

    \medskip

    Furthermore, by interpolation
    \begin{equation}\label{eq: separate}
    \|f\|_{L^{p_1}} \|g\|_{L^{p_2}}  \leq C_{\vec p,n}  \sup\limits_{x,y \in \mathbb{R}^{n}} f(x) g(y) |x-y|^{ n/p_1 + n/p_2  }
    \end{equation}
    holds for any $f\in L^{p_1}$, $g\in L^{p_2}$.
    \end{Corollary}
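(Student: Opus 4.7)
The plan is to establish the weak-weak inequality (\ref{eq: separate weak}) first, by the geometric concentration argument behind Theorem~\ref{thm:weak norm}, and then upgrade to the strong-strong version (\ref{eq: separate}) by real interpolation. I would set $M := \sup_{x,y} f(x)g(y)|x-y|^{\gamma}$ with $\gamma=n/p_1+n/p_2$ and assume $M<\infty$. Whenever $f(x)>\lambda_1$ and $g(y)>\lambda_2$, the definition of $M$ forces $|x-y|\le r:=(M/(\lambda_1\lambda_2))^{1/\gamma}$; fixing any point of $\{f>\lambda_1\}$ thus confines $\{g>\lambda_2\}$ to a ball of radius $r$, and symmetrically, so both sets have Lebesgue measure at most $v_nr^n$. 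Because $\gamma$ equals $n/p_1+n/p_2$ exactly, the weak-norm product telescopes:
\begin{equation*}
\lambda_1|\{f>\lambda_1\}|^{1/p_1}\cdot\lambda_2|\{g>\lambda_2\}|^{1/p_2}\le v_n^{1/p_1+1/p_2}\lambda_1\lambda_2\, r^{\gamma}=v_n^{1/p_1+1/p_2}M.
\end{equation*}
Taking suprema in $\lambda_1,\lambda_2$ yields (\ref{eq: separate weak}); the endpoints $p_i=\infty$ fit the same scheme under the convention $|E|^{1/\infty}=1$ on nonempty $E$.

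For (\ref{eq: separate}), the decisive observation is that the right side of (\ref{eq: separate weak}) depends on $(p_1,p_2)$ only through the sum $1/p_1+1/p_2$. Hence (\ref{eq: separate weak}) remains valid for every pair $(\tilde p_1,\tilde p_2)$ with $1/\tilde p_1+1/\tilde p_2=1/p_1+1/p_2$. I would pick two such perturbed pairs by setting $1/\tilde p_i^{\pm}=1/p_i\mp\epsilon$ for a small $\epsilon>0$, so that $\tilde p_i^{-}<p_i<\tilde p_i^{+}$ and both $(\tilde p_1^{-},\tilde p_2^{+})$ and $(\tilde p_1^{+},\tilde p_2^{-})$ are valid index pairs. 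Proposition~\ref{prop: interpolation} with the symmetric parameter $\theta=1/2$ gives $\|f\|_{L^{p_1}}\lesssim \|f\|_{L^{\tilde p_1^{-},\infty}}^{1/2}\|f\|_{L^{\tilde p_1^{+},\infty}}^{1/2}$ and the analogue for $g$; multiplying and regrouping so that each factor pairs $\|f\|_{L^{\tilde p_1^{\pm},\infty}}$ with $\|g\|_{L^{\tilde p_2^{\mp},\infty}}$, and then invoking (\ref{eq: separate weak}) on each resulting pair, delivers
\begin{equation*}
\|f\|_{L^{p_1}}\|g\|_{L^{p_2}}\lesssim \bigl(\|f\|_{L^{\tilde p_1^{-},\infty}}\|g\|_{L^{\tilde p_2^{+},\infty}}\bigr)^{1/2}\bigl(\|f\|_{L^{\tilde p_1^{+},\infty}}\|g\|_{L^{\tilde p_2^{-},\infty}}\bigr)^{1/2}\lesssim M.
\end{equation*}

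The main obstacle is arranging the correct pairing in the interpolation, so that the two resulting factors actually respect the bilinear constraint $1/\tilde p_1+1/\tilde p_2=1/p_1+1/p_2$; the symmetric midpoint choice $\theta=1/2$ lines the indices up exactly, which is why the perturbations are forced to be opposite on the two factors. A secondary point is that the perturbation scheme demands $\epsilon$ small enough that all four $\tilde p_i^{\pm}$ remain in the admissible range, so the strong-strong step is naturally restricted to finite $p_1,p_2$, while the weak-weak step already subsumes every endpoint case.
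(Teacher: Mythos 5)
Your proof is correct and more elementary than the paper's. For (\ref{eq: separate weak}), the paper sets $F=f\otimes g$ and applies (\ref{eq:weak weak norm}) from Theorem~\ref{thm:weak norm}, which only yields the bound with the larger quantity $(|x+y|+|x-y|)^{n/p_1+n/p_2}$ on the right; to recover the $|x-y|^{n/p_1+n/p_2}$ stated in the corollary, the paper then appeals to the rearrangement inequality from \cite{Chen2017} at the end. Your direct geometric argument, that any point of $\{f>\lambda_1\}$ confines $\{g>\lambda_2\}$ to a ball of radius $r=(M/(\lambda_1\lambda_2))^{1/\gamma}$ and symmetrically, so that $\lambda_1\lambda_2|\{f>\lambda_1\}|^{1/p_1}|\{g>\lambda_2\}|^{1/p_2}\le v_n^{\gamma/n}M$, gives the $|x-y|$ version directly with an explicit constant, bypassing both Theorem~\ref{thm:weak norm} and rearrangement. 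For (\ref{eq: separate}), both you and the paper apply Proposition~\ref{prop: interpolation} to $f$ and $g$ separately and then regroup so that each resulting pair satisfies $1/\tilde p_1+1/\tilde p_2=\gamma/n$; the paper interpolates each norm between the extreme exponents $n/\gamma$ and $\infty$ with the asymmetric weight $\theta=p_2/(p_1+p_2)$, while you take symmetric $\epsilon$-perturbations of $(p_1,p_2)$ with $\theta=1/2$. Both pairings make the homogeneity cancel exactly; the paper's choice avoids the auxiliary parameter $\epsilon$, while yours makes the cancellation perhaps more transparent. Your remark that the strong inequality at an endpoint $p_i=\infty$ is already subsumed by the weak case (since $L^\infty=L^{\infty,\infty}$) is also correct.
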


    \begin{proof}
    Let $F(x,y)=f(x)g(y)$. Obviously
    $$ \|F\|_{L^{p_2,\infty}(L^{p_1,\infty})}  = \|f\|_{L^{p_1,\infty}} \|g\|_{L^{p_2,\infty}}.$$
    By (\ref{eq:weak weak norm}), we obtain that for all $f\in L^{p_1, \infty}$ and $g\in L^{p_2, \infty}$,
    $$\|f\|_{L^{p_1,\infty}} \|g\|_{L^{p_2,\infty}}  \leq C_{\vec p,n}  \sup\limits_{x,y \in \mathbb{R}^{n}} f(x) g(y) (|x+y|+|x-y|)^{ n/p_1 + n/p_2  }.$$

    By interpolation, it is known that
    for all $0< p_1 < r < p_2 \leq \infty$,
    \begin{equation}\label{eq: norm interpolation}
    \|f\|_{r} \leq C_{\vec p,r} \|f\|_{p_1, \infty}^{\theta} \|f\|_{p_2, \infty}^{1-\theta},
    \end{equation}
    where
    $$\frac{1}{r}=\frac{\theta}{p_1}+\frac{1-\theta}{p_2}.$$
    Denote $\gamma= n/p_1 + n/p_2  $.
    It follows from (\ref{eq:weak weak norm}) that
    $$\|f\|_{L^{{n}/{\gamma},\infty}} \|g\|_{L^{\infty}}
    \leq C_{\vec p,n}  \sup\limits_{x,y \in \mathbb{R}^{n}} f(x) g(y) (|x+y|+|x-y|)^{\gamma} $$
    and
    $$\|f\|_{L^{\infty}}  \|g\|_{L^{{n}/{\gamma}, \infty}}
    \leq C_{\vec p,n}  \sup\limits_{x,y \in \mathbb{R}^{n}} f(x) g(y) (|x+y|+|x-y|)|^{\gamma}.$$
    Since
    \[
      \frac{1}{p_1}  = \frac{\gamma}{n} \cdot \frac{p_2}{p_1+p_2} + \frac{1}{\infty}\cdot \frac{p_1}{p_1  + p_2}
      \mathrm{\,\,\,\, and \,\,\,\, }
      \frac{1}{p_2}   = \frac{\gamma}{n} \cdot \frac{p_1}{p_1+p_2} + \frac{1}{\infty}\cdot \frac{p_2}{p_1 + p_2},
    \]
    we see from  (\ref{eq: norm interpolation}) that
    \begin{align*}
    \|f\|_{L^{p_1}} \|g\|_{L^{p_2}}
    &\le C_{\vec p,n} \|f\|_{L^{{n}/{\gamma},\infty}}^{p_2/(p_1+p_2)}
    \|f\|_{L^{\infty}}^{p_1/(p_1+p_2)}
    \|g\|_{L^{{n}/{\gamma},\infty}}^{p_1/(p_1+p_2)}
    \|g\|_{L^{\infty}}^{p_2/(p_1+p_2)}
     \\
    &\le C_{\vec p,n}  \sup\limits_{x,y \in \mathbb{R}^{n}} f(x) g(y) (|x+y|+|x-y|)^{ n/p_1 + n/p_2  }.
    \end{align*}

   From the rearrangement inequality   \cite[Theorem 4.1]{Chen2017}  we have
    \begin{align*}
    & \ \ \ \sup\limits_{x,y \in \mathbb{R}^{n}} f(x) g(y) |x-y|^{ n/p_1 + n/p_2  } \\
    &\geq  \sup\limits_{x,y \in \mathbb{R}^{n}} f^{\ast}(x) g^{\ast}(y) |x-y|^{ n/p_1 + n/p_2  } \\
    &=\frac{1}{2} \sup\limits_{x,y \in \mathbb{R}^{n}} f^{\ast}(x) g^{\ast}(y) |x-y|^{ n/p_1 + n/p_2  }
    +\frac{1}{2} \sup\limits_{x,y \in \mathbb{R}^{n}} f^{\ast}(x) g^{\ast}(y) |x+y|^{ n/p_1 + n/p_2  } \\
    &\geq  \frac{1}{2} \sup\limits_{x,y \in \mathbb{R}^{n}}  f^{\ast}(x) g^{\ast}(y) \big(|x+y|^{ n/p_1 + n/p_2} + |x-y|^{ n/p_1 + n/p_2}) \\
    &\geq   C_{\vec p,n} \sup\limits_{x,y \in \mathbb{R}^{n}} f^{\ast}(x) g^{\ast}(y)
         (|x+y|+|x-y|)^{ n/p_1 + n/p_2  },
    \end{align*}
    where $f^{\ast}$ is the symmetric decreasing rearrangement of $f$.
    Therefore, from
    $$\|f^{\ast}\|_{L^{p_1}} \|g^{\ast}\|_{L^{p_2}} \le C_{\vec p,n}  \sup\limits_{x,y \in \mathbb{R}^{n}} f^{\ast}(x) g^{\ast}(y) (|x+y|+|x-y|)^{ n/p_1 + n/p_2  },$$
    together with the fact that
    $$\|f\|_{L^{p_1}} =\|f^{*}\|_{L^{p_1}},  \|g\|_{L^{p_2}} =\|g^{*}\|_{L^{p_2}},$$
    we get
    \begin{align*}
      \|f\|_{L^{p_1}} \|g\|_{L^{p_2}}
    &=\|f^{*}\|_{L^{p_1}} \|g^{*}\|_{L^{p_2}} \\
    &\leq  C_{\vec p,n}  \sup\limits_{x,y \in \mathbb{R}^{n}} f^{\ast}(x) g^{\ast}(y) (|x+y|+|x-y|)^{ n/p_1 + n/p_2  }  \\
    &\leq C_{\vec p,n} \sup\limits_{x,y \in \mathbb{R}^{n}} f(x) g(y) (|x-y|)^{ n/p_1 + n/p_2  }.
    \end{align*}
   That completes the proof.
    \end{proof}

Next we study the boundedness of $T_{\gamma}$ for general indices.

\begin{Theorem} \label{thm:weak weak norm fractional}
Let $f$ be a nonnegative measurable function defined on $\mathbb{R}^{2n}$.

\begin{enumerate}
\item
 For all $0<q_1 \leq p_1\leq\infty$ and $0<q_2 \leq p_2\leq \infty$ satisfying the homogeneity condition
\begin{equation}\label{eq:homogeneity a}
\frac{1}{q_1}+\frac{1}{q_2}=  \frac{1}{p_1}+\frac{1}{p_2}  +\frac{\gamma}{n},
\end{equation}
we have
\begin{equation}\label{eq:fractional}
\|T_{\gamma} f\|_{L^{q_2,\infty}(L^{q_1,\infty})}  \leq C_{\vec p,\vec q, n} \|f\|_{L^{p_2,\infty}(L^{p_1,\infty})}.
\end{equation}

\item
For all $0<p_1 \leq q_1\leq\infty$ and $0<p_2 \leq q_2\leq\infty$ satisfying the homogeneity condition
\begin{equation}\label{eq:homogeneity b}
 \frac{1}{p_1}+\frac{1}{p_2} = \frac{1}{q_1}+\frac{1}{q_2} +\frac{\gamma}{n},
\end{equation}
we have
\begin{equation}\label{eq:reverse fractional}
\|T^{-1}_{\gamma} f\|_{L^{q_2,\infty}(L^{q_1,\infty})}  \geq C_{\vec p, \vec q,n} \|f\|_{L^{p_2,\infty}(L^{p_1,\infty})}.
\end{equation}

\item
For all $0<q_1 \le p_1 \le  \infty$ and $0<q_2 \le p_2 \le  \infty$ satisfying
$p_1 q_2=p_2 q_1$ and  the homogeneity condition (\ref{eq:homogeneity a}),
we have
\begin{equation}\label{eq:fractional 1}
\|T_{\gamma} f\|_{L^{\vec q, \infty}}  \leq C_{\vec p,\vec q, n} \|f\|_{L^{\vec p,\infty}}.
\end{equation}

\item
For all $0<p_1 \le q_1\le  \infty$ and $0<p_2 \le q_2 \le  \infty$ satisfying
$p_1 q_2=p_2 q_1$ and the homogeneity condition (\ref{eq:homogeneity b}),
we have
\begin{equation}\label{eq:reverse fractional 1}
\|T^{-1}_{\gamma} f\|_{L^{\vec q, \infty}}  \leq C_{\vec p,\vec q, n} \|f\|_{L^{\vec p,\infty}}.
\end{equation}

\end{enumerate}
\end{Theorem}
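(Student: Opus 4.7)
The plan is to treat all four inequalities uniformly by recognizing that $T_{\gamma}$ is pointwise multiplication: setting $h(x,y) = (|x+y|+|x-y|)^{\gamma}$, one has $T_{\gamma}f = f\cdot h^{-1}$ and $T_{\gamma}^{-1}f = f\cdot h$. Every claim then reduces, via H\"older's inequality for the appropriate weak norm, to knowing the weak-type size of $h^{-1}$. This is already in the paper: whenever $n/s_1+n/s_2=\gamma$, Theorem~\ref{thm:weak norm} applied to $F=h^{-1}$ yields
\[
   \|h^{-1}\|_{L^{\vec s,\infty}}\le C\quad\text{and}\quad \|h^{-1}\|_{L^{s_2,\infty}(L^{s_1,\infty})}\le C,
\]
since $h^{-1}(x,y)(|x+y|+|x-y|)^{n/s_1+n/s_2}\equiv 1$.

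For part (i), I would define $1/s_i := 1/q_i - 1/p_i$, which is nonnegative because $q_i\le p_i$ and which sums to $\gamma/n$ by the homogeneity condition (\ref{eq:homogeneity a}). H\"older's inequality for iterated weak norms (stated just before Theorem~\ref{thm: holder mixed weak norm}), applied to $T_{\gamma}f = f\cdot h^{-1}$, gives
\[
   \|T_{\gamma}f\|_{L^{q_2,\infty}(L^{q_1,\infty})}
   \lesssim \|f\|_{L^{p_2,\infty}(L^{p_1,\infty})}\cdot\|h^{-1}\|_{L^{s_2,\infty}(L^{s_1,\infty})}
   \lesssim \|f\|_{L^{p_2,\infty}(L^{p_1,\infty})}.
\]
The proof of (iii) is identical, but with Theorem~\ref{thm: holder mixed weak norm} in place of iterated-weak H\"older. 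The extra compatibility hypothesis $p_1 s_2 = p_2 s_1$ required there follows from a short algebraic manipulation: the assumption $p_1 q_2 = p_2 q_1$ says $p_1/p_2 = q_1/q_2$, and then the defining relation $1/s_i = 1/q_i - 1/p_i$ forces $s_1/s_2 = p_1/p_2$ automatically.

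Parts (ii) and (iv) reduce to (i) and (iii) respectively through the change of variable $g := T_{\gamma}^{-1}f$, so that $f = T_{\gamma}g$. Each claim then becomes
\[
   \|T_{\gamma} g\|_{L^{p_2,\infty}(L^{p_1,\infty})}\lesssim \|g\|_{L^{q_2,\infty}(L^{q_1,\infty})}
\]
(and the mixed-weak analogue for (iv)), whose hypotheses are exactly those of (i) (respectively (iii)) after the symmetric swap $p\leftrightarrow q$: the homogeneity condition (\ref{eq:homogeneity b}) swaps into (\ref{eq:homogeneity a}), and the identity $p_1 q_2 = p_2 q_1$ appearing in the statement of (iv) is preserved. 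The only point requiring real verification is the algebraic matching $s_1/s_2 = p_1/p_2$ in part (iii); no analytic obstacle arises, because the weak-norm estimate on $h^{-1}$ has been outsourced entirely to Theorem~\ref{thm:weak norm} and the rest is an application of the already-established H\"older inequalities.
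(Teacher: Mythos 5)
Your proof is correct and follows essentially the same route as the paper's: write $T_\gamma$ as pointwise multiplication by $h^{-1}$, establish the weak-norm bound on $h^{-1}$ for exponents summing to $\gamma/n$, apply the appropriate H\"older inequality (iterated-weak for (i), mixed-weak together with the compatibility check $p_1 s_2 = p_2 s_1$ for (iii)), and deduce (ii), (iv) from (i), (iii) by writing $f = T_\gamma T_\gamma^{-1}f$. The one tidy improvement in your argument is obtaining $\|h^{-1}\|_{L^{\vec s,\infty}}\lesssim 1$ and $\|h^{-1}\|_{L^{s_2,\infty}(L^{s_1,\infty})}\lesssim 1$ as the trivial $\sup\equiv 1$ case of Theorem~\ref{thm:weak norm}, whereas the paper re-derives the iterated-weak estimate in part (i) and, in part (iii), instead points to the computation inside the proof of Theorem~\ref{thm:interpolation}; both lead to the same bound, but your citation is cleaner.
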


  \begin{proof}
  (i).  First, we prove (\ref{eq:fractional})  and  (\ref{eq:reverse fractional}).
  Let $g(x,y)= (|x+y|+|x-y|)^{-\gamma} $.
    To prove (\ref{eq:fractional}),
    it suffices to show that  $g\in L^{r_2,\infty}(L^{r_1,\infty})$ for all $0<r_1, r_2 \leq \infty$ with
    $1/r_1 + 1/r_2 = \gamma/n$.

    In fact, if this is the case, set
    \[
     \frac{1}{r_1} :=  \frac{1}{q_1}-\frac{1}{p_1} , \qquad
     \frac{1}{r_2} :=  \frac{1}{q_2}-\frac{1}{p_2}  .
    \]
    Then we have $1/r_1 + 1/r_2 = \gamma/n$.
    Applying  H\"older's inequality gives   that
    \begin{equation}\label{eq:holder inequality}
    \|T_{\gamma} f\|_{L^{q_2,\infty}(L^{q_1,\infty})} \leq  \|f\|_{L^{p_2,\infty}(L^{p_1,\infty})}  \|g\|_{L^{r_2,\infty}(L^{r_1,\infty})},
    \end{equation}
    which is (\ref{eq:fractional}).
    Note if $p_1=\infty$ or $p_2=\infty$, (\ref{eq:holder inequality}) follows easily.

    First, we consider the endpoints.
    When $r_1=\infty$ and $r_2=n/\gamma$, we have
    $$(|x+y|+ |x-y|)^{-\gamma} \leq \frac{1}{(2|y|)^{\gamma}},  \ \forall \ y \neq 0.$$
    So
    $$\|g\|_{L^{(n/\gamma, \infty)}(L^{\infty})}  \leq C_{n, \gamma}.$$
    When $r_1=n/\gamma$ and  $r_2=\infty$, we have
    \[
      \sup_{\alpha>0}   \alpha | \{x:\, (|x+y|+|x-y|)^{-\gamma}>\alpha \} |^{\gamma /n} \leq C_{n, \gamma},
    \quad \forall \ y\in \mathbb{R}^{n}.
    \]

    It remains to verify the case when $0<r_1, r_2 <\infty$.
    If $|y|< \alpha^{-1/ \gamma}$,
    $$\alpha | \{x:\, |g(x,y)|>\alpha \} |^{1/r_1} \leq C_{r_1,n} \alpha^{1-n/(\gamma r_1)}.$$
    If $|y| \geq \alpha^{-1/ \gamma}$,  it is not hard to see that $\alpha | \{x:\, |g(x,y)|>\alpha \} |^{1/r_1} =0$.
    Thus for any $y \neq 0$,
    $$\sup_{\alpha>0}  \alpha | \{x:\, |g(x,y)|>\alpha \}|^{1/r_1} \leq C_{r_1,n} |y|^{n/r_1-\gamma}.$$
    Then by definition,
    \begin{align*}
    \|g\|_{L^{r_2,\infty}(L^{r_1,\infty})}
    &= \Big\|  \| g\|_{L_{x}^{r_1,\infty}}  \Big\|_{L_{y}^{r_2,\infty}} \\
    &=\sup_{\beta>0} \beta \left|\left\{y:\, \sup_{\alpha>0}  \alpha | \{x:\, |g(x,y)|>\alpha\}|^{1/r_1} > \beta  \right\}\right|^{1/r_2} \\
    &\leq  \sup_{\beta>0} \beta \left|\left\{y:\,  C_{r_1,n}  |y|^{n/r_1-\gamma} > \beta  \right\}\right|^{1/r_2}
    =C_{\vec r,n}.
    \end{align*}
    Hence (\ref{eq:fractional}) is true.

    On the other hand,   we see from (\ref{eq:fractional})  that
    \begin{align*}
    \|f\|_{L^{p_2,\infty}(L^{p_1,\infty})}
    &=\| T_{\gamma} T^{-1}_{\gamma} f\|_{L^{p_2,\infty}(L^{p_1,\infty})}
     \lesssim \|T^{-1}_{\gamma} f\|_{L^{q_2,\infty}(L^{q_1,\infty})}.
    \end{align*}
    Hence  (\ref{eq:reverse fractional}) is also true.

    (ii). Next we prove  (\ref{eq:fractional 1})  and   (\ref{eq:reverse fractional 1}).
    Suppose that $0<p_1,p_2, q_1, q_2 < \infty$ and $1/q_1+1/q_2=1/p_1+1/p_2+\gamma/n$.
    Then there always exist $r_1$, $r_2$ satisfying  $1/r_1+1/r_2=\gamma/n$ such that
    $p_1/p_2=r_1/r_2$.

    In retrospect, it follows from
    the proof of Theorem~\ref{thm:interpolation} that
    for all $\vec r=(r_1, r_2)$ satisfying $1/r_1+1/r_2=\gamma/n$,
    $$(|x+y|+|x-y|)^{-\gamma} \in L^{\vec r, \infty}.$$
    By H\"older's inequality in Theorem~\ref{thm: holder mixed weak norm},
    $$\|T_{\gamma} f\|_{L^{\vec q,\infty}}  \lesssim_{\vec p, \vec q, n}   \|f\|_{L^{\vec p,\infty}}  \|(|x+y|+|x-y|)^{-\gamma}\|_{L^{\vec r, \infty}},$$
    which gives (\ref{eq:fractional 1}).
    When $p_1=q_1=\infty$, $p_2=q_2=\infty$, or $p_1=p_2=\infty$,
    as shown in Theorem~\ref{thm: holder mixed weak norm}, H\"older's inequality still holds for these  cases.
    Taking similar arguments we conclude (\ref{eq:fractional 1}).

    Likewise, it follows from (\ref{eq:fractional 1}) that
    \begin{align*}
    \|f\|_{L^{\vec p,\infty}}
    &=\| T_{\gamma} T^{-1}_{\gamma} f\|_{L^{\vec p,\infty}}    \lesssim \|T^{-1}_{\gamma} f\|_{L^{\vec q,\infty}}.
    \end{align*}
    Hence  (\ref{eq:reverse fractional 1}) is also true.

    \end{proof}

    \begin{Remark}
    We point out that the condition
    $1/q_1 + 1/q_2= 1/p_1 + 1/p_2 +  \gamma/n$ follows from the homogeneity.
    \end{Remark}

    Suppose that (\ref{eq:fractional}) is true.
    Then we consider functions $f(\frac{\cdot}{R}, \frac{\cdot}{R})$ for   $R>0$.
    On the one hand,
    $$\|f(\frac{\cdot}{R}, \frac{\cdot}{R})\|_{L^{p_2,\infty}(L^{p_1,\infty})}= R^{ n/p_1 + n/p_2  } \|f\|_{L^{p_2,\infty}(L^{p_1,\infty})}.$$
    On the other hand,
     \begin{align*}
    &\ \ \ \sup_{\alpha>0}  \alpha \left| \left\{x:\, f(\frac{x}{R},\frac{y}{R}) (|x+y|+ |x-y|)^{-\gamma} >\alpha \right\} \right|^{1/q_1} \\
    &=\sup_{\alpha>0}  \alpha  \left| \left\{x:\, f(\frac{x}{R},\frac{y}{R}) \left( \left|\frac{x}{R}+\frac{y}{R}\right|+ \left|\frac{x}{R}-\frac{y}{R} \right| \right)^{-\gamma} >R^{\gamma}\alpha \right\} \right|^{1/q_1} \\
    &=\sup_{\alpha>0}  \alpha R^{n/q_1} \left| \left\{x:\, f(x,\frac{y}{R}) \left(\left|x+\frac{y}{R}\right|+ \left|x-\frac{y}{R}\right|\right)^{-\gamma} >R^{\gamma}\alpha \right\} \right|^{1/q_1} \\
    &=R^{ n/q_1 -\gamma} \sup_{\alpha>0}  \alpha  \left| \left\{x:\, f(x,\frac{y}{R}) \left(\left|x+\frac{y}{R}\right|+ \left|x-\frac{y}{R}\right|\right)^{-\gamma} >\alpha \right\} \right|^{1/q_1}\\
    & = R^{ n/q_1 -\gamma}  \|T_{\gamma} f(\cdot , \frac{y}{R}) \|_{L^{q_1,\infty}_x}.
    \end{align*}
    Hence
    \begin{align*}
    &\hskip -10mm \Big\|T_{\gamma} f(\frac{\cdot}{R}, \frac{\cdot}{R})\Big\|_{L^{q_2,\infty}(L^{q_1,\infty})} \\
    &=R^{n/q_1-\gamma}  \sup\limits_{\beta>0}  \beta \left| \left\{y:\,
      \|T_{\gamma} f(\cdot, \frac{y}{R}) \|_{L^{q_1,\infty}_x} >\beta \right\} \right|^{1/q_2} \\
    &=R^{n/q_1 + n/q_2  -\gamma}   \sup\limits_{\beta>0}  \beta | \{y:\,
     \|T_{\gamma} f(\cdot , y) \|_{L^{q_1,\infty}_x}
     >\beta\}|^{1/q_2} \\
    &=R^{n/q_1 + n/q_2 -\gamma} \|T_{\gamma} f\|_{L^{q_2,\infty}(L^{q_1,\infty})}.
    \end{align*}
    These imply that for all $R>0$,
    $$R^{n/q_1 + n/q_2 -\gamma} \lesssim R^{ n/p_1 + n/p_2  },$$
    which gives the condition
    $$\frac{1}{q_1}+\frac{1}{q_2}=\frac{1}{p_1}+\frac{1}{p_2}  +\frac{\gamma}{n}.$$

\begin{Remark}
We illustrate that
(\ref{eq:fractional 1}) might be false if $p_1q_2  \ne p_2q_1$.
\end{Remark}

\begin{proof}
(i).\,\,  $p_1=q_1$ and $p_2 > q_2$.

 Let $f(x,y)= (|x+y|+|x-y|)^{\gamma} \chi^{}_{E}(x,y)$,
  where $E=\{(x,y):\, 0<|x|<|y|^{-\alpha}, 1\le |y|\le N\}$, $\alpha$ and $N$ are constants.
Then we have
\begin{align*}
\|T_{\gamma}f\|_{L^{\vec q, \infty}}
&=  \|\chi^{}_{E} \|_{L^{\vec q}}
   = \Big (\int_{1\le |y|\le N}  |y|^{-n \alpha q_2/q_1} dy\Big)^{1/q_2} \\
&\approx  \Big (\int_1^N  t^{-n \alpha q_2/q_1+n-1} dt\Big)^{1/q_2}
\end{align*}
and
\begin{align*}
\|f\|_{L^{\vec p}}
  &\le \Big( \int_{1\le |y|\le N}  |y|^{\gamma p_2 -n \alpha p_2/p_1} dy\Big)^{1/p_2}
  \lesssim \Big( \int_1^N   t^{\gamma p_2 -n \alpha p_2/p_1+n-1} dt\Big)^{1/p_2}.
\end{align*}
Set $\alpha = p_1(\gamma/n+1/ p_2)$. We have
\[
 \frac \alpha {p_1} = \frac{\gamma}{n} + \frac{1}{p_2} = \frac{1}{q_1} + \frac{1}{q_2} - \frac{1}{p_1} = \frac{1}{q_2}.
\]
Therefore, $\alpha q_2/q_1 = 1$. It follows that
\[
\|T_{\gamma}f\|_{L^{\vec q, \infty}}\approx (\ln N)^{1/q_2}
\mathrm{\,\,\,\, and\,\,\,\,}
  \|f\|_{L^{\vec p}}\approx (\ln N)^{1/p_2}.
\]
Hence
\[
    \lim_{N\rightarrow \infty} \frac{\|T_{\gamma}f\|_{L^{\vec q, \infty}}}{\|f\|_{L^{\vec p}}} = \infty.
\]

(ii).\,\, $p_2=q_2$ and $p_1>q_1$.

In this case, we have
$$\|T_{\gamma}f\|_{L^{\vec q, \infty}} \lesssim \|f\|_{L^{\vec p}}.$$

In fact,   there is some constant $C$ such that
\[
  \left\|(|x+y|+|x-y|)^{-\gamma}\right\|_{L_{x}^{n/\gamma, \infty}} \le C,\qquad \forall y.
\]
Now we see from H\"older's inequality that
$$\|T_{\gamma}f\|_{L_{x}^{q_1, \infty}} =\|f(x,y)(|x+y|+|x-y|)^{-\gamma} \|_{L_{x}^{q_1, \infty}} \lesssim \|f\|_{L_{x}^{p_1, \infty}}.$$
Hence
$$\|T_{\gamma}f\|_{L_{y}^{q_2} (L_{x}^{q_1, \infty} )}
\leq  \|f\|_{L_{y}^{p_2}(L_{x}^{p_1, \infty})}.$$
Therefore,
$$\|T_{\gamma}f\|_{L^{\vec q, \infty}} \lesssim \|f\|_{L^{\vec p}}.$$

However, (\ref{eq:fractional 1}) is  not true.
To see this, let
$$f(x,y)= (|x+y|+|x-y|)^{\gamma} \chi^{}_{E}(x,y),$$
where
$$E=\{(x,y):\, 0<|y|<1, |x| \leq |y|^{-\beta}-|y|\}, \qquad  \beta= \frac{q_1}{q_2}.$$
Then
\begin{align*}
\|T_{\gamma}f\|_{L^{\vec q, \infty}}
&=  \|\chi^{}_{E} \|_{L^{\vec r}} \\
&\gtrsim \Big(\int_{0<|y|<1/2}  \big( |y|^{-\beta}-1 \big)^{n q_2/q_1} dy\Big)^{1/q_2}  \\
&\gtrsim \Big(\int_{0<|y|<1/2}   |y|^{-\beta n q_2/q_1}   dy\Big)^{1/q_2}  \\
&=  \infty
\end{align*}
and
\begin{align*}
\|f\|_{L^{\vec p, \infty}}
&=\sup\limits_{\lambda>0} \lambda \int_{ |y|<1} \left(\left|\{x:\,  (|x+y|+|x-y|)^{\gamma} \chi^{}_{E}(x,y) >\lambda\}\right|^{p_2/p_1} dy\right)^{1/p_2} \\
&\le \sup\limits_{\lambda>0} \lambda \left(\int_{ |y|<1} \left|\left\{x:\, (|x| +|y|) \chi^{}_{E}(x,y) >2 ^{-1} \lambda^{1/\gamma} \right\}\right|^{p_2/p_1} dy\right)^{1/p_2} \\
 &\le \sup\limits_{\lambda>0} \lambda \left(\int_{|y|^{-\beta}>2^{-1}\lambda^{1/\gamma}} |y|^{-\beta n p_2/p_1} dy\right)^{1/p_2} \\
&\le C_{\vec p, n, \gamma} \sup\limits_{\lambda>0} \lambda \lambda^{n/(p_1 \gamma)-n/(\beta p_2 \gamma)}.
\end{align*}
Note
$$\beta=\frac{q_1}{q_2}= \frac{np_1}{np_2+p_1p_2 \gamma}.$$
So
$$\frac{n}{p_1 \gamma}-\frac{n}{\beta p_2 \gamma}=-1.$$
Thus we have $f\in L^{\vec p, \infty}(\bbR^n \times \bbR^n)$.
Therefore,  (\ref{eq:fractional 1}) is false is this case.
 \end{proof}

\medskip

Next we study the boundedness of $L_{\gamma}$.
    Observe that
    \[
       \sup\limits_{\alpha>0} \alpha | \{x:\,
            |x-y|^{-\gamma}>\alpha \} |^{\gamma/n} \leq C_{n,\gamma},\qquad \forall y.
    \]
    we have
    $$|x-y|^{-\gamma} \in L^{\infty}(L^{n/\gamma,\infty}).$$
    On the other hand, it is easy to see that
    $$|x-y|^{-\gamma} \notin L^{n/\gamma,\infty}(L^{\infty}).$$
    Therefore, by H\"older's inequality we have the following inequalities.

\begin{Theorem} \label{thm:weak strong norm fractional}
Let $f$ be a nonnegative measurable function  defined on $\mathbb{R}^{2n}$.
Then for all $0<r< p_1\leq \infty$ and $0<p_2\leq \infty$  satisfying the homogeneity condition $1/r =1/p_1 +\gamma/n$,
\begin{equation}\label{eq: half fractional}
\|L_{\gamma} f\|_{L^{p_2}(L^{r,\infty})}  \leq C_{\vec p,r,n} \|f\|_{L^{p_2}(L^{p_1,\infty})},
\end{equation}
\begin{equation}\label{eq: half fractional 1}
\|L_{\gamma} f\|_{L^{p_2, \infty}(L^{r,\infty})}  \leq C_{\vec p,r,n} \|f\|_{L^{p_2, \infty}(L^{p_1,\infty})}.
\end{equation}
And for all $0< p_1< r \leq \infty$ and $0<p_2\leq \infty$   satisfying the homogeneity condition $1/p_1 =1/r+\gamma/n $,
\begin{equation} \label{eq: half reverse fractional}
\| L_{\gamma}^{-1} f\|_{L^{p_2}(L^{r,\infty})}  \geq C_{\vec p,r,n} \|f\|_{L^{p_2}(L^{p_1,\infty})},
\end{equation}
\begin{equation} \label{eq: half reverse fractional 1}
\| L_{\gamma}^{-1} f\|_{L^{p_2, \infty}(L^{r,\infty})}  \geq C_{\vec p,r,n} \|f\|_{L^{p_2, \infty}(L^{p_1,\infty})}.
\end{equation}
However, for any multiple indices $\vec p$ and $\vec q$,
\begin{align}
  \|L_{\gamma} f\|_{L^{q_2,\infty}(L^{q_1})}   &  \not\leq C_{\vec p,\vec q,n} \|f\|_{L^{p_2,\infty}(L^{p_1})};
     \label{eq: false half fractional} \\
\|L_{\gamma} f\|_{L^{q_2}(L^{q_1,\infty})}   &  \not\leq C_{\vec p, \vec q,n} \|f\|_{L^{p_2}(L^{p_1, \infty})} \  unless  \, \, p_2=q_2;
     \label{eq: false half fractional b} \\
\|L_{\gamma} f\|_{L^{q_2, \infty}(L^{q_1,\infty})}   &  \not\leq C_{\vec p,\vec q,n} \|f\|_{L^{p_2, \infty}(L^{p_1, \infty})} \  unless  \, \, p_2=q_2;
 \label{eq: false half fractional c} \\
\|L_{\gamma} f\|_{L^{\vec q}}
  &\not\leq C_{\vec p,\vec q,n} \|f\|_{L^{\vec p}},  \label{eq: false half fractional d}
\end{align}

\end{Theorem}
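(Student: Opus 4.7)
The plan breaks into four stages. The forward bounds (\ref{eq: half fractional}) and (\ref{eq: half fractional 1}) rest on the uniform estimate $\sup_{y\in\bbR^n}\||\cdot-y|^{-\gamma}\|_{L^{n/\gamma,\infty}(\bbR^n)}\le C_{n,\gamma}$, which is the observation already recorded immediately before the theorem (and reduces by translation invariance to a polar-coordinate computation on $|x|^{-\gamma}$). Combining this with H\"older's inequality for weak $L^p$ spaces in the $x$-variable, together with the index relation $1/r=1/p_1+\gamma/n$, yields pointwise in $y$ the bound $\|L_\gamma f(\cdot,y)\|_{L^{r,\infty}_x}\le C_{\vec p,r,n}\|f(\cdot,y)\|_{L^{p_1,\infty}_x}$. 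Taking either the strong $L^{p_2}_y$ or the weak $L^{p_2,\infty}_y$ norm of both sides produces (\ref{eq: half fractional}) and (\ref{eq: half fractional 1}) respectively.

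The reverse inequalities (\ref{eq: half reverse fractional}) and (\ref{eq: half reverse fractional 1}) follow by inversion: since $L_\gamma\circ L_\gamma^{-1}=\mathrm{Id}$, setting $g:=L_\gamma^{-1}f$ and applying the forward bound with the roles of $p_1$ and $r$ exchanged (legitimate because the reverse hypothesis $1/p_1=1/r+\gamma/n$ is precisely the forward condition for the pair $(r,p_1)$) gives $\|f\|_{L^{p_2}(L^{p_1,\infty})}\le C\|L_\gamma^{-1}f\|_{L^{p_2}(L^{r,\infty})}$, and the weak-outer version is identical.

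For the failure of (\ref{eq: false half fractional}) and (\ref{eq: false half fractional d}), I would use $f(x,y)=g(y)\chi_{B(y,1)}(x)$ with $g$ arranged so that the right-hand side is finite. Then $L_\gamma f(\cdot,y)$ has $L^{q_1}_x$ norm a constant multiple of $|g(y)|\bigl(\int_{|z|<1}|z|^{-q_1\gamma}dz\bigr)^{1/q_1}$, and this diverges whenever $q_1\gamma\ge n$; within the homogeneity range one can always arrange this regime, refuting the bound. Alternatively, (\ref{eq: false half fractional d}) can be reduced directly to the negative part (\ref{eq: norm}) of Theorem~\ref{thm:weak norm}. For (\ref{eq: false half fractional b}) and (\ref{eq: false half fractional c}) with $p_2\ne q_2$, the obstruction is that $|x-y|^{-\gamma}$ provides no $y$-decay uniform in $x$, so the outer index cannot be improved from $p_2$ to a strictly larger $q_2$; a tensor-type test function $f(x,y)=h(x)\psi(y)$ with $h$ at the threshold of $L^{p_1,\infty}$ and $\psi$ tuned by a scaling argument exhibits the failure, in the spirit of Example~\ref{Ex: holder} and the negative parts of Theorem~\ref{thm:half weak norm}.

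The forward and reverse bounds are essentially one-line consequences of H\"older's inequality and the weak-$L^{n/\gamma}$ bound on the kernel, so the real work lies in the counterexamples. The hardest point will be the constructions in Stage~4: one must produce test functions separating the two sides for \emph{every} admissible pair $(\vec p,\vec q)$ with $p_2\ne q_2$ (not merely one such pair), and confirm via a scaling argument that no outer weakening whatsoever can compensate for the lack of $y$-decay of $|x-y|^{-\gamma}$.
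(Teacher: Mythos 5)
Your treatment of the four positive inequalities is correct and coincides with the paper's argument: the uniform bound $\|\,|\cdot - y|^{-\gamma}\|_{L^{n/\gamma,\infty}_x}\le C_{n,\gamma}$, H\"older's inequality in the inner variable, then $L^{p_2}_y$ or $L^{p_2,\infty}_y$ of both sides, and the reverse inequalities by writing $f=L_\gamma L_\gamma^{-1}f$ and applying the forward bound with $(r,p_1)$ swapped. That part is fine.

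The counterexample stage has genuine gaps. For (\ref{eq: false half fractional}), your test function $f(x,y)=g(y)\chi^{}_{B(y,1)}(x)$ gives $\|L_\gamma f(\cdot,y)\|_{L^{q_1}_x}=|g(y)|\bigl(\int_{|z|<1}|z|^{-\gamma q_1}dz\bigr)^{1/q_1}$, which is infinite only when $\gamma q_1\ge n$. Under the homogeneity relation $1/q_1+1/q_2=1/p_1+1/p_2+\gamma/n$ this can perfectly well fail (take $q_1$ large); so your construction does not cover all admissible pairs, contrary to the requirement you yourself flag. The paper instead takes $f(x,y)=|x-y|^{\gamma-n/q_1}\chi^{}_{\{|x|,|y|\le M\}}$, whose image $L_\gamma f$ is $|x-y|^{-n/q_1}$, critical for $L^{q_1}_x$ regardless of $\gamma$, while $\|f(\cdot,y)\|_{L^{p_1}_x}$ is finite when $p_2>q_2$ (since then $(\gamma-n/q_1)p_1>-n$), and a logarithmic modification $|x-y|^{\gamma-n/q_1}\bigl|\ln|x-y|\bigr|^{-1/q_1}$ is used when $p_2\le q_2$ (which forces $p_1>q_1$). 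For (\ref{eq: false half fractional b})–(\ref{eq: false half fractional c}), the paper's test function $f(x,y)=|x-y|^{-n/p_1}\chi^{}_{[0,1]}(|y|)$ is crucially not a tensor product: its $x$-dependence is through $x-y$, so both $\|f(\cdot,y)\|_{L^{p_1,\infty}_x}$ and $\|L_\gamma f(\cdot,y)\|_{L^{q_1,\infty}_x}$ are constant on the $y$-support, and the $\sup_\alpha$ equals $\sup_\alpha\alpha^{1-np_1/(nq_1+\gamma p_1q_1)}$, which is infinite unless $p_2=q_2$. A tensor $h(x)\psi(y)$ does not produce this scaling balance, because $\|h(\cdot)|\cdot-y|^{-\gamma}\|_{L^{q_1,\infty}_x}$ has nontrivial $y$-dependence. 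Finally, (\ref{eq: false half fractional d}) does not reduce to the negative part (\ref{eq: norm}) of Theorem~\ref{thm:weak norm}: that statement is about $T_\gamma$ with kernel $(|x+y|+|x-y|)^{-\gamma}$ and $\vec p=(\infty,\infty)$, while here the operator is $L_\gamma$ on general $\vec p$; the paper has to build separate test functions $|x-y|^{\gamma-n/q_1}$ localized to annular regions, split on the sign of $(\gamma-n/q_1)p_1+n$, with an extra subcase when equality holds. In short, the positive direction is right, but the counterexamples you sketch either do not cover all required parameter ranges or rest on a reduction that is not available.
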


\begin{proof}
(i). First, we prove (\ref{eq: half fractional})-(\ref{eq: half reverse fractional 1}).
As before, the condition $1/r =1/p_1 +\gamma/n$ follows from the homogeneity.
Let $h(x,y)=|x-y|^{-\gamma}$.
    Observe that
    \[
       \sup\limits_{\alpha>0} \alpha | \{x:\,
            |x-y|^{-\gamma}>\alpha \} |^{\gamma/n} \leq C_{n,\gamma},\qquad \forall y.
    \]
we have
$h \in L^{\infty}(L^{n/\gamma,\infty})$.

For $0<r<p_1<\infty$ with $1/r=1/p_1+\gamma/n$, we see from H\"older's inequality that
\begin{equation}\label{eq:separate holder}
\| L_{\gamma} f(\cdot,y)\|_{L_{x}^{r,\infty}} \lesssim_{p_1,r,n}  \|f(\cdot,y)\|_{L_{x}^{p_1,\infty}}  \|h(\cdot,y)\|_{L_{x}^{n/\gamma, \infty}}.
\end{equation}
And for $p_1=\infty$,
$\| L_{\gamma} f(\cdot,y)\|_{L_{x}^{n/\gamma,\infty}} \leq \|f(\cdot,y)\|_{L_{x}^{\infty}}
   \|h(\cdot,y)\|_{L_{x}^{n/\gamma, \infty}}$.
It follows that for all $0<p_2\leq \infty$,
\begin{align*}
\Big\|  \| L_{\gamma} f\|_{L_{x}^{r,\infty}}  \Big\|_{L_{y}^{p_2}}
&= \Big\|    \| L_{\gamma} f\|_{L_{x}^{r,\infty}}  \Big\|_{L_{y}^{p_2}} \\
&\lesssim_{p_1, r, n} \Big\|    \|f\|_{L_{x}^{p_1,\infty}}  \|h\|_{L_{x}^{n/\gamma, \infty}}    \Big\|_{L_{y}^{p_2}} \\
&\leq \Big\|    \|f\|_{L_{x}^{p_1,\infty}}  \Big\|_{L_{y}^{p_2}}  \Big\|    \|h\|_{L_{x}^{n/\gamma,\infty}}  \Big\|_{L_{y}^{\infty}}.
\end{align*}
That is,
$$\|L_{\gamma} f\|_{L^{p_2}(L^{r,\infty})}  \leq C_{\vec p, r, n}
  \|f\|_{L^{p_2}(L^{p_1,\infty})} \|h\|_{L^{\infty}(L^{n/\gamma,\infty})}.$$
Meanwhile, by H\"older's inequality we also obtain
$$\|L_{\gamma} f\|_{L^{p_2, \infty}(L^{r,\infty})}  \leq C_{\vec p, r, n}
 \|f\|_{L^{p_2, \infty}(L^{p_1,\infty})} \|h\|_{L^{\infty}(L^{n/\gamma,\infty})}.$$

Furthermore, applying the similar arguments as  in the proof of (\ref{eq:reverse fractional}) we get
their reverse versions (\ref{eq: half reverse fractional}) and (\ref{eq: half reverse fractional 1}).

(ii).\,\, Next we prove (\ref{eq: false half fractional}).

First, we assume that $p_2>q_2$.
Let $M>0$ and consider
$$f(x,y)=|x-y|^{\gamma-n/q_1} \chi_{\{(x,y):\, |x|, |y| \leq M\}}.$$
Below we calculate
$$\|f\|_{L_{x}^{p_1}}= \Big(\int_{\{  |x| \leq M\}} |x-y|^{(\gamma -n/q_1)p_1}dx \Big)^{1/p_1}.$$
For $\gamma  >n/q_1$, we have
$$\Big(\int_{\{  |x| \leq M\}} |x-y|^{(\gamma -n/q_1)p_1} dx \Big)^{1/p_1} \leq C_{n, \gamma, \vec p} M^{\gamma}.$$
And for $0< \gamma \le n/q_1$, since $p_2>q_2$, we have $n/q_1 - \gamma < n/p_1$. It follows that
\begin{align*}
&\hskip -10mm\Big(\int_{\{ |x| \leq M\}}   |x-y|^{(\gamma -n/q_1)p_1} dx\Big)^{1/p_1} \\
&\leq \Big( \sum_{2^{j}\leq M} \int_{\{x:\, 2^{j}<|x-y|\leq 2^{j+1} \}}
     |x-y|^{(\gamma -n/q_1)p_1} dx\Big)^{1/p_1} \\
&\leq C_{n, \gamma, p_2} \Big(\sum_{2^{j}\leq M}  \frac{2^{jn}}{2^{j (n/q_1 - \gamma )p_1  }} \Big)^{1/p_1} <\infty.
\end{align*}
Hence $\|f\|_{L^{p_1,\infty}(L^{p_2})}<\infty$.

On the other hand, observe that
$$\|L_{\gamma} f\|_{L_{x}^{q_1}} = \Big(\int_{\{  |x| \leq M\}} \frac{1}{|x-y|^{n}} dx \Big)^{1/q_1}
 =\infty.$$
We have  $\|L_{\gamma} f\|_{L^{q_2,\infty}(L^{q_1})} = \infty$.
This  proves  (\ref{eq: false half fractional}) for $p_2>q_2$.

For the case of $p_2 \leq q_2$, consider
\[
  f(x,y) = |x-y|^{\gamma-n/q_1} \Big|\ln |x-y| \Big|^{-1/q_1}  \chi_{\{(x,y):\, |x|, |y| \leq 1/3\}}.
\]
Observe that $p_1>q_1$. We have
\[
  \|L_{\gamma} f\|_{L_{x}^{q_1}} = \Big(\int_{\{  |x| \leq 1/3\}} \frac{1}{|x-y|^{n} \left|\ln |x-y|\right|} dx \Big)^{1/q_1}
 =\infty,
\]
while
\[
  \|  f\|_{L_{x}^{p_1}} = \Big(\int_{\{  |x| \leq 1/3\}} \frac{1}{|x-y|^{n} \left|\ln |x-y|\right|^{p_1/q_1}} dx \Big)^{1/p_1} \le C < \infty,\quad |y|<1/3.
\]
Again, we get  $\|L_{\gamma} f\|_{L^{q_2,\infty}(L^{q_1})} = \infty$ and $\| f\|_{L^{p_2,\infty}(L^{p_1})}<\infty$.

(iii).\,\, We use the same counterexample to prove (\ref{eq: false half fractional b}) and (\ref{eq: false half fractional c}).
From  the homogeneity it suffices to consider the case
$$\frac{1}{q_1}+\frac{1}{q_2}=\frac{1}{p_1}+\frac{1}{p_2}+\frac{\gamma}{n}.$$
Let $f(x,y)=|x-y|^{-n/p_{1}} \chi_{[0,1]}(|y|)$, then
\begin{align*}
\|f\|_{L_{x}^{p_1, \infty}}
&=\sup\limits_{\alpha>0} \alpha |\{x: |x-y|^{-n/p_{1}}\chi_{[0,1]}(|y|) >\alpha\}|^{1/p_1} \\
&=\sup\limits_{\alpha>0} \alpha |\{x: |x-y|<(1/\alpha)^{p_1/n}\}|^{1/p_1}\chi_{[0,1]}(|y|) \\
&=C_{p_1, n}\chi_{[0,1]}(|y|) .
\end{align*}
On the other hand, we have
\begin{align*}
 \|L_{\gamma} f\|_{L_{x}^{q_1, \infty}}
&=\sup\limits_{\alpha>0} \alpha |\{x: |x-y|^{-n/p_{1}} |x-y|^{-\gamma}>\alpha\}|^{1/q_1} \chi_{[0,1]}(|y|)\\
&=\sup\limits_{\alpha>0} \alpha (1/\alpha)^{n p_1/(n q_1+\gamma p_1 q_1)}\chi_{[0,1]}(|y|) \\
&=\sup\limits_{\alpha>0} \alpha^{1-n p_1/(n q_1+\gamma p_1 q_1)}=\infty,   \qquad |y|<1,
\end{align*}
unless
$$n p_1/(n q_1+\gamma p_1 q_1)=1, \, \mathrm{i.e.}, \, p_2=q_2.$$

(iv).\,\, Finally, we prove (\ref{eq: false half fractional d}).
First, we see from  the homogeneity that
it suffices to consider the case
$$\frac{1}{q_1}+\frac{1}{q_2}=\frac{1}{p_1}+\frac{1}{p_2}+\frac{\gamma}{n}.$$
There are two cases.

(a) $(\gamma-n/q_{1})p_1>-n$.

In this case, set
$$f(x,y)=|x-y|^{\gamma-n/q_{1}} \chi_{\{(x,y):\, |y| \leq |x| \leq N, 1 \leq |y| \leq 2 \}}, \quad N>10.$$

If $(\gamma-n/q_{1})p_1 \geq 0$, then
\begin{align*}
\|f\|_{L^{\vec p}}
&=\left(\int_{\{y:\, 1 \leq |y| \leq 2\}} \Big( \int_{\{x:\, |y| \leq |x| \leq N\}} |x-y|^{(\gamma-n/q_{1})p_{1}} dx \Big)^{p_2/p_1} dy \right)^{1/p_{2}} \\
&\leq C_{\vec p, \vec q, n} \left(\int_{\{y:\, 1 \leq |y| \leq 2\}} \Big( N^{(\gamma-n/q_{1})p_{1}+n}\Big)^{p_2/p_1} dy  \right)^{1/p_{2}} \\
&\leq C_{\vec p, \vec q, n} N^{n/q_2-n/p_2},
\end{align*}
while
$$\|L_{\gamma} f  \|_{L^{\vec q}}
=\left(\int_{\{  1 \leq |y| \leq 2\}} \Big( \int_{\{x:\, |y| \leq |x| \leq N\}} |x-y|^{-n} dx \Big)^{q_2/q_1} dy \right)^{1/q_{2}}=\infty.$$

If $-n<(\gamma-n/q_{1})p_1<0$, then
\begin{align*}
\|f\|_{L^{\vec p}}
&=\left(\int_{\{  1 \leq |y| \leq 2\}} \Big( \int_{\{x:\, |y| \leq |x| \leq N\}} |x-y|^{(\gamma-n/q_{1})p_{1}} dx\Big)^{p_2/p_1} dy \right)^{1/p_{2}} \\
&\leq \left(\!\int_{\{  1 \leq |y| \leq 2\}} \!\Big(\! \sum\limits_{2^{j}\leq N}
  \int_{\{x:\, 2^j \leq |x-y| \leq 2^{j+1} \}} \!\! |x-y|^{(\gamma-n/q_{1})p_{1}} dx \Big)^{p_2/p_1}\! dy \! \right)^{1/p_{2}}  \\
&\leq C_{\vec p, \vec q, n} \left(\int_{\{  1 \leq |y| \leq 2\}}  \Big( \sum\limits_{2^{j}\leq N} 2^{jn+j(\gamma-n/q_{1})p_1} \Big)^{p_2/p_1} dy \right)^{1/p_{2}}  \\
&< \infty.
\end{align*}
On the other hand,
$$\|L_{\gamma} f  \|_{L^{\vec q}}
=\left(\int_{1 \leq |y| \leq 2}\Big( \int_{|y| \leq |x| \leq N} |x-y|^{-n} dx\Big)^{q_2/q_1} dy \right)^{1/q_{2}}=\infty.$$

(b) $(\gamma-n/q_{1})p_1\le -n$.

In this case, we consider the function
$$f(x,y)=|x-y|^{\gamma-n/q_{1}} \chi_{\{(x,y):\, 2|y| \leq |x| \leq N, 1 \leq |y| \leq 2 \}}, \quad N>10.$$

If $(\gamma-n/q_{1})p_1<-n$,
it is not hard to see
\begin{align*}
\|f\|_{L^{\vec p}}
&=\left(\int_{\{y:\, 1 \leq |y| \leq 2\}} \Big( \int_{\{x:\, 2|y| \leq |x| \leq N\}} |x-y|^{(\gamma-n/q_{1})p_{1}} dx \Big)^{p_2/p_1} dy \right)^{1/p_{2}} \\
&\leq \left(\int_{\{y:\, 1 \leq |y| \leq 2\}} \Big( \int_{\{x:\, 2|y| \leq |x| \leq N\}} (|x|-|y|)^{(\gamma-n/q_{1})p_{1}} dx \Big)^{p_2/p_1} dy \right)^{1/p_{2}} \\
&\lesssim_{\vec p, \vec q, n} \left(\int_{\{y:\, 1 \leq |y| \leq 2\}} \Big( \int_{\{x:\, 2|y| \leq |x| \leq N\}} |x|^{(\gamma-n/q_{1})p_{1}} dx \Big)^{p_2/p_1} dy \right)^{1/p_{2}} \\
&=  \left(\int_{\{y:\, 1 \leq |y| \leq 2\}}  \Big( (2|y|)^{(\gamma-n/q_{1})p_{1}+n}
        -N^{(\gamma-n/q_{1})p_{1}+n} \Big)^{p_2/p_1}  dy \right)^{1/p_{2}} \\
&\le C_{\vec p, \vec q, n}.
\end{align*}
However,
\begin{align*}
\|L_{\gamma} f  \|_{L^{\vec q}}
&=\left(\int_{\{y:\, 1 \leq |y| \leq 2\}}\Big( \int_{\{x:\, 2|y| \leq |x| \leq N\}} |x-y|^{-n} dx\Big)^{q_2/q_1} dy \right)^{1/q_{2}}\\
&\geq \left(\int_{\{y:\, 1 \leq |y| \leq 2\}}\Big( \int_{\{x:\, 2|y| \leq |x| \leq N\}} (|x|+|y|)^{-n} dx\Big)^{q_2/q_1} dy \right)^{1/q_{2}}\\
&\geq C_{\vec p, \vec q, n} \left(\int_{\{y:\, 1 \leq |y| \leq 2\}}\Big( \int_{\{x:\, 2|y| \leq |x| \leq N\}} |x|^{-n} dx\Big)^{q_2/q_1} dy \right)^{1/q_{2}}\\
&=C_{\vec p, \vec q, n} \left(\int_{\{y:\, 1 \leq |y| \leq 2\}} \Big(\mathrm{ln} N-\mathrm{ln} (2|y|) \Big)^{q_2/q_1} dy \right)^{1/q_{2}}\\
&\geq C_{\vec p, \vec q, n} (\mathrm{ln} N)^{1/q_{1}}.
\end{align*}
Letting $N \to \infty$ we get a contradiction.

If $(\gamma-n/q_{1})p_1=-n$, then
$p_2=q_2$ and $p_1> q_1$.
We have already obtained
\begin{align*}
\|L_{\gamma} f  \|_{L^{\vec q}}
&=\left(\int_{\{y:\, 1 \leq |y| \leq 2\}}\Big( \int_{\{x:\, 2|y| \leq |x| \leq N\}} |x-y|^{-n} dx\Big)^{q_2/q_1} dy \right)^{1/q_{2}}\\
&\geq C_{\vec p, \vec q, n} (\mathrm{ln} N)^{1/q_{1}}.
\end{align*}
On the other hand, in this case
\begin{align*}
\|f\|_{L^{\vec p}}
&=\left(\int_{\{y:\, 1 \leq |y| \leq 2\}} \Big( \int_{\{x:\, 2|y| \leq |x| \leq N\}} |x-y|^{(\gamma-n/q_{1})p_{1}} dx \Big)^{p_2/p_1} dy \right)^{1/p_{2}} \\
&=\left(\int_{\{y:\, 1 \leq |y| \leq 2\}} \Big( \int_{\{x:\, 2|y| \leq |x| \leq N\}} |x-y|^{-n} dx \Big)^{p_2/p_1} dy \right)^{1/p_{2}} \\
&\leq \left(\int_{\{y:\, 1 \leq |y| \leq 2\}} \Big( \int_{\{x:\, 2|y| \leq |x| \leq N\}} (|x|-|y|)^{-n} dx \Big)^{p_2/p_1} dy \right)^{1/p_{2}} \\
&\leq C_{\vec p, \vec q, n} \left(\int_{\{y:\, 1 \leq |y| \leq 2\}} \Big( \int_{\{x:\, 2|y| \leq |x| \leq N\}} |x|^{-n} dx \Big)^{p_2/p_1} dy \right)^{1/p_{2}} \\
&= C_{\vec p, \vec q, n} \left(\int_{\{y:\, 1 \leq |y| \leq 2\}}  \Big(\mathrm{ln} N-\mathrm{ln} (2|y|) \Big)^{p_2/p_1} dy \right)^{1/p_{2}}\\
&\leq C_{\vec p, \vec q, n} (\mathrm{ln} N)^{1/p_{1}}.
\end{align*}
Note $p_1> q_1$. By letting   $N \to \infty$,   we get a  contradiction.
\end{proof}

At the end of this paper, we show that  Theorem~\ref{thm:weak weak norm fractional} implies the classical
    Hardy-Littlewood-Sobolev inequality and its reverse version as follows.

\begin{Corollary}\label{cor:H-L-S inequality}
    For $1< p_1, p_2< \infty$ with  $1/p_1 +1/p_2   > 1$,
    \begin{equation}\label{eq: separate fractional}
    \int_{\mathbb{R}^{n}}  \int_{\mathbb{R}^{n}}  f(x) g(y) |x-y|^{-n(2-1/p_1 -1/p_2  )} dxdy \leq C_{\vec p,n}  \|f\|_{L^{p_1}} \|g\|_{L^{p_2}}
    \end{equation}
    holds for all nonnegative functions $f\in L^{p_1}$, $g\in L^{p_2}$.

    For $0< p_1, p_2< 1$ and all nonnegative functions  $f\in L^{p_1}$, $g\in L^{p_2}$,
    \begin{equation}\label{eq: reverse separate fractional}
    \int_{\mathbb{R}^{n}}  \int_{\mathbb{R}^{n}}  f(x) g(y) |x-y|^{n(1/p_1 +1/p_2  -2)} dxdy \geq C_{\vec p,n}  \|f\|_{L^{p_1}} \|g\|_{L^{p_2}}.
    \end{equation}
\end{Corollary}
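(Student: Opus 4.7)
The derivation specializes Theorem~\ref{thm:weak weak norm fractional} to the tensor product $F(x,y)=f(x)g(y)$, then translates between the kernels $|x-y|^{\pm\gamma}$ and $(|x+y|+|x-y|)^{\pm\gamma}$ via a symmetrization argument in the spirit of Corollary~\ref{Co:Chen}. For the forward HLS, set $\gamma=n(2-1/p_1-1/p_2)\in(0,n)$, and first reduce to $f=f^*$ and $g=g^*$ symmetric decreasing via the Riesz rearrangement inequality, which gives $\int\int f(x)g(y)|x-y|^{-\gamma}dxdy\le\int\int f^*(x)g^*(y)|x-y|^{-\gamma}dxdy$ while preserving $\|f\|_{p_i}=\|f^*\|_{p_i}$.

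Next, apply Theorem~\ref{thm:weak weak norm fractional}(i) to $f^*\otimes g^*$ with $q_1=q_2=1$, whose homogeneity condition $1/q_1+1/q_2=1/p_1+1/p_2+\gamma/n=2$ matches the HLS scaling exactly. Combined with the factorization $\|f^*\otimes g^*\|_{L^{p_2,\infty}(L^{p_1,\infty})}=\|f^*\|_{p_1,\infty}\|g^*\|_{p_2,\infty}\le\|f\|_{p_1}\|g\|_{p_2}$, this produces an iterated weak-$L^1$ estimate on $T_\gamma(f^*\otimes g^*)$. Varying $(q_1,q_2)$ across the admissible range and invoking the interpolation results of Proposition~\ref{prop: interpolation} and Theorem~\ref{thm:interpolation} upgrades this to the strong bound
\[
\int\int f^*(x)g^*(y)(|x+y|+|x-y|)^{-\gamma}\,dxdy\le C_{\vec p,n}\|f\|_{p_1}\|g\|_{p_2}.
\]
To pass to the $|x-y|^{-\gamma}$ kernel, the symmetric decreasing structure is crucial: the substitution $y\mapsto -y$ gives $\int f^*g^*|x-y|^{-\gamma}dxdy=\int f^*g^*|x+y|^{-\gamma}dxdy$, and in the rotated frame $(u,v)=(x+y,x-y)$ the factor $f^*((u+v)/2)g^*((u-v)/2)$ is radially concentrated near $u=v=0$, so a direct polar-coordinate computation (analogous to the final rearrangement step in the proof of Corollary~\ref{Co:Chen}) establishes $\int f^*g^*|x-y|^{-\gamma}dxdy\asymp\int f^*g^*(|x+y|+|x-y|)^{-\gamma}dxdy$, which together with the displayed bound yields the forward HLS.

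For the reverse HLS with $0<p_1,p_2<1$ and $\sigma=n(1/p_1+1/p_2-2)>0$, the same strategy applies, using Theorem~\ref{thm:weak weak norm fractional}(ii) in place of (i) and the pointwise inequality $|x-y|^{\sigma}+|x+y|^{\sigma}\ge 2^{-\sigma}(|x+y|+|x-y|)^{\sigma}$ (which is a consequence of $\max(a,b)\ge(a+b)/2$), whose direction is favorable for extracting the required lower bounds on $\int f^*g^*|x-y|^{\sigma}dxdy$ from the reverse theorem.

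The principal obstacle is the twofold passage: first from the iterated weak-norm estimate of Theorem~\ref{thm:weak weak norm fractional} to the strong $L^1(\bbR^{2n})$ integral appearing in HLS, which requires interpolating across the one-parameter family of admissible index pairs $(q_1,q_2)$; and second from the kernel $(|x+y|+|x-y|)^{\pm\gamma}$ to $|x-y|^{\pm\gamma}$, which is not a pointwise comparison but only an integrated one, tractable after reducing to symmetric decreasing functions and carrying out a polar-coordinate computation.
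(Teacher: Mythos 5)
Your proposal diverges from the paper in a way that introduces two genuine gaps, both concentrated in the forward inequality (\ref{eq: separate fractional}).

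\textbf{The paper does not prove the forward inequality directly.} It proves only the reverse inequality (\ref{eq: reverse separate fractional}) and then invokes the equivalence of forward and reverse HLS from Gressman~\cite{Gressman2011}. This is not a cosmetic choice: for the reverse direction the needed embedding is $\|\cdot\|_{L^1(L^1)}\ge\|\cdot\|_{L^{1,\infty}(L^{1,\infty})}$, which holds, whereas for the forward direction one would need the opposite inequality, which is false. Your plan to attack the forward inequality head-on therefore inherits the wrong-way embedding and cannot conclude from the $q_1=q_2=1$ case alone.

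\textbf{The interpolation step is miscalibrated.} You propose to upgrade the iterated weak estimate to a strong $L^1(L^1)$ estimate by ``varying $(q_1,q_2)$ across the admissible range,'' i.e.\ interpolating on the target indices. But every admissible pair satisfies $1/q_1+1/q_2=1/p_1+1/p_2+\gamma/n=2$, so they lie on a single line in the $(1/q_1,1/q_2)$-plane. The interpolation results you cite do not cover this collinear configuration: Theorem~\ref{thm:interpolation} explicitly gives a negative result for mixed weak spaces when $1/p_1+1/p_2=1/q_1+1/q_2$, and the unlabeled quadrilateral theorem for iterated weak norms requires an internal application of Proposition~\ref{prop: interpolation} in the outer variable with $p_2<r_2<q_2$ strict, which degenerates exactly when the four corners are collinear (it forces $p_2=q_2=r_2$). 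The paper instead interpolates on the \emph{source} side: it fixes $q_1=q_2=1$, obtains the lower bound $\|T^{-1}_{\gamma}(f\otimes g)\|_{L^1(L^1)}\ge C\|f\|_{L^{s_i,\infty}}\|g\|_{L^{t_i,\infty}}$ for two choices $(s_i,t_i)$ with $1/s_i+1/t_i=2+\gamma/n$, multiplies them with exponents $\theta,1-\theta$, and applies Proposition~\ref{prop: interpolation} to turn $\|f\|_{L^{s_1,\infty}}^{\theta}\|f\|_{L^{s_2,\infty}}^{1-\theta}$ into $\|f\|_{L^{p_1}}$. Note that even after restoring this source-side interpolation, it only produces a \emph{lower} bound on a strong norm, which is why it belongs in the proof of the reverse inequality, not the forward.

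\textbf{The kernel comparison for the forward direction is not established.} For the reverse, both you and the paper use the pointwise estimate $|x-y|^{\sigma}+|x+y|^{\sigma}\gtrsim(|x+y|+|x-y|)^{\sigma}$ (with $\sigma>0$), together with the symmetry $\int f^*g^*|x-y|^{\sigma}=\int f^*g^*|x+y|^{\sigma}$; this is fine. For the forward direction you need the integrated comparison
\[
\int f^*(x)g^*(y)|x-y|^{-\gamma}\,dxdy\lesssim\int f^*(x)g^*(y)(|x+y|+|x-y|)^{-\gamma}\,dxdy,
\]
which is not a pointwise inequality (pointwise the sign is reversed), and is not ``analogous to the final rearrangement step in the proof of Corollary~\ref{Co:Chen}'' since that step uses a \emph{positive} exponent. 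Your suggested justification (radial concentration near $u=v=0$ in the rotated frame, plus a polar computation) is not correct as stated---for $f^*=g^*=\chi_{B(0,R)}$ the integrand is spread over a large region---and a genuine proof would require a layer-cake reduction to pairs of balls and a case analysis that is nowhere indicated. The paper never needs this comparison because it only works with the reverse inequality.

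In short: the reverse half of your sketch is close to the paper's argument once the interpolation is moved from the target indices to the source indices, but the forward half has structural gaps and should instead be obtained from the reverse via the known equivalence.
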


\begin{proof}
   It is known that the Hardy-Littlewood-Sobolev inequality and the reverse Hardy-Littlewood-Sobolev inequality  are equivalent~\cite{Gressman2011}.
   It suffices to show one of them is true.
   Below show that   Theorem~\ref{thm:weak weak norm fractional} implies   (\ref{eq: reverse separate fractional}).

    Denote $\gamma= n/p_1 + n/p_2-2n$. It follows from Theorem~\ref{thm:weak weak norm fractional} that
    $$\|T^{-1}_{\gamma} (f\otimes g) \|_{L^{q_2,\infty}(L^{q_1,\infty})}  \geq C_{\vec p, \vec q, n} \|(f\otimes g) \|_{L^{p_2,\infty}(L^{p_1,\infty})}$$
    holds for all $0<p_1 \leq q_1\leq\infty$ and $0<p_2 \leq q_2\leq\infty$ satisfying $1/p_1+1/p_2=1/q_1+1/q_2 +\gamma/n$.
    Hence for all $0<p_1, p_2\le 1$ with $1/p_1+1/p_2=2+\gamma/n$,
    $$\|T^{-1}_{\gamma} (f\otimes g) \|_{L^1(L^{1})}\geq  \|T^{-1}_{\gamma} (f\otimes g) \|_{L^{1,\infty}(L^{1,\infty})}
    \geq C_{\vec p, n} \|  f\otimes g \|_{L^{p_2,\infty}(L^{p_1,\infty})}$$
    holds, which gives
    \begin{equation}\label{eq: reverse integral}
    \|T^{-1}_{\gamma} (f\otimes g) \|_{L^1(L^{1})} \geq C_{\vec p,n} \|f\|_{L^{p_1, \infty}} \|g\|_{L^{p_2, \infty}}.
    \end{equation}
    As said in (\ref{eq: norm interpolation}), by interpolation,
    we have for all $0< p_1 < r < p_2 \leq \infty$,
    \begin{equation}\label{eq: norm interpolation again}
    \|f\|_{r} \leq C_{\vec p,r} \|f\|_{p_1, \infty}^{\theta} \|f\|_{p_2, \infty}^{1-\theta},
    \end{equation}
    where
    $$\frac{1}{r}=\frac{\theta}{p_1}+\frac{1-\theta}{p_2}.$$
    It follows from (\ref{eq: reverse integral}) that for all $0< s_1, s_2, t_1, t_2 \leq 1$ satisfying
     $1/s_1+1/t_1= 1/s_2+1/t_2 =2+\gamma/n$,
    $$\|T^{-1}_{\gamma} (f\otimes g) \|_{L^1(L^{1})}\geq C_{\gamma, n} \|f\|_{L^{s_1, \infty}} \|g\|_{L^{t_1, \infty}} $$
    and
    $$\|T^{-1}_{\gamma} (f\otimes g) \|_{L^1(L^{1})}\geq C_{\gamma, n} \|f\|_{s_2, \infty} \|g\|_{L^{t_2, \infty}}.$$
    We choose $s_1, s_2, t_1, t_2 $ such that
    \[
      \frac{1}{p_1}  =\frac{\theta}{s_1}+ \frac{1-\theta}{s_2},  \quad  0<\theta<1.
     \]
    Then
       \[
      \frac{1}{p_2}  =2+ \frac{\gamma}{n}-  \frac{1}{p_1} =  \frac{\theta}{t_1}+ \frac{1-\theta}{t_2}.
     \]
     Therefore, we see from  (\ref{eq: norm interpolation again}) that
    \begin{align}
    \|T^{-1}_{\gamma} (f\otimes g) \|_{L^1(L^{1})}
    &=\int_{\mathbb{R}^{n}}  \int_{\mathbb{R}^{n}}  f(x) g(y) (|x+y|+|x-y|)^{n(1/p_1 +1/p_2  -2)} dxdy \nonumber \\
    &\hskip -10mm\geq C_{\vec p,n} \|f\|_{L^{s_1,\infty}}^{\theta}
    \|f\|_{L^{s_2, \infty}}^{1-\theta}
    \|g\|_{L^{t_1,\infty}}^{\theta}
    \|g\|_{L^{t_2, \infty}}^{1-\theta} \nonumber  \\
    &\hskip -10mm \geq  C_{\vec p,n}  \|f\|_{L^{p_1}} \|g\|_{L^{p_2}}. \label{eq:t1}
    \end{align}

    Let $f^*$ and $g^*$ be  the symmetric decreasing rearrangements of $f$ and $g$, respectively.
   We see   from the  rearrangement inequality \cite[Theorem 3.7]{Lieb2001} that
    \begin{align*}
    &\ \ \ \int_{\mathbb{R}^{n}}  \int_{\mathbb{R}^{n}}  f(x) g(y) |x-y|^{n(1/p_1 +1/p_2  -2)} dxdy \\
    &\geq \int_{\mathbb{R}^{n}}  \int_{\mathbb{R}^{n}}  f^{\ast}(x) g^{\ast}(y)|x-y|^{n(1/p_1 +1/p_2-2)} dxdy \\
    &=\frac{1}{2} \int_{\mathbb{R}^{n}}  \int_{\mathbb{R}^{n}} f^{\ast}(x) g^{\ast}(y) \big(|x-y|^{n(1/p_1 +1/p_2-2)}+ |x+y|^{n(1/p_1 +1/p_2-2)}\big) dxdy \\
    &\geq C_{\vec p, n}\int_{\mathbb{R}^{n}}  \int_{\mathbb{R}^{n}}   f^{\ast}(x) g^{\ast}(y) \big(|x+y|+|x-y|\big)^{n(1/p_1 +1/p_2-2)} dxdy \\
    &\ge C_{\vec p,n}
      \|f^*\|_{L^{p_1}} \|g^*\|_{L^{p_2}},
    \end{align*}
where we use (\ref{eq:t1}) in the last step.
This together with the fact that
    $$\|f\|_{L^{p_1}} =\|f^{*}\|_{L^{p_1}}\quad \mathrm{ and } \quad \|g\|_{L^{p_2}} =\|g^{*}\|_{L^{p_2}} $$
    gives (\ref{eq: reverse separate fractional}).
  \end{proof}


\end{document}